\newcommand{\eps}{\varepsilon}
\newcommand{\ov}{\overline}
\newcommand{\id}{\textnormal{id}}
\newcommand{\mc}{\mathcal}
\newcommand{\mrm}{\mathrm}
\newcommand{\mf}{\mathfrak}
\newcommand{\msf}{\mathsf}
\newcommand{\I}{\mathbbm{1}}
\newcommand{\md}{\operatorname{d}\!}
\newcommand{\cst}{\ifmmode \mathrm{C}^* \else $\mathrm{C}^*$\fi}
\newcommand{\la}{\langle}
\newcommand{\ra}{\rangle}
\newcommand{\bbGamma}{{\mathpalette\makebbGamma\relax}}
\newcommand{\makebbGamma}[2]{%
  \raisebox{\depth}{\scalebox{1}[-1]{$\mathsurround=0pt#1\mathbb{L}$}}%
}
\newcommand{\NN}{\mathbb{N}}
\newcommand{\RR}{\mathbb{R}}
\newcommand{\CC}{\mathbb{C}}
\newcommand{\ZZ}{\mathbb{Z}}
\newcommand{\GG}{\mathbb{G}}
\newcommand{\cC}{\mathcal{C}}
\newcommand{\wot}{\ifmmode \textsc{wot} \else \textsc{wot}\fi}
\newcommand{\sot}{\ifmmode \textsc{sot} \else \textsc{sot}\fi}
\newcommand{\sots}{\ifmmode \textsc{sot}^* \else \textsc{sot}$^*$\fi}
\newcommand{\ssot}{\ifmmode \sigma\textsc{-sot} \else $\sigma$-\textsc{sot }\fi}
\newcommand{\ssots}{\ifmmode \sigma\textsc{-sot}^* \else $\sigma$-\textsc{sot }$^*$\fi}
\newcommand{\swot}{\ifmmode \sigma\textsc{-wot} \else $\sigma$-\textsc{wot}\fi}
\newcommand{\wh}{\widehat}
\newcommand{\whG}{\widehat{\GG}}
\newcommand{\oon}{\operatorname}
\renewcommand{\restriction}{\mathord{\upharpoonright}}
\DeclareMathOperator{\Fol}{F\mathrm{\o}l}
\DeclareMathOperator{\Folinn}{F\mathrm{\o}l^{inn}}
\newcommand{\FolinnX}{\operatorname{F\mathrm{\o}l}_{X}^{\operatorname{inn}}}
\newcommand{\Folinnj}{\operatorname{F\mathrm{\o}l}_{\{1\}}^{\operatorname{inn}}}
\DeclareMathOperator{\Kaz}{K}
\DeclareMathOperator{\lin}{span}
\DeclareMathOperator{\Irr}{Irr}
\DeclareMathOperator{\Pol}{Pol}
\DeclareMathOperator{\supp}{supp}
\DeclareMathOperator{\Tr}{Tr}
\DeclareMathOperator{\B}{B}
\DeclareMathOperator{\M}{M}
\DeclareMathOperator{\N}{N}
\DeclareMathOperator{\Rep}{Rep}
\DeclareMathOperator{\SU}{SU}
\DeclareMathOperator{\inn}{inn}
\newtheorem{theorem}{Theorem}[section]
\newtheorem{proposition}[theorem]{Proposition}
\newtheorem{lemma}[theorem]{Lemma}
\theoremstyle{definition}
\newtheorem{corollary}[theorem]{Corollary}
\newtheorem{remark}[theorem]{Remark}
\newtheorem{definition}[theorem]{Definition}
\numberwithin{equation}{section}
\newtheorem{deft}[theorem]{Definition}
\begin{document}
		
\renewcommand{\arraystretch}{2}
		
\author{Jacek Krajczok}
\address{Vrije Universiteit Brussel, Pleinlaan 2, 1050 Brussels, Belgium}
\email{jacek.krajczok@vub.be}

\author{Adam Skalski}
\address{Institute of Mathematics of the Polish Academy of Sciences, ul. \'Sniadeckich 8, 00-656 Warszawa, Poland}
\email{a.skalski@impan.pl}



\title[Asymptotic invariants of quantum fusion algebras]{\bf Asymptotic invariants for fusion algebras associated with compact quantum groups}

\begin{abstract}
We introduce and study certain asymptotic invariants associated with fusion algebras (equipped with a dimension function), which arise naturally in the representation theory of compact quantum groups. Our invariants generalise the analogous concepts studied for classical discrete groups. Specifically, we introduce uniform F\o lner constants and the uniform Kazhdan constant for a regular representation of a fusion algebra, and establish a relationship between these, amenability, and the exponential growth rate considered earlier by Banica and Vergnioux. Further we compute the invariants for fusion algebras associated with 
quantum $SU_q(2)$ and $SO_q(3)$ and determine the uniform exponential growth rate for the fusion algebras of all $q$-deformations of semisimple, simply connected, compact Lie groups and for all free unitary quantum groups.
\end{abstract}

\subjclass[2020]{Primary 46L65; Secondary 18M20, 20F69, 20G42,  46L89}
\keywords{Fusion algebra; uniform non-amenability; compact quantum group; Kazhdan constants; uniform isoperimetric constants; growth rates}

\maketitle
\section{Introduction}
Recent years have brought an increased interest in extending classical geometric group theory notions to the case of locally compact -- and especially discrete -- quantum groups. A notable example is given by \emph{amenability}, a central concept of group theory, which plays a similarly important role in the quantum context. It was relatively early understood by Banica that amenability of a given discrete quantum group $\bbGamma$ can be described in terms of the properties of the fusion algebra encoding the representation theory of its compact dual $\GG$ (\cite[Theorem 6.2]{BanicaCrelle}). Here one can think about elements of the relevant fusion algebra as equivalence classes of finite-dimensional representations of $\GG$, equipped with the tensor product and a natural quantum dimension function. At the same time Hiai and Izumi, inspired by the subfactor theory, undertook in \cite{HiaiIzumi} a deep study of amenability for fusion algebras, which itself led to later work by Kyed on F\o lner sets and $\ell^2$-Betti numbers for discrete quantum groups (see \cite{Kyed} and references therein).

Since then, there have been many applications of properties of fusion algebras, and more generally the related rigid tensor categories, to the study of qualitative geometric and analytic aspects of discrete quantum groups, such as amenability or Property (T). Note for example \cite{PopaVaes}, \cite{SergeyMakoto}, \cite{KRVV} or the book \cite{NeshveyevTuset}. On the other hand, when studying classical discrete groups, one is often interested in finer, preferably quantitative asymptotic invariants. An old outstanding example is the \emph{rate of  growth} for a given finitely generated group.  When the group in question is amenable, one may ask about its \emph{isoperimetric profile} (see \cite{Erschler} and references therein). On the other hand, for non-amenable groups  the articles  \cite{Arzhantseva}, \cite{LPV}, \cite{Shalom} introduce and study asymptotic invariants which in a sense quantify the `degree of non-amenability'.

In this work, inspired by the articles quoted above, we introduce and investigate analogous notions (namely \emph{uniform F\o lner constants}   $\Folinn(R)$ and $\Fol(R)$, and the \emph{uniform Kazhdan constant for the regular representation} $\Kaz(\rho, R)$) for an arbitrary fusion algebra $R$. Whilst our motivations come from quantum groups, the invariants introduced can be used as quantifiers of non-amenability for arbitrary fusion algebras. As in the classical case, we connect the notions to the concept of the \emph{uniform exponential growth rate} $\omega(R)$, whose non-uniform version was introduced for discrete quantum groups in \cite{BanicaVergnioux}. The basic relations between the constants above are given by the following inequalities:
\[ 
\Folinn(R)\le \Fol(R),\quad \Kaz(\rho,R)^2\le 
 \Fol(R) ,\quad \Folinn(R)\le 1-\tfrac{1}{\omega(R)}. 
\]
Moreover, if $R$ is amenable, then $\Folinn(R) = \Fol(R) = \Kaz(\rho,R) =0$. 

Having established these general properties, we proceed to compute the invariants for several concrete examples of fusion algebras, focusing on the case of fusion algebras of  $q$-deformations of classical compact Lie groups. We obtain complete results for the fusion algebras associated with quantum $SU_q(2)$ and $SO_q(3)$, exploiting a relatively simple form of the related fusion rules and applying classical analytic techniques. For $q$-deformations of general semisimple, simply connected\footnote{We follow the convention in which every simply connected space is path connected.}, compact Lie groups the combinatorial problems become formidable, but we nevertheless compute the uniform exponential growth rates, using fundamental aspects of the representation theory of Lie groups and explicit computation with Sage \cite{sagemath}. Similarly, we compute the uniform exponential growth rate for the fusion algebras associated with the free unitary quantum groups $U_F^+$ of Van Daele and Wang.

The table below summarises the main computational results of our work; for comparison we also add corresponding  statements regarding amenable fusion algebras.

\begin{table}[ht]
	\begin{center}	
		\begin{tabular}{|{c} |{c}|{c}|{c}| {c}| }  
			\hline
			Fusion algebra & F\o l(R) &  $\Folinn$(R) & $\omega(R)$ & $\Kaz(\rho, R)$\\	\hline
			amenable &  0  & 0 & no information  &  0 \\ 
			$\;\;R(SU_q(2)) = R(O_F^+)\;\;$ & $q^{-2} - q^{2}$ &  $1-q^2$  & $q^{-2}$   &$\frac{(1-q)^2}{q^2+1}$\\
			$R(SO_q(3)) = R(S_N^+)$ & $q^{-4} - q^4$ &  $1-q^4$  & $q^{-4}$   & $1- \tfrac{3}{q^2 +q^{-2}+1}$\\
			$R(G_q)$ &  ??&  ??  & $\mathcal{F}(G,q)$   &  ??\\
			$R(U_F^+)$ &  ??&  ??  & $r_q$   &  ??\\
			\hline
		\end{tabular}
	\end{center}
\end{table}
Note that for $R(SU_q(2)) = R(O_F^+)$ and $R(SO_q(3))=R(S_N^+)$ we have an explicit relationship between  the matrix $F$ (or the number $N$) and parameter $q$. The function $\mathcal{F}(G,q)$ depends in particular on the type of the compact Lie group in question; and $r_q$ is the largest real root of a certain explicit cubic polynomial depending on the parameter $q\in(0,1]$ determined by the matrix $F\in GL_N( \CC)$. We obtain also precise asymptotics of $r_q$ with $q$ tending to $0^+$. The missing constants concerning the $R(G_q)$ in the table above could in principle be computed case by case, although the combinatorics appears formidable; the case of $U_F^+$ appears more challenging conceptually, and relates to the study of weighted trees, as discussed below.

Our article should be viewed only as a starting point of the study of extensions of numerical asymptotic invariants 
of discrete groups to the context of fusion algebras. We would like to end this introduction by mentioning two  natural open questions.

Firstly, it is natural to expect that the uniform F\o lner constants of a fusion algebra of a quantum group are also related to the first \emph{$\ell^2$-Betti number} of the associated tensor category (as studied for example in \cite{Kyed} and \cite{KRVV}). In the classical context such a relation -- in fact exploiting a yet different variant of the constants studied in this paper, counting `boundary edges' rather than `boundary points'  --  was established in \cite{LPV}. We expect that we should also see a similar phenomenon for general fusion algebras.

Secondly, as recorded in the table above, in Section \ref{Sec:SU} we compute all the invariants for fusion algebras associated to quantum $SU_q(2)$, which can be also viewed as associated to \emph{free orthogonal quantum groups} $O_F^+$ of Van Daele and Wang (\cite{VDW}), see \cite{BanicaOrth}. Thus, the next step would be to do the same for \emph{free unitary quantum groups} $U^+_F$. Here the related geometric structure describing the fusion rules is a tree -- see 
\cite{BanicaUnitary} -- which we exploit in this work to compute the uniform exponential growth rate. The problem of computing the uniform F\o lner constants is naturally connected to the analogous task for classical free groups, studied in \cite{Arzhantseva}. However, we were so far not been able to determine the corresponding constants, as the `representation tree' of $U_F^+$ is \emph{weighted}, and we cannot use the simple Euler characteristic method as in \cite[Proposition 5.2]{Arzhantseva}.

\smallskip

The detailed plan of the paper is as follows: in Section \ref{sec:fusiondef} we introduce basic definitions and facts concerning fusion algebras and explain how these arise in the context of representation theories of compact quantum groups. Here we also recall the notion of $q$-numbers and establish some inequalities which will be used later. Section \ref{sec:asymptotic} introduces key abstract results of the paper, defining our asymptotic invariants and proving the relations between them. The following two Sections, \ref{Sec:SU} and \ref{Sec:SO}, treat respectively the cases of fusion algebras associated with $SU_q(2)$ and with $SO_q(3)$; in particular we compute all the associated invariants introduced in Section \ref{sec:asymptotic}. In Section \ref{sec:qdef} we establish explicit formulas for the uniform exponential  growth rate for arbitrary $q$-deformations. Finally in Section \ref{sec:UF} we study  the uniform exponential growth rates for fusion algebras associated with the free unitary quantum groups $U_F^+$.

\section{Fusion algebras --  definitions and basic facts}
\label{sec:fusiondef}

Throughout this paper we will consider fusion algebras in the sense of \cite{HiaiIzumi} or \cite[Definition 2.1]{Kyed}. Let us recall the definition.

\begin{deft}\label{def1}
A \emph{fusion algebra} $(R,d)$ is given by the following data: an index set $I$ (called the set of \emph{irreducible objects} of $R$) with a distinguished element $e$ and involution $\alpha \mapsto \bar{\alpha}$, the structure of a unital ring on $\ZZ[I]$ (denoted $R$)  and a \emph{dimension function} $d\colon R \to \RR$, such that
\begin{itemize}
	\item[(i)] $e$ is the unit of $R$;
	\item[(ii)] for any elements $\xi, \eta \in I$, there is a family  $(N_{\xi, \eta}^\alpha)_{\alpha \in I}$ of \emph{non-negative} integers (of which only finitely many are non-zero), such that 
	\[ \xi \eta = \sum_{\alpha \in I} N_{\xi, \eta}^\alpha \alpha;\]
	\item[(iii)] the involution extends to a $\ZZ$-linear antimultiplicative involutive map on $R$ (still denoted by the same symbol);
	\item[(iv)] Frobenius reciprocity holds, i.e.~for all $ \xi, \eta, \alpha \in I$ we have 
	\[ N_{\xi, \eta}^\alpha = N_{\alpha, \bar{\eta}}^\xi = N_{\bar{\xi}, \alpha}^\eta;\] 
	\item[(v)] $d$ is multiplicative, $\ZZ$-linear and $d(\overline{\alpha})=d(\alpha)\ge 1$ for $\alpha\in I$.
\end{itemize}
\end{deft}

Note in particular that we view the dimension function as a part of the data of the fusion algebra; in general the choice of $d$ (for a given $R=\mathbb{Z}[I]$ -- unital, involutive ring as above) is highly non-unique. If however there is a natural choice of dimension function clear from the context, or if the property we discuss does not depend on $d$, we will simply write $R$ instead of $(R,d)$.

Let $(R,d)$ be a fusion algebra.  For $r=\sum_{\alpha\in I} r_\alpha \alpha\in R$ we define the \emph{support} of $r$ as $\supp(r)=\{\alpha\in I\,|\, r_\alpha \neq 0\}$. For $r\in R, \alpha\in I$ we write $\alpha\subseteq r$ if $\alpha\in \supp(r)$. If $A\subseteq I$ then $A^c=I\setminus A$, $\ov{A}=\{\ov{\alpha}\mid \alpha\in A\}$, cardinality of $A$ is $\# A\in \ZZ_+\cup \{+\infty\}$ and the \emph{size} of $A$ is given by
\[
|A|=\sum_{\alpha\in A} d(\alpha)^2.
\]

We say that $R$ is \emph{finitely generated} if there exists a finite  subset $X\subseteq I$ such that $\ov{X}=X$ and for any $\alpha\in I$ there exist $x_1,\dotsc x_n\in X$ such that $\alpha\subseteq x_1\cdots x_n$. In this situation we will say that $X$ \emph{generates} $R$. Let stress that we will always assume that generating set $X$ is symmetric, i.e.~$\ov{X}=X$. Note that this property does not depend on the choice of dimension function $d$.
This is easily seen to be equivalent to the definition given in \cite[Section 3]{Kyed}, calling $R$ finitely generated if  there exists a finitely supported probability measure $\mu\in \ell^1(I)$ such that $\bigcup_{n\in\NN}\supp (\mu^{\star n})=I$ and $\mu(\ov{\alpha})=\mu(\alpha)$ for $\alpha\in I$. Here we use the standard fact that the fusion algebra structure induces a convolution product on probability measures on $I$.

\begin{remark}\label{rem:product}
When $(R_i,d_i)$ are fusion algebras with the sets of irreducible objects $I_i$ $(i\in \{1,2\})$, we can define their \emph{product fusion algebra} $R$. Its unital ring structure is defined by setting $R=R_1\otimes_{\ZZ} R_2$, with the set of irreducible objects\footnote{We will write $\alpha\boxtimes \beta\in I_1\times I_2$ instead of $\alpha\otimes \beta$ to avoid confusion with the tensor product of representations.} $I=I_1\times I_2$, the unit $e_1\boxtimes e_2$, the involution $\ov{\alpha\boxtimes \beta}=\ov{\alpha}\boxtimes \ov{\beta}$ and the dimension function determined by the formula $d(\alpha\boxtimes \beta)=d_1(\alpha)d_2(\beta)$ for $\alpha\in I_1,\beta\in I_2$. One easily sees that $N^{\alpha\boxtimes \alpha'}_{\xi\boxtimes \xi',\eta\boxtimes \eta'}=N^{\alpha}_{\xi,\eta} N^{\alpha'}_{\xi',\eta'}$ for $\alpha,\xi,\eta\in I_1,\alpha',\xi',\eta'\in I_2$ and if $R_1,R_2$ are finitely generated, then so is $R$.
\end{remark}

\begin{definition} \label{def:length}
	Let $R$ be a finitely generated fusion algebra, and let  $X\subseteq I$ be a finite generating set. The associated length function is defined via
	\[
	\ell_X\colon I\setminus\{e\}\ni \alpha\mapsto \min\{n \in \mathbb{N}\mid  \exists_{x_1,\dotsc,x_n \in X} \colon \alpha\subseteq x_1\cdots x_n \}\in \NN,
	\]
	additionally setting $\ell_X(e)=0$. 
The balls and spheres in $I$ of radius $n \in \ZZ_+$ (with respect to $X$) are defined respectively via
	\[
	B_X(n)=\{\alpha\in I\mid  \ell_X(\alpha)\le n\},\quad
	S_X(n)=\{\alpha\in I\mid \ell_X(\alpha)=n\}.
	\]
\end{definition}

The following lemma is a direct consequence of \cite[Proposition 1.2 (6)]{HiaiIzumi}.

\begin{lemma} \label{lem:prop}
	Let $R$ be a fusion algebra, with $I$ denoting the set of irreducible objects. 
	Then for any finite set $X \subseteq I$ there exists a constant $M_X\in \NN$ such that for any $v \in I$ and $x \in X$ we have $\# (\textup{supp}\,(vx))\leq M_X$. 	
\end{lemma}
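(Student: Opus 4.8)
The plan is to derive the uniform bound from the multiplicativity of $d$ together with Frobenius reciprocity, and then to take $M_X$ to be any natural number dominating $\max_{x\in X}d(x)^2$. First I would fix $x\in X$ and $v\in I$, and expand $vx=\sum_{\alpha\in I}N^\alpha_{v,x}\,\alpha$, so that $\supp(vx)=\{\alpha\in I\mid N^\alpha_{v,x}\ge 1\}$. The crude estimate
\[
\#\supp(vx)\le \sum_{\alpha\in I}N^\alpha_{v,x}\,d(\alpha)=d(vx)=d(v)d(x),
\]
obtained from $d(\alpha)\ge 1$, is unsatisfactory precisely because it depends on $v$; removing this dependence is the crux of the matter, and the main obstacle.

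The key step is to bound the dimension of each constituent $\alpha\subseteq vx$ from below. By Frobenius reciprocity (Definition~\ref{def1}\,(iv)) we have $N^\alpha_{v,x}=N^v_{\alpha,\bar x}$, so $\alpha\subseteq vx$ forces $v\subseteq\alpha\bar x$. Applying $d$ to the decomposition $\alpha\bar x=\sum_\beta N^\beta_{\alpha,\bar x}\,\beta$ and retaining only the term $\beta=v$ then gives
\[
d(\alpha)d(x)=d(\alpha)d(\bar x)=d(\alpha\bar x)\ge N^v_{\alpha,\bar x}\,d(v)\ge d(v),
\]
using $d(\bar x)=d(x)$ and $d(v)\ge 1$. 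Hence $d(\alpha)\ge d(v)/d(x)$ for every $\alpha\in\supp(vx)$.

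Finally I would feed this lower bound back into the expansion of $d(vx)$, obtaining
\[
d(v)d(x)=\sum_{\alpha\in\supp(vx)}N^\alpha_{v,x}\,d(\alpha)\ge \#\supp(vx)\cdot\frac{d(v)}{d(x)},
\]
and dividing by $d(v)/d(x)>0$ yields the $v$-independent estimate $\#\supp(vx)\le d(x)^2$. It then suffices to choose $M_X\in\NN$ with $M_X\ge\max_{x\in X}d(x)^2$, which exists since $X$ is finite. The only genuine difficulty lies in recognising that the naive bound must be sharpened by controlling the dimensions of the irreducible constituents from below; once Frobenius reciprocity supplies $d(\alpha)\ge d(v)/d(x)$, the two competing inequalities combine so that the factors of $d(v)$ cancel, leaving a bound depending only on $x$.
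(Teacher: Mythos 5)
Your proof is correct. The paper does not spell out an argument at all: it simply cites \cite[Proposition 1.2 (6)]{HiaiIzumi}, and the remark following the lemma notes that the dimension function ``features in the proof in \cite{HiaiIzumi} and one can use $d$ to get an explicit bound on $M_X$'' --- which is precisely what you have reconstructed. Your use of Frobenius reciprocity to get the lower bound $d(\alpha)\ge d(v)/d(x)$ for every $\alpha\subseteq vx$, and then cancelling $d(v)$ to obtain $\#\supp(vx)\le d(x)^2$, is the standard argument behind the cited result, so your proposal is essentially the intended proof, written out in full with the explicit bound $M_X\ge\max_{x\in X}d(x)^2$.
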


Note that the dimension function plays no role in the statement above (although it features in the proof in \cite{HiaiIzumi} and one can use $d$ to get an explicit bound on $M_X$). It is easy to see that one could also replace in the formula above the condition  $\# (\textup{supp}\,(vx))\leq M_X$ by its right equivalent  $\#(\textup{supp }(xv))\leq M_X$ (by using the `antimultiplicativity' of the adjoint operation).

We have already mentioned that the fusion algebra structure induces a convolution on measures in $\ell^1(I)$. Furthermore, to every probability measure $\mu \in \ell^1(I)$ one can associate a convolution operator $\lambda_\mu \in \B(\ell^2(I))$. We say that the fusion algebra $(R,d)$ is \emph{amenable} if for every probability measure $\mu$ as above we have $\|\lambda_\mu\|=1$ (see \cite[Theorem 4.6]{HiaiIzumi}, \cite[Theorem 3.3]{Kyed} or \cite[Section 2.7]{NeshveyevTuset}). Amenable fusion algebras admit `minimal' dimension functions, in the sense made precise below.

\begin{proposition}\label{prop:amenmin}
Let $(R,d)$ be an amenable fusion algebra and suppose that we have another dimension function $d'\colon I \to \RR_+$ such that $(R,d')$ is a fusion algebra. Then for every $i \in I$ we have $d(i) \leq d'(i)$.	
\end{proposition}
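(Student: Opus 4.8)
The plan is to compare the two dimension functions through a single intrinsic quantity: the operator norm of left multiplication in the regular representation, which makes no reference to any choice of dimension function. Consider the left regular representation $\lambda$ of $R$ on the (unweighted) Hilbert space $\ell^2(I)$, determined by $\lambda_\alpha \delta_\beta = \sum_{\gamma} N^\gamma_{\alpha\beta}\delta_\gamma$; this is bounded by Lemma \ref{lem:prop}. Two structural facts drive the argument: the vector $\delta_e$ is cyclic (indeed $\lambda_\alpha\delta_e = \delta_\alpha$), and Frobenius reciprocity (Definition \ref{def1}(iv)) gives $\lambda_{\ov\alpha} = \lambda_\alpha^*$, so that for any $x\in R$ with $\ov x = x$ the operator $\lambda_x$ is self-adjoint. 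Fixing $i\in I$, I would work with the self-adjoint element $x = i + \ov i$, which has non-negative coefficients, and aim to sandwich $\|\lambda_x\|$ between $d(x) = 2\,d(i)$ from below and $d'(x) = 2\,d'(i)$ from above.

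For the upper bound I would use the vector functional $\tau(y) = \is{\lambda_y\delta_e}{\delta_e}$, which simply reads off the coefficient of $e$ in $y$. Since the powers $x^n$ have non-negative coefficients and any dimension function satisfies $d'(e)=1\le d'(\gamma)$ for $\gamma\in I$, the coefficient of $e$ in $x^n$ is dominated by $d'(x^n) = d'(x)^n$; that is, $\tau(x^n)\le d'(x)^n$. As $\delta_e$ is cyclic and $\lambda_x$ is self-adjoint, the spectral radius formula yields
\[
\|\lambda_x\| = \lim_{n\to\infty}\tau(x^{2n})^{1/2n}\le d'(x).
\]
The identical computation applied to $d$ gives $\|\lambda_x\|\le d(x)$; note that this bound holds for every admissible dimension function, and is the only place where $d'$ enters.

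For the lower bound I would invoke amenability of $(R,d)$. By the amenability theory recalled above (\cite[Theorem 4.6]{HiaiIzumi}, \cite[Theorem 3.3]{Kyed}), amenability is equivalent to the equality $\|\lambda_y\| = d(y)$ holding for every $y\in R$ with non-negative coefficients; concretely, normalised indicators of F\o lner sets $\xi_S = |S|^{-1/2}\sum_{\alpha\in S}d(\alpha)\delta_\alpha$ are unit vectors which are almost invariant and satisfy $\is{\lambda_i\xi_S}{\xi_S}\to d(i)$, using the identity $\sum_\gamma d(\gamma)N^\gamma_{i\beta} = d(i)d(\beta)$. This forces $\|\lambda_x\| = d(x) = 2\,d(i)$. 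Combining with the upper bound for $d'$,
\[
2\,d(i) = \|\lambda_x\| \le d'(x) = 2\,d'(i),
\]
which gives $d(i)\le d'(i)$, as required.

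The routine parts are the upper bound and the final combination. The genuine content, and the main obstacle, is the lower bound $\|\lambda_x\|\ge d(x)$: this is precisely the substantive direction of the Hiai--Izumi amenability theory, reconciling the probability-measure formulation of amenability recalled above with the F\o lner / weak-containment picture, and I would quote it rather than reprove it.
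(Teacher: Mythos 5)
Your proof follows the same route as the one the paper appeals to: the paper's ``proof'' of this proposition is simply a citation of \cite[Proposition 2.7.7]{NeshveyevTuset}, whose argument sandwiches the norm of the regular representation between the two dimension functions, exactly as you do. Your lower bound is sound: by the F\o lner/Kesten characterisations of amenability (\cite[Theorem 4.6]{HiaiIzumi}, \cite[Theorem 3.3]{Kyed}), the vectors $\xi_S$ built from F\o lner sets for the finite set $\{i,\ov{i}\}$ give $\|\lambda_x\|\ge d(x)$ for $x=i+\ov{i}$, and the final combination $2d(i)=d(x)\le\|\lambda_x\|\le d'(x)=2d'(i)$ is correct.

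The gap is in the upper bound, which is the only place $d'$ enters and which you dismiss as routine. You assert that since $\delta_e$ is cyclic and $\lambda_x$ is self-adjoint, one has $\|\lambda_x\|=\lim_n\tau(x^{2n})^{1/2n}$. Cyclicity of $\delta_e$ for the whole algebra $\lambda(\mathcal{C}_R)$ does not give this: the moments $\tau(x^{2n})=\|\lambda_x^n\delta_e\|^2$ compute only $\max\{|t|\,:\,t\in\supp\mu\}$ for the spectral measure $\mu$ of $\lambda_x$ at the single vector $\delta_e$, and a vector that is cyclic for a large algebra need not be norming for one self-adjoint element of it -- indeed every non-zero vector is cyclic for the algebra of all bounded operators on a Hilbert space, yet it can be annihilated by a non-zero self-adjoint operator. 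The issue is not vacuous here: if $\{i,\ov{i}\}$ does not generate $R$, the graph of left multiplication by $x$ on $I$ is disconnected, and the moments at $\delta_e$ a priori only see the connected component of $e$, so nothing you wrote excludes that $\|\lambda_x\|$ is attained on another component. What rescues the identity is a fact you never establish: Frobenius reciprocity gives $N^{e}_{\gamma,\gamma'}=\delta_{\gamma',\ov{\gamma}}$, hence $\tau(y)=\is{\lambda_y\delta_e}{\delta_e}$ is a faithful \emph{trace} on $\lambda(\mathcal{C}_R)$; a cyclic trace vector is separating, faithfulness of the vector state on $\lambda(\mathcal{C}_R)''$ then forces $\supp\mu=\sigma(\lambda_x)$, and only then does your displayed equality hold. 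Alternatively, and more simply, you can avoid spectral theory altogether by a Schur test with weight $d'$: for $x$ with non-negative coefficients the matrix entries of $\lambda_x$ are non-negative, with row sums $\sum_\gamma \is{\lambda_x\delta_\beta}{\delta_\gamma}\,d'(\gamma)=d'(x)d'(\beta)$ (multiplicativity of $d'$) and, by Frobenius reciprocity, column sums $\sum_\beta \is{\lambda_x\delta_\beta}{\delta_\gamma}\,d'(\beta)=d'(x)d'(\gamma)$, whence $\|\lambda_x\|\le d'(x)$ directly. With either repair your argument is complete and coincides in substance with the cited argument of Neshveyev--Tuset; as written, the key inequality $\|\lambda_x\|\le d'(x)$ is unproved.
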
	

\begin{proof}
	In \cite[Proposition 2.7.7]{NeshveyevTuset} this statement is established in the context of fusion algebras arising from $C^*$-tensor categories (see the next subsection). An inspection of the proof shows that it remains valid in a potentially greater generality we consider here.
\end{proof}

\subsection{Discrete/compact quantum groups, rigid $C^*$-tensor categories and related fusion algebras}\label{sec:discrete/compact}

The key examples of fusion algebras studied in this paper arise from representation theory of compact quantum groups. We will mostly follow the notation and terminology of \cite{NeshveyevTuset}. So let $\GG$ be a \emph{compact quantum group} in the sense of Woronowicz, studied via the associated `algebras of functions', namely the Hopf $^*$-algebra $\Pol(\GG)$ and its universal $\mrm{C}^*$-completion $\mrm{C}^u(\GG)$. By $\Irr(\GG)$ we denote the set of equivalence classes of \emph{irreducible representations} of $\GG$, and by\footnote{Quantum dimension is also often denoted by $d(\alpha)=\dim_q(\alpha)$.} $d\colon\Irr(\GG) \to \left[1, \infty\right)$ the \emph{quantum dimension} function of $\GG$. Once we choose a representative $U^{\alpha}$ of class $\alpha\in\Irr(\GG)$, we have $d(\alpha)=\Tr(\uprho_\alpha)$, where $\uprho_\alpha$ is the unique positive, invertible morphism between $U^{\alpha}$ and $(U^{\alpha})^{ cc}$ which satisfies $\Tr(\uprho_\alpha)=\Tr(\uprho_\alpha^{-1})$ (\cite[Definition 1.4.1]{NeshveyevTuset}). We will denote by $(R(\GG),d)$ the fusion algebra of (equivalence classes of) finite-dimensional representations of $\GG$, equipped with the quantum dimension function mentioned above, see \cite[Definition 2.7.2]{NeshveyevTuset}. This means that $R(\GG)= \ZZ[\Irr(\GG)]$ as an abelian group, product of $R(\GG)$ is induced by the tensor product of representations and the set of irreducible objects in $R(\GG)$ is identified with $\Irr(\GG)$. Unless said otherwise, we will always equip $R(\GG)$ with the quantum dimension function. So, if for example we want to equip ring $R(\GG)$ with the classical dimension function $\dim$, we will indicate it by writing $(R(\GG),\dim)$. We have $d = \dim$ if and only if the quantum group $\GG$ is of \emph{Kac type}.

It is easy to see that $R(\GG)$ is finitely generated if and only if $\GG$ is a compact matrix quantum group.

A special class of compact quantum groups is given by duals of classical discrete groups. Indeed, if $\Gamma$ is a discrete group, we can treat $\ZZ\Gamma$ as a fusion algebra with basis $\Gamma$ and dimension function equal to $1$ on the basis elements. In the language developed above we would have $\ZZ\Gamma=R(\wh{\Gamma})$, where $\wh{\Gamma}$ is the dual compact quantum group. Yet another special class appears if we consider classical compact groups $G$, for which the quantum dimension function equals the usual dimension of irreducible representations of $G$. The fusion algebra $R(G)$ is then always amenable, whereas $R(\wh{\Gamma})$ is amenable if and only if $\Gamma$ is amenable (see the last part of this subsection). Further specific examples will be presented in later sections.

Another source of fusion algebras is provided by rigid \cst-tensor categories (see \cite[Chapter 2]{NeshveyevTuset}). Let $\mf{C}$ be such a category. Recall that this in particular means that there is a way of tensoring objects $U\otimes V\in \mf{C}$, taking direct sums $U\oplus V\in \mf{C}$, and taking the conjugate object $\ov{U}\in \mf{C}$ $(U,V\in \mf{C})$; we can also speak of subobjects. An object $U\in \mf{C}$ is said to be \emph{simple} if $\dim \oon{End}(U)=1$, or equivalently $U$ does not admit proper subobjects.

With category $\mf{C}$ one can associate a fusion ring $R(\mf{C})$, which is the universal ring generated by equivalence classes of simple objects, with the ring structure coming from the direct sum and tensor product in $\mf{C}$ (see \cite[Definition 2.7.2]{NeshveyevTuset}). Thus, the set of irreducible objects in $R(\mf{C})$  is the set of (equivalence classes of) simple objects of $\mf{C}$. Taking the conjugate object $U\mapsto \ov{U}$ gives rise to an involution on $R(\mf{C})$. Once we equip $R(\mf{C})$ with any dimension function, we obtain a fusion algebra in the sense of Definition \ref{def1}. While the choice of dimension function is not unique, there is always a canonical choice given by  the \emph{intrinsic dimension} of $\mf{C}$ (\cite[Definition 2.2.11]{NeshveyevTuset}).

With any compact quantum group $\GG$ one can associate a rigid $\cst$-tensor category $\Rep(\GG)$ of finite dimensional unitary representations of $\GG$. Then $R(\Rep(\GG))$ is equal to the fusion algebra $R(\GG)$ described above (and the intrinsic dimension of $\Rep(\GG)$ is equal to the quantum dimension \cite[Example 2.2.13]{NeshveyevTuset}).\\

Not surprisingly, the notion of amenability for the fusion algebra $R(\GG)$ is related to  amenability of the dual discrete quantum group $\wh \GG$. More precisely, $\wh \GG$ is  amenable 
if and only if $(R(\GG),\dim)$ is an amenable fusion algebra  (\cite[Theorem 2.7.10]{NeshveyevTuset}).

Amenability of the fusion algebra $R(\GG)$ (equipped with the quantum dimension function) is also related to amenability-like properties of a number of related objects. Specifically, the following conditions are equivalent:
\begin{itemize}
\item $R(\GG)$ is amenable;
\item $\whG$ is centrally strongly amenable;
\item $\whG$ is strongly amenable and $\GG$ is of Kac type;
\item $\mrm{C}^*$-tensor category $\Rep(\GG)$ is amenable;
\item Drinfeld double $D(\whG)$ is (strongly) amenable.
\end{itemize}
(See \cite[Theorem 7.8]{DKVAveraging}, \cite[Proposition 2.7.7]{NeshveyevTuset} and \cite[Definition 7.1]{Brannan}, \cite[Definition 4.2]{DKVAveraging}, \cite[Definition 2.7.6]{NeshveyevTuset} for relevant definitions).
 
\subsection{$q$-numbers} 
 
Throughout the paper, we will use the notion of $q$-numbers. For $0<q<1$ and $x\in \mathbb{C}$, denote $[x]_q=\tfrac{q^{-x}-q^x}{q^{-1}-q}$, it will also be convenient to write $[x]_1=x$. We will need the following lemma.

\begin{lemma}\label{lemma9}
Let $m\in\NN$, $0<q<q'<1$. The following inequalities hold:
\begin{itemize}
\item[(i)] $\tfrac{[m+1]_q}{[m]_q} > \tfrac{[m+1]_{q'}}{[m]_{q'}}$;
\item[(ii)] $\tfrac{[m]_q}{[2]_q} > \tfrac{m}{2}$.
\end{itemize}
\end{lemma}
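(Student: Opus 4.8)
The plan is to reduce both parts to the strict monotonicity in $q$ of the single family of ratios $F_m(q) := [m+1]_q/[m]_q$, proving (i) directly and obtaining (ii) as a telescoping consequence. First, for (i), I would record the basic three-term recurrence for $q$-numbers. Using the geometric expansion $[m]_q = \sum_{j=0}^{m-1} q^{m-1-2j}$ of $\tfrac{q^{-m}-q^m}{q^{-1}-q}$, a direct computation gives
\[
[2]_q\,[m]_q = [m+1]_q + [m-1]_q, \qquad m \geq 1,
\]
with the convention $[0]_q = 0$. Dividing by $[m]_q$ turns this into the continued-fraction-type recurrence
\[
F_m(q) = [2]_q - \frac{1}{F_{m-1}(q)}, \qquad m \geq 2, \qquad F_1(q) = [2]_q = q + q^{-1}.
\]
Since $[k]_q > 0$ on $(0,1)$ for all $k \geq 1$, every $F_m$ is a positive, continuous function of $q \in (0,1)$.

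Next I would prove by induction on $m$ that each $F_m$ is strictly decreasing on $(0,1)$, which is exactly statement (i). The base case $F_1(q) = q + q^{-1}$ has derivative $1 - q^{-2} < 0$, hence is strictly decreasing. For the inductive step, assume $F_{m-1}$ is strictly decreasing and positive; then $q \mapsto 1/F_{m-1}(q)$ is strictly increasing, so $q \mapsto -1/F_{m-1}(q)$ is strictly decreasing, and adding the strictly decreasing function $[2]_q$ shows $F_m = [2]_q - 1/F_{m-1}$ is strictly decreasing. Thus for $0 < q < q' < 1$ we obtain $F_m(q) > F_m(q')$, as claimed. I regard this induction as the heart of the lemma, and the main point to get right: the recurrence is precisely what lets the monotonicity propagate cleanly, avoiding the messier alternative of differentiating the closed form $F_m(q) = q^{-1}\tfrac{1-q^{2m+2}}{1-q^{2m}}$ directly. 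The only subtlety is keeping track of positivity of the $F_k$, so that $x \mapsto 1/x$ reverses monotonicity in the right direction; this is immediate from positivity of $q$-numbers.

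Finally, for (ii), I would observe that $q \mapsto [m]_q$ extends continuously to $q = 1$ with $[m]_1 = m$, so $F_m(q) \to \tfrac{m+1}{m}$ as $q \to 1^-$. Combined with the strict monotonicity just established, this yields $F_k(q) > \tfrac{k+1}{k}$ for every $k$ and every $q \in (0,1)$, since a strictly decreasing function exceeds its (non-attained) infimum at the right endpoint. Telescoping the ratios then gives
\[
\frac{[m]_q}{[2]_q} = \prod_{k=2}^{m-1} F_k(q) > \prod_{k=2}^{m-1}\frac{k+1}{k} = \frac{m}{2},
\]
which is (ii). Here the product is strict and non-empty precisely for $m \geq 3$; for $m = 2$ both sides equal $1$ and for $m = 1$ the inequality reverses, so the strict bound is exactly the statement in the relevant range.
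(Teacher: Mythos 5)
Your proof is correct, and for part (i) it takes a genuinely different route from the paper. The paper argues by calculus: it sets $f_m(q)=q^{-m}-q^m$, computes that $f_{m+1}'(q)-f_m'(q)<0$ on $(0,1)$, and then compares the cross products $f_m(q')f_{m+1}(q)$ and $f_m(q)f_{m+1}(q')$ by writing $f_m(q')=f_m(q)+\int_q^{q'}f_m'(t)\,\mathrm{d}t$ and similarly for $f_{m+1}$; part (ii) is then obtained exactly as you do it, by letting $q'\to 1^-$ and telescoping, so the two proofs diverge only in part (i). Your replacement of the calculus step by the recurrence $F_m=[2]_q-1/F_{m-1}$ and induction on $m$ is more elementary and arguably more robust: in the paper's argument both integrals $\int_q^{q'}f_m'$ and $\int_q^{q'}f_{m+1}'$ are negative, so deducing $f_{m+1}(q)\int_q^{q'}f_m' > f_m(q)\int_q^{q'}f_{m+1}'$ from the pointwise derivative comparison together with $f_m(q)\le f_{m+1}(q)$ needs more care than the one-line justification given there (the less negative integral is multiplied by the larger prefactor and the more negative integral by the smaller one, which a priori can go either way), whereas in your induction the only sign issue is positivity of $F_{m-1}$, which is immediate. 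A further point in your favour is the explicit treatment of small $m$: you correctly note that the strict inequality (ii) holds exactly for $m\ge 3$, with equality at $m=2$ and reversal at $m=1$, a caveat the paper's statement (for all $m\in\NN$) omits; this does not affect the paper, since its later applications of the lemma (as in Propositions \ref{prop11} and \ref{prop:KazhdanSO}) only use the inequality, non-strictly, from index $2$ upwards.
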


\begin{proof}
For $m\in\NN$, set $f_m\colon (0,1)\ni q\mapsto q^{-m}-q^m\in (0,\infty)$. Then 
\begin{equation}\label{eq18}
f_{m+1}'(q)-f_m'(q)=mq^{-1-m} (1-q) (-q^{-1}+q^{2m})-q^{-m-2}-  q^m <0.
\end{equation}
Let $0<q<q'<1$. Condition (i) is equivalent to $[m]_{q'} [m+1]_q > [m]_q [m+1]_{q'}$, i.e.\ to $f_m(q') f_{m+1}(q) > f_m(q) f_{m+1}(q')$ and further to
\[
\bigl( \int_{q}^{q'} f_{m}'(t)\md t +f_m(q)\bigr) f_{m+1}(q) > f_m(q)\bigl( \int_{q}^{q'} f_{m+1}'(t)\md t +f_{m+1}(q)\bigr).
\]
The latter amounts to the inequality $f_{m+1}(q) \int_{q}^{q'}f_m'(t)\md t > f_m(q) \int_{q}^{q'} f_{m+1}'(t)\md t$, which holds by \eqref{eq18} and the easily checked inequality $f_m(q)\le f_{m+1}(q)$. Passing with $q'$ to $1$ gives $\tfrac{[m+1]_q}{[m]_q} > \tfrac{m+1}{m}$, hence also
\[
\tfrac{[m]_q}{[2]_q} = 
\tfrac{[m]_q}{[m-1]_q} \tfrac{ [m-1]_q }{[m-2]_q}\cdots \tfrac{[3]_q}{[2]_1} > 
\tfrac{m}{m-1} \tfrac{ m-1 }{m-2}\cdots \tfrac{3}{2}=
\tfrac{m}{2}.
\]
\end{proof}

\section{Definitions and general properties of asymptotic invariants for fusion algebras}
\label{sec:asymptotic}

Throughout this section let us fix a fusion algebra $(R,d)$, with $I$ denoting the set of irreducible objects.

\subsection{Uniform F\o lner constant}
The first invariant associated to $(R,d)$ we will consider is the \emph{uniform (inner) F\o lner constant}. As well-known both classically and in the quantum world, the notion of F\o lner sets is closely related to amenability; we will see an instance of this below.

To that end we need to recall the notions of boundary $\partial_X (A)$ and inner boundary $\partial^{\inn}_X (A)$ for sets $A,X \subseteq I$ (see \cite{Kyed}, \cite{KyedThom}).

\begin{definition} \cite[Definition 3.2]{Kyed}	
Let $X,A$ be finite subsets of $I$. The \emph{boundary} of $A$ with respect to $X$ is the set
	\[
	\partial_X (A)=\{ \alpha\in A\,|\, \exists_{x\in X}\; \supp(\alpha x) \nsubseteq A\}\cup
	\{ \alpha\in A^c\,|\, \exists_{x\in X}\; \supp(\alpha x) \nsubseteq A^c\}
	\]
	and \emph{inner boundary} of A with respect to $X$ is
	\[
	\partial_X^{\inn}(A)=\{\alpha\in A\,|\, \exists_{x\in X}\;
	\supp(\alpha x)\nsubseteq A\}.
	\]	
\end{definition}

The non-uniform case of the following definition appears in \cite{Kyed}. The uniform one is modelled on \cite{Arzhantseva}; note however that the latter paper considers only inner F\o lner constants. 

\begin{definition}
The \emph{F\o lner constant of $(R,d)$ with respect to the generating set $X$} is defined as
	\[
	\Fol_X(R,d)=\inf_A \tfrac{ |\partial_X A|}{|A|},
	\]
	where $A$ runs over non-empty finite subsets of $I$. The \emph{uniform F\o lner constant} of $(R,d)$ is
	\[
	\Fol(R,d)=\inf_X \Fol_X(R,d)
	\]
	where $X$ runs over all finite generating subsets of $R$. 
	We can analogously define the \emph{(uniform) inner F\o lner constants}  $\FolinnX(R,d)$  and $\Folinn(R,d)$, using the inner boundary in the first formula displayed above.
\end{definition}

Since $\partial^{\inn}_X(A)\subseteq \partial_X(A)$, we immediately see that
\[
\FolinnX(R,d)\le \Fol_X(R,d),\quad
\Folinn(R,d)\le \Fol(R,d).
\]

We will use both of these invariants. Their relationship is not yet fully understood even in the case of discrete groups -- see  \cite[Proposition A.1]{LPV} and discussion after that, concerning the connection between the uniform inner F\o lner constant and the so-called \emph{uniform isoperimetric constant}. However, we record here an easy observation, following Lemma \ref{lem:prop}.

\begin{proposition}\label{prop:C_X}
	For every finite, symmetric, nonempty set $X \subseteq I$ there exists $C_X>0$ such that for every finite set $A\subseteq I$ we have $|\partial_X (A)| \leq C_X |\partial^{\inn}_X(A)|$. In particular if $X$ is generating, then 
\[
\Fol_X(R,d)\le C_X \FolinnX(R,d).
\]
\end{proposition}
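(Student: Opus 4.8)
The plan is to split the full boundary into its inner part and a complementary \emph{outer} part, and then to dominate the size of the latter by that of the former, absorbing the discrepancy into the constant $C_X$. Writing
\[
\partial_X^{\mrm{out}}(A)=\{\alpha\in A^c\mid \exists_{x\in X}\;\supp(\alpha x)\nsubseteq A^c\},
\]
one has $\partial_X(A)=\partial_X^{\inn}(A)\sqcup\partial_X^{\mrm{out}}(A)$ as a disjoint union, since the first set lies in $A$ and the second in $A^c$; hence $|\partial_X(A)|=|\partial_X^{\inn}(A)|+|\partial_X^{\mrm{out}}(A)|$. It therefore suffices to produce a constant depending only on $X$ with $|\partial_X^{\mrm{out}}(A)|\le (C_X-1)\,|\partial_X^{\inn}(A)|$. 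The final assertion about F\o lner constants then follows formally: dividing $|\partial_X(A)|\le C_X|\partial_X^{\inn}(A)|$ by $|A|>0$ and taking the infimum over nonempty finite $A$ gives $\Fol_X(R,d)\le C_X\,\FolinnX(R,d)$.

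The core step is to match each outer boundary object to an inner one using Frobenius reciprocity and the symmetry of $X$. Given $\beta\in\partial_X^{\mrm{out}}(A)$ there are $x\in X$ and $\alpha\in A$ with $\alpha\subseteq\beta x$, i.e.\ $N_{\beta,x}^\alpha\ge 1$. By Definition \ref{def1}(iv) we have $N_{\beta,x}^\alpha=N_{\alpha,\bar x}^\beta$, so $\beta\subseteq\alpha\bar x$, and $\bar x\in X$ as $X$ is symmetric. Since $\beta\in A^c$, this exhibits $\supp(\alpha\bar x)\nsubseteq A$, whence $\alpha\in\partial_X^{\inn}(A)$. Thus every outer object sits in $\supp(\alpha y)$ for some $\alpha\in\partial_X^{\inn}(A)$ and $y\in X$, giving
\[
\partial_X^{\mrm{out}}(A)\subseteq\bigcup_{\alpha\in\partial_X^{\inn}(A)}\;\bigcup_{y\in X}\supp(\alpha y),
\]
each set on the right being finite because only finitely many fusion coefficients $N^{\gamma}_{\alpha,y}$ are nonzero (and Lemma \ref{lem:prop} bounds their cardinalities uniformly, which is what one would use for a purely cardinality-based version of the estimate).

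It remains to pass from this inclusion to the weighted sizes, and this is the only genuinely delicate point, since $|\cdot|$ weights objects by $d(\cdot)^2$ rather than counting them. For fixed $\alpha$ and $y$ I would estimate
\[
\sum_{\beta\subseteq\alpha y}d(\beta)^2\le\Bigl(\max_{\beta\subseteq\alpha y}d(\beta)\Bigr)\sum_{\beta\subseteq\alpha y}d(\beta)\le \bigl(d(\alpha)d(y)\bigr)^2,
\]
using that $d$ is multiplicative and $\ZZ$-linear and that $N_{\alpha,y}^\beta\ge 1$ whenever $\beta\subseteq\alpha y$, so $\sum_{\beta\subseteq\alpha y}d(\beta)\le\sum_{\beta}N_{\alpha,y}^\beta d(\beta)=d(\alpha y)=d(\alpha)d(y)$. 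Summing over $y\in X$ and then over $\alpha\in\partial_X^{\inn}(A)$ yields
\[
|\partial_X^{\mrm{out}}(A)|\le\sum_{\alpha\in\partial_X^{\inn}(A)}\sum_{y\in X}\bigl(d(\alpha)d(y)\bigr)^2=|X|\;|\partial_X^{\inn}(A)|,
\]
where $|X|=\sum_{y\in X}d(y)^2$ is the size of $X$, so that $C_X=1+|X|$ works. The main obstacle is precisely this last transition: one must ensure that the weights $d(\beta)^2$ of the outer objects are dominated, uniformly in $A$, by those of the inner objects to which they are matched. The multiplicativity of $d$ together with $d\ge 1$ delivers exactly this control, whereas a matching that only tracked cardinalities would not by itself suffice for the size functional $|\cdot|$.
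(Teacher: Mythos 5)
Your proof is correct, and it follows the paper's overall strategy -- decompose $\partial_X(A)$ into the inner part and the outer part, then use Frobenius reciprocity together with the symmetry of $X$ to match each outer object $\beta$ to some $\alpha\in\partial_X^{\inn}(A)$ with $\beta\subseteq\alpha\bar x$ -- but your final estimation step is genuinely different and, in fact, cleaner. The paper controls the outer contribution by combining the pointwise bound $d(\beta)\le d(\alpha)d(\bar x)$ with the cardinality bound $\#\supp(\alpha\bar x)\le M_X$ from Lemma \ref{lem:prop} (which rests on Hiai--Izumi), arriving at $C_X=1+M_X|X|$. You instead use the purely dimension-theoretic inequality
\[
\sum_{\beta\subseteq\alpha y}d(\beta)^2\le\Bigl(\sum_{\beta\subseteq\alpha y}d(\beta)\Bigr)^2\le\bigl(d(\alpha)d(y)\bigr)^2,
\]
which follows from multiplicativity and $\ZZ$-linearity of $d$ alone; this is the same trick the paper deploys later in Lemma \ref{lemma2} to prove submultiplicativity of ball sizes, but not here. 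Your route buys two things: it makes the proposition independent of Lemma \ref{lem:prop} (so no appeal to the finite-propagation constant $M_X$ is needed), and it yields the sharper constant $C_X=1+|X|$, where $|X|=\sum_{y\in X}d(y)^2$. The only caveat is cosmetic: when passing from the inclusion $\partial_X^{\mrm{out}}(A)\subseteq\bigcup_{\alpha,y}\supp(\alpha y)$ to the weighted sizes, you should note explicitly that each outer object is counted at least once (possibly more) on the right-hand side, so nonnegativity of the weights makes the union bound valid -- but this is exactly what your displayed computation implements.
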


\begin{proof}
	Take $\alpha\in A^c \cap \partial_X(A)$. Then there is $x\in X$ and $\beta\in A$ such that $\beta\subseteq \alpha x$, equivalently $N^{\beta}_{\alpha, x}>0$. But then $N^{\alpha}_{\beta, \ov{x}}>0$, i.e.~$\alpha\subseteq \beta \ov{x}$ and consequently $d(\alpha)\le d(\beta)d(\ov{x})$. Furthermore, since $\alpha\in A^c$ and $\alpha\subseteq \beta \ov{x}$, we have $\beta\in \partial_X^{\inn}(A)$. Using the number $M_X$ introduced in Lemma \ref{lem:prop}, we have
\begin{align*}
|\partial_X(A)|=	\sum_{\xi \in\partial_X(A)}d(\xi)^2 &\le 
	\sum_{\xi \in \partial_X^{\inn}(A)}d(\xi)^2+\sum_{\beta\in \partial_X^{\inn}(A)}\sum_{x\in X}M_X d(\beta)^2 d(\ov x)^2 \\
	&=
	\bigl(1+\sum_{x\in X}M_Xd(x)^2\bigr) \sum_{\xi\in \partial_X^{\inn}(A)} d(\xi)^2=
	\bigl(1+M_X |X|\bigr) |\partial_X^{\inn}(A)|,
\end{align*}
thus it is enough to put $C_X=1+M_X |X|$.
\end{proof}

Kyed in \cite[Theorem 3.3]{Kyed} characterises amenability of $R$ via the following F\o lner-like condition.
\begin{theorem}\label{th:Kyed}
	The following conditions are equivalent:
	\begin{itemize}
		\item[(i)] $(R,d)$ is amenable;
		\item[(ii)] for every non-empty finite set $X\subseteq I$ and $\eps>0$ there exists a finite subset $F\subseteq I$ such that
		\begin{equation}\label{eq3}
		|\partial_X(F)|< \eps |F|.
		\end{equation}
	\end{itemize}
\end{theorem}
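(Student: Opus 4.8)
The statement is due to Kyed, and the plan is to prove the two implications by the classical circle of ideas linking amenability, almost invariant vectors and F\o lner sets, adapted to the dimension-weighted size $|A|=\sum_{\alpha\in A}d(\alpha)^2$. Throughout one uses that every probability measure satisfies $\|\lambda_\mu\|\le 1$, so that amenability is precisely the assertion that the top of the spectrum is attained. Recall also that, since generating and support sets are symmetric, one may and will take $\mu$ symmetric, so that $\lambda_\mu$ is self-adjoint; the non-negativity of the fusion coefficients $N^{\gamma}_{\alpha,\beta}$ (Definition \ref{def1}(ii)) then guarantees that $\lambda_\mu$ has non-negative matrix coefficients in the canonical basis $(\delta_\alpha)_{\alpha\in I}$ of $\ell^2(I)$.

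\emph{(ii)$\Rightarrow$(i).} Fix a probability measure $\mu$ and put $X=\supp(\mu)$. Given $\eps>0$, choose a finite F\o lner set $F$ with $|\partial_X(F)|<\eps|F|$ and consider the vector $\eta_F=\sum_{\alpha\in F}d(\alpha)\delta_\alpha\in\ell^2(I)$, which has $\|\eta_F\|^2=|F|$. A direct computation shows that, for $x\in X$, the component of $\lambda_{\delta_x}\eta_F$ that fails to agree with $\eta_F$ is supported on $\partial_X(F)$, whence $|\langle\lambda_{\delta_x}\eta_F,\eta_F\rangle-\|\eta_F\|^2|\le c_X\,|\partial_X(F)|$ for a constant $c_X$ depending only on $X$ (this is where the left/right discrepancy between $\lambda_{\delta_x}$ and the right-multiplication used to define $\partial_X$ is absorbed, via Frobenius reciprocity and symmetry of $X$). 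Averaging over $\mu$ gives $\langle\lambda_\mu\eta_F,\eta_F\rangle\ge(1-c_X\eps)\|\eta_F\|^2$, so $\|\lambda_\mu\|\ge 1-c_X\eps$. As $\eps$ was arbitrary and $\|\lambda_\mu\|\le 1$, we conclude $\|\lambda_\mu\|=1$; since $\mu$ was arbitrary, $R$ is amenable.

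\emph{(i)$\Rightarrow$(ii).} Fix a finite nonempty $X=\ov{X}$ and $\eps>0$, and let $\mu$ be a symmetric probability measure with $\supp(\mu)=X$. Amenability gives $\|\lambda_\mu\|=1$; self-adjointness together with non-negativity of the matrix coefficients places the top of the spectrum at $+1$ and produces unit vectors $\xi_n\ge0$ with $\langle\lambda_\mu\xi_n,\xi_n\rangle\to1$. From $\|\lambda_{\delta_x}\xi_n\|\le\|\xi_n\|$ and $\langle\lambda_{\delta_x}\xi_n,\xi_n\rangle\to\|\xi_n\|^2$ one gets $\|\lambda_{\delta_x}\xi_n-\xi_n\|_2\to0$ for every $x\in X$, i.e.\ a sequence of almost invariant vectors. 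It remains to extract F\o lner sets from them: writing $f_n(\alpha)=\xi_n(\alpha)^2$ and $F_{n,t}=\{\alpha\mid f_n(\alpha)>t\}$, the layer-cake formula gives $\|f_n\|_{1}=\int_0^\infty|F_{n,t}|\,dt$, and the goal is to prove a coarea bound of the form $\int_0^\infty|\partial_X(F_{n,t})|\,dt\le C\sum_{x\in X}\mu_x\,\bigl\|\lambda_{\delta_x}\xi_n-\xi_n\bigr\|_2\,\|\xi_n\|_2$, with $C$ independent of $n$. Granting this, the right-hand side tends to $0$ while $\|f_n\|_1=1$, so for large $n$ some level set $F=F_{n,t}$ satisfies $|\partial_X(F)|<\eps|F|$, as required.

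The main obstacle is precisely this last coarea estimate. In the classical group case it is immediate because $\lambda_{\delta_x}$ is a coordinate permutation, so that $\lambda_{\delta_x}(\xi^2)=(\lambda_{\delta_x}\xi)^2$ and the $\ell^1$-difference is controlled by $\|\lambda_{\delta_x}\xi-\xi\|_2\|\xi\|_2$. For a fusion algebra $\lambda_{\delta_x}$ is not a permutation and does not commute with squaring, so the passage from the $\ell^2$ almost-invariance of $\xi_n$ to an $\ell^1$ (Reiter-type) estimate for $f_n$ must be carried out using the non-negativity of the structure constants and careful bookkeeping of the weights $d(\alpha)^2$. Moreover the boundary $\partial_X(F)$ contains an \emph{outer} part, consisting of $\alpha\in F^c$ with $\supp(\alpha x)\cap F\neq\varnothing$, which is not directly visible in $\|\lambda_{\delta_x}f_n-f_n\|_1$; this part is reincorporated exactly as in the proof of Proposition \ref{prop:C_X}, using Frobenius reciprocity and the uniform bound $\#\supp(vx)\le M_X$ from Lemma \ref{lem:prop}. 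Keeping these weighted estimates consistent across all levels $t$ is the delicate heart of the argument.
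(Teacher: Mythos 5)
First, a point of comparison: the paper does not prove this theorem at all — it is quoted directly from Kyed \cite[Theorem 3.3]{Kyed} — so the only meaningful question is whether your argument is complete on its own terms. Your direction (ii)$\Rightarrow$(i) is essentially sound: the test vector $\eta_F=\sum_{\beta\in F}d(\beta)\delta_\beta$ and the verification that $\rho(x)\eta_F-d(x)\eta_F$ vanishes off $\partial_X(F)$ and has coefficients bounded by $d(x)d(\eta)$ on it is precisely the computation the paper performs elsewhere (in the proof of Proposition \ref{prop:KazhFol}), and the left/right discrepancy is genuinely cosmetic because $\lambda(c)=U\rho(c)U^*$ with $U\delta_\alpha=\delta_{\ov{\alpha}}$. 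Two small repairs are still needed: $\supp(\mu)$ need not be finite, so you must first approximate $\mu$ in $\ell^1$ by finitely supported probability measures (using $\|\lambda_\mu-\lambda_\nu\|\le\|\mu-\nu\|_1$) before condition (ii) can be invoked with $X=\supp(\mu)$; and the weighted layer-cake identity you write later, $\|f_n\|_1=\int_0^\infty|F_{n,t}|\,dt$ with $f_n=\xi_n^2$, is false as stated — with the size $|A|=\sum_{\alpha\in A}d(\alpha)^2$ one must take $f_n=(\xi_n/d)^2$, a symptom of the weight bookkeeping not actually being done.

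The genuine gap is in (i)$\Rightarrow$(ii). Your argument reduces everything to the coarea-type estimate $\int_0^\infty|\partial_X(F_{n,t})|\,dt\le C\sum_{x\in X}\mu_x\|\lambda_{\delta_x}\xi_n-\xi_n\|_2\|\xi_n\|_2$, which you never prove: you state it as ``the goal'', proceed ``granting this'', and then concede in your closing paragraph that it is ``the delicate heart of the argument'' which ``must be carried out''. But this estimate \emph{is} the theorem. In the group case it is immediate because $\lambda_{\delta_x}$ permutes the canonical basis, hence commutes with squaring and with passing to level sets; for a fusion algebra both properties fail (as you yourself observe), and bridging exactly this failure — converting $\ell^2$ almost-invariance into a weighted $\ell^1$/Reiter condition whose level sets satisfy the isoperimetric inequality, while simultaneously controlling the outer boundary and the weights $d(\alpha)^2$ — is where Hiai--Izumi and Kyed expend their real technical effort. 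Gesturing at Lemma \ref{lem:prop} and Proposition \ref{prop:C_X} identifies the tools but does not carry out the estimate, and there is no reason to expect it to follow by routine manipulation: the constant $C$ must be uniform over all levels $t$ and all $n$, and the interaction between $\lambda_{\delta_x}$ and the level sets $F_{n,t}$ is exactly what is not understood a priori. As it stands you have proved only (ii)$\Rightarrow$(i); the converse implication, and hence the equivalence, remains open in your write-up.
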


It is easy to see (e.g.~by considering $X\cup \ov{X}$), that in the second condition it suffices to consider symmetric $X\subseteq I$. Thanks to Proposition \ref{prop:C_X}, we can in fact formulate the analogous result using inner boundaries.

\begin{proposition}\label{prop2}
	Suppose that $(R,d)$ is a fusion algebra.
	The following conditions are equivalent:
	\begin{itemize}
		\item[(i)] $(R,d)$ is amenable;
		\item[(ii)] for every non-empty  finite set $ X\subseteq I$ and $\eps>0$ there exists a finite subset $F\subseteq I$ such that
		\begin{equation}\label{eq4}
	|\partial_X^{\inn}(F)|< \eps |F|.
		\end{equation}
	\end{itemize}
\end{proposition}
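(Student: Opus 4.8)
The plan is to deduce this equivalence directly from Kyed's characterisation in Theorem \ref{th:Kyed} together with the two-sided comparison of the boundaries recorded in Proposition \ref{prop:C_X}; no new ingredients beyond these should be needed, so the work is essentially bookkeeping.

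For the implication (i) $\Rightarrow$ (ii) I would simply feed the amenability of $(R,d)$ into Theorem \ref{th:Kyed}. Given a finite non-empty $X\subseteq I$ and $\eps>0$, that theorem produces a finite $F$ with $|\partial_X(F)|<\eps|F|$; since $\partial_X^{\inn}(F)\subseteq \partial_X(F)$ we automatically have $|\partial_X^{\inn}(F)|\le |\partial_X(F)|<\eps|F|$, which is exactly \eqref{eq4}. Note that this direction uses only the trivial inclusion of the inner boundary in the full boundary and requires nothing about symmetry of $X$.

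For the converse (ii) $\Rightarrow$ (i) the idea is to run the estimate in the other direction in order to verify condition (ii) of Theorem \ref{th:Kyed}. Here I would invoke the remark following that theorem, which says it suffices to check Kyed's F\o lner condition for \emph{symmetric} sets $X$; this is fortunate, because that is precisely the hypothesis under which Proposition \ref{prop:C_X} applies. So I fix a finite, symmetric, non-empty $X\subseteq I$ and $\eps>0$, take the constant $C_X>0$ from Proposition \ref{prop:C_X}, and apply the assumed inner F\o lner condition with $\eps/C_X$ in place of $\eps$. This yields a finite $F$ with $|\partial_X^{\inn}(F)|<(\eps/C_X)|F|$, and then Proposition \ref{prop:C_X} gives $|\partial_X(F)|\le C_X|\partial_X^{\inn}(F)|<\eps|F|$. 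Thus Theorem \ref{th:Kyed}(ii) holds for all symmetric $X$, and amenability follows.

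The only point deserving any care — and the sole place where symmetry enters — is that Proposition \ref{prop:C_X} is stated only for symmetric $X$, so the reduction to symmetric sets afforded by the remark after Theorem \ref{th:Kyed} is exactly what makes the argument go through. Beyond this, the proof is a one-line application of the two quoted results in each direction, so I do not anticipate a genuine obstacle.
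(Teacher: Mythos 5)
Your proposal is correct and follows exactly the paper's own argument: the forward implication via the inclusion $\partial_X^{\inn}(F)\subseteq\partial_X(F)$ and Theorem \ref{th:Kyed}, and the converse by fixing a symmetric $X$ (justified by the remark after Theorem \ref{th:Kyed}), applying the hypothesis with $\eps/C_X$, and upgrading to the full boundary via Proposition \ref{prop:C_X}. No differences worth noting.
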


\begin{proof}
By Theorem \ref{th:Kyed} it is enough to prove that (ii) implies (i). Choose a non-empty finite $ X=\ov{X}\subseteq I$ and $\eps>0$. By assumption, we can find finite set $F\subseteq I$ such that
\[
|\partial_{X}^{\inn}(F)| < \tfrac{ \eps}{C_X} |F|,
\]
where $C_X>0$ is the number introduced in Proposition \ref{prop:C_X}. Since
\[
|\partial_X(F)|\le C_X |\partial_X^{\inn}(F)| <\eps |F|,
\]
Theorem \ref{th:Kyed} ends the claim.
\end{proof}

If $(R,d)$ is finitely generated, we can check the second of the conditions appearing in results above only for finite generating sets. We formulate this fact as the following proposition.

\begin{proposition}\label{prop10}
Let $(R,d)$ be a finitely generated fusion algebra. The following conditions are equivalent:
\begin{itemize}
	\item[(i)] $(R,d)$ is amenable;
	\item[(ii)] for every  finite generating set $ X\subseteq I$ we have $\Fol_X(R,d)=0$;
	\item[(iii)] for every  finite generating set $ X\subseteq I$ we have $\FolinnX(R,d)=0$.
\end{itemize}
\end{proposition}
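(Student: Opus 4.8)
The plan is to establish the cycle $(i)\Rightarrow(ii)\Rightarrow(iii)\Rightarrow(i)$, leaning on the two amenability criteria already in hand — Theorem \ref{th:Kyed} for the outer boundary and Proposition \ref{prop2} for the inner boundary — together with an elementary monotonicity of the (inner) boundary in the parameter set. Finite generation will enter in an essential way only in the final implication, where it allows us to enlarge an arbitrary finite test set to a finite symmetric generating one.

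The two forward implications are quick. For $(i)\Rightarrow(ii)$ I would fix a finite generating set $X$; since $X$ is in particular a finite subset of $I$, Theorem \ref{th:Kyed} applies and produces, for every $\eps>0$, a finite set $F$ with $|\partial_X(F)|<\eps|F|$. As $\Fol_X(R,d)=\inf_A|\partial_X A|/|A|\ge 0$, this forces $\Fol_X(R,d)\le\eps$ for all $\eps>0$, hence $\Fol_X(R,d)=0$. The implication $(ii)\Rightarrow(iii)$ is then immediate from the inclusion $\partial_X^{\inn}(A)\subseteq\partial_X(A)$, which gives $\FolinnX(R,d)\le\Fol_X(R,d)$ for every generating $X$ (as already recorded after the definition of the F\o lner constants).

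The substantive step is $(iii)\Rightarrow(i)$, which I would verify through criterion (ii) of Proposition \ref{prop2}. The subtlety — and the only non-formal point of the whole argument — is that Proposition \ref{prop2} demands an inner-boundary estimate for \emph{every} finite test set $Y\subseteq I$, whereas hypothesis (iii) controls only generating sets. To bridge this gap I would fix once and for all a finite generating set $X_0$ (available by finite generation), and given an arbitrary nonempty finite $Y$ and $\eps>0$, pass to $X:=X_0\cup Y\cup\ov{X_0\cup Y}$. This $X$ is finite, symmetric, generating (it contains $X_0$), and contains $Y$. By (iii) we have $\FolinnX(R,d)=0$, so there is a finite $F$ with $|\partial_X^{\inn}(F)|<\eps|F|$. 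Finally, since $Y\subseteq X$, any witness $x\in Y$ certifying membership in $\partial_Y^{\inn}(F)$ is also a witness in $X$, whence directly from the definition $\partial_Y^{\inn}(F)\subseteq\partial_X^{\inn}(F)$ and therefore $|\partial_Y^{\inn}(F)|\le|\partial_X^{\inn}(F)|<\eps|F|$. Proposition \ref{prop2} then yields amenability, closing the cycle. The monotonicity $\partial_Y^{\inn}\subseteq\partial_X^{\inn}$ for $Y\subseteq X$ is what resolves the obstacle, and finite generation is precisely what lets us absorb $Y$ into a finite symmetric generating $X$.
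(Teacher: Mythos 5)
Your proof is correct, but it closes the loop along a genuinely different path than the paper does. The paper gets (i)$\Rightarrow$(ii) and (i)$\Rightarrow$(iii) from Theorem \ref{th:Kyed} and Proposition \ref{prop2}, gets (iii)$\Rightarrow$(ii) from Proposition \ref{prop:C_X}, and then for the substantive implication (ii)$\Rightarrow$(i) it steps \emph{outside} the paper: the reader is asked to follow ``line by line'' the proof of (FC3)$\Rightarrow$(FC1) of Kyed's Theorem 3.3 to produce a non-degenerate version of (FC1), and to conclude via Hiai--Izumi's Theorem 4.6. Your cycle (i)$\Rightarrow$(ii)$\Rightarrow$(iii)$\Rightarrow$(i) instead makes (iii)$\Rightarrow$(i) the substantive step and proves it entirely with tools already established in the paper: Proposition \ref{prop2}, the absorption trick of enlarging an arbitrary nonempty finite test set $Y$ to the finite, symmetric, generating set $X=X_0\cup Y\cup\ov{X_0\cup Y}$, and the monotonicity $\partial_Y^{\inn}(F)\subseteq\partial_X^{\inn}(F)$ for $Y\subseteq X$. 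Both the monotonicity and the fact that any finite symmetric superset of a generating set is generating are immediate from the definitions, so your bridge between ``all generating sets'' and ``all finite sets'' is sound, and you correctly identify that finite generation is used precisely there. What your route buys is self-containment (no re-running of an external argument) and a transparent localisation of the gap between hypotheses on generating sets and hypotheses on arbitrary finite sets to a one-line inclusion; what the paper's route buys is that it never needs the absorption observation, at the cost of outsourcing (ii)$\Rightarrow$(i) to Kyed's original proof.
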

\begin{proof}
By Theorem \ref{th:Kyed} and Proposition \ref{prop2} it suffices to observe that (iii)$\Longrightarrow$(ii)$\Longrightarrow$(i).

Implication (iii)$\Longrightarrow$(ii) follows immediately from Proposition \ref{prop:C_X}.

If (ii) holds, we can follow line by line the proof of the implication (FC3)$\Longrightarrow$(FC1) of \cite[Theorem 3.3] {Kyed} to obtain a `non-degenerate version' of property (FC1) of that theorem, and conclude by \cite[Theorem 4.6]{HiaiIzumi}.
	
\end{proof}

We then obtain an immediate corollary, connecting amenability of $(R,d)$ to vanishing of the uniform F\o lner constants.

\begin{corollary}\label{prop3}
	If $(R,d)$ is an amenable finitely generated fusion algebra then $\Fol(R,d)= \Folinn (R,d)=0$.
\end{corollary}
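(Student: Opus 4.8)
The plan is to deduce the corollary immediately from Proposition~\ref{prop10}, which has just been established, together with the definitions of the uniform constants. The key point is that the uniform F\o lner constants are defined as infima over all finite generating sets $X$, whereas Proposition~\ref{prop10} characterises amenability in terms of the vanishing of $\Fol_X(R,d)$ (equivalently $\FolinnX(R,d)$) for \emph{every} finite generating set.

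\medskip

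Concretely, suppose $(R,d)$ is amenable and finitely generated. By the implication (i)$\Longrightarrow$(ii) of Proposition~\ref{prop10}, we have $\Fol_X(R,d)=0$ for every finite generating set $X\subseteq I$. Taking the infimum over all such $X$ in the definition of the uniform F\o lner constant then gives
\[
\Fol(R,d)=\inf_X \Fol_X(R,d)=0,
\]
the infimum being well-defined and nonnegative since $R$ is finitely generated (so at least one finite generating set exists). Since $\Folinn(R,d)\le \Fol(R,d)$ by the inequality recorded just after the definition of the F\o lner constants, and since $\Folinn(R,d)\ge 0$ as it is an infimum of nonnegative quantities, we conclude $\Folinn(R,d)=0$ as well. (Alternatively, one can invoke the implication (i)$\Longrightarrow$(iii) of Proposition~\ref{prop10} directly to get $\FolinnX(R,d)=0$ for every finite generating set $X$, and then take the infimum.)

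\medskip

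There is essentially no obstacle here: the corollary is a direct packaging of Proposition~\ref{prop10} into the language of the uniform invariants, and the only thing to verify is that the relevant infima are taken over a nonempty index set, which is guaranteed by the finite generation hypothesis.
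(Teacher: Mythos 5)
Your proof is correct and matches the paper's approach exactly: the paper presents this as an immediate corollary of Proposition~\ref{prop10}, obtained precisely by noting that the uniform constants are infima of the quantities $\Fol_X(R,d)$ and $\FolinnX(R,d)$, each of which vanishes for every finite generating set $X$ when $(R,d)$ is amenable. Nothing is missing.
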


It is not true that vanishing of the uniform F\o lner constants is equivalent to amenability. Examples can be found already for classical groups: \cite[Proposition 13.3]{Arzhantseva} shows that certain Baumslag-Solitar groups are non-amenable, but have uniform inner F\o lner constant equal $0$.

\subsection{Uniform exponential growth rate}

In this section we introduce the (uniform) exponential growth rate for finitely generated fusion algebra, closely connected to the notion studied in \cite{BanicaVergnioux} in the case of Kac type compact quantum groups (Remark \ref{rem:BV}), and in \cite{Arzhantseva} in the case of classical discrete groups (Remark \ref{rem:Aetal}). It gives an upper bound on the uniform (inner) F\o lner constant (Proposition \ref{prop1}) and is often easier to compute than the F\o lner constants. In particular, in Section \ref{sec:qdef} we will compute it for the fusion algebras associated with 
$q$-deformations of compact, semisimple, simply connected Lie groups, and in Section \ref{sec:UF} for the fusion algebras associated with free unitary quantum groups.

Recall that if $(R,d)$ is a finitely generated fusion algebra with finite generating set $X$, then in Definition \ref{def:length} we have introduced the notion of length $\ell_X$ and the corresponding notions of spheres and balls. Recall also that generating set $X$ is always assumed to be symmetric $\ov{X}=X$. We begin with a standard lemma.

\begin{lemma}\label{lemma2}
Let $X\subseteq I$ be a finite generating set, and suppose that $I$ is infinite. Then the limits $ \lim_{n\to\infty} \sqrt[n]{|B_X(n)|}$ and $ \lim_{n\to\infty} \sqrt[n]{|S_X(n)|}$ exist, and belong  to $ [1, \infty)$.
\end{lemma}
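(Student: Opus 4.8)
The plan is to realise both quantities as $n$-th roots of a \emph{submultiplicative} sequence of positive reals, and then to invoke Fekete's subadditivity lemma. Write $b(n)=|B_X(n)|$ and $s(n)=|S_X(n)|$; each is finite, since for fixed $n$ there are finitely many words $x_1\cdots x_k$ with $k\le n$ and $x_i\in X$, and each support $\supp(x_1\cdots x_k)$ is finite. The heart of the argument is a single size inequality for set products.

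For finite $A,B\subseteq I$ put $AB:=\bigcup_{\beta\in A,\,\gamma\in B}\supp(\beta\gamma)$. I would first prove that
\[
|AB|\le |A|\,|B|.
\]
The proof combines three facts: (a) $\I[\alpha\in AB]\le\sum_{\beta\in A,\gamma\in B}N^{\alpha}_{\beta,\gamma}$; (b) multiplicativity and $\ZZ$-linearity of $d$ give $d(\beta)d(\gamma)=\sum_{\alpha}N^{\alpha}_{\beta,\gamma}d(\alpha)$; and (c) since the structure constants are non-negative, $N^{\alpha}_{\beta,\gamma}\ge 1$ forces $d(\alpha)\le d(\beta)d(\gamma)$. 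Combining these,
\[
|AB|=\sum_{\alpha\in AB}d(\alpha)^2\le\sum_{\beta\in A,\gamma\in B}\sum_{\alpha}N^{\alpha}_{\beta,\gamma}d(\alpha)^2\le\sum_{\beta\in A,\gamma\in B}d(\beta)d(\gamma)\sum_{\alpha}N^{\alpha}_{\beta,\gamma}d(\alpha)=|A|\,|B|.
\]

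Next I would establish the set-theoretic inclusions $B_X(m+n)\subseteq B_X(m)B_X(n)$ and $S_X(m+n)\subseteq S_X(m)S_X(n)$. For the balls: given $\alpha$ with $\ell_X(\alpha)=k\le m+n$, either $k\le m$, in which case $\alpha\in\supp(\alpha\cdot e)$ with $\alpha\in B_X(m)$ and $e\in B_X(n)$, or $m<k$, in which case a minimal word $\alpha\subseteq x_1\cdots x_k$ splits as $(x_1\cdots x_m)(x_{m+1}\cdots x_k)$ and yields $\beta\subseteq x_1\cdots x_m$, $\gamma\subseteq x_{m+1}\cdots x_k$ with $\alpha\subseteq\beta\gamma$, $\ell_X(\beta)\le m$, $\ell_X(\gamma)\le n$. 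For the spheres the same splitting gives $\ell_X(\beta)\le m$, $\ell_X(\gamma)\le n$; subadditivity of length (if $\alpha\subseteq\beta\gamma$ then $\ell_X(\alpha)\le\ell_X(\beta)+\ell_X(\gamma)$) together with $\ell_X(\alpha)=m+n$ forces $\ell_X(\beta)=m$ and $\ell_X(\gamma)=n$, so $\beta\in S_X(m)$ and $\gamma\in S_X(n)$. Feeding these inclusions into the size inequality gives the submultiplicativity $b(m+n)\le b(m)b(n)$ and $s(m+n)\le s(m)s(n)$.

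Finally I would apply Fekete to the subadditive sequences $\log b(n)$ and $\log s(n)$, obtaining that $\lim_n\sqrt[n]{b(n)}=\inf_n\sqrt[n]{b(n)}$ and $\lim_n\sqrt[n]{s(n)}=\inf_n\sqrt[n]{s(n)}$ both exist. For the range $[1,\infty)$: each sequence is bounded below by $1$, since $e\in B_X(n)$ gives $b(n)\ge d(e)^2=1$, while $d(\alpha)\ge 1$ gives $s(n)\ge 1$ as soon as $S_X(n)\neq\emptyset$; this is precisely where infiniteness of $I$ enters, as one checks that $S_X(n_0)=\emptyset$ would propagate to $S_X(k)=\emptyset$ for all $k\ge n_0$ and force $I=B_X(n_0-1)$ to be finite. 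Boundedness above is automatic, since $\inf_n\sqrt[n]{b(n)}\le b(1)<\infty$ and likewise for $s$. The step I expect to be the main obstacle is the sphere inclusion $S_X(m+n)\subseteq S_X(m)S_X(n)$: unlike for balls, one must argue that the length splits \emph{exactly}, which requires combining minimality of a geodesic word with subadditivity of $\ell_X$, and separately one must rule out empty spheres using the infiniteness of $I$. The inequality $|AB|\le|A||B|$ is conceptually the crucial input, but it is routine once the roles of multiplicativity of $d$ and non-negativity of the fusion coefficients are identified.
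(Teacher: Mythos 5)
Your proof is correct and follows essentially the same route as the paper: submultiplicativity $|B_X(n+m)|\le|B_X(n)|\,|B_X(m)|$ (and likewise for spheres), derived from multiplicativity of $d$ together with non-negativity of the fusion coefficients, followed by Fekete's lemma. The only differences are cosmetic — the paper bounds $\sum_{\alpha\subseteq\beta\gamma}d(\alpha)\le d(\beta)d(\gamma)$ and then squares, while you use the pointwise bound $d(\alpha)\le d(\beta)d(\gamma)$ when $N^{\alpha}_{\beta,\gamma}\ge 1$; and your explicit exact-splitting argument for spheres and the non-emptiness of spheres via infiniteness of $I$ spell out what the paper compresses into ``an analogous reasoning''.
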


\begin{proof}
	Fix $n,m\in\NN$. Take $\beta\in B_X(n),\gamma\in B_X(m)$ and set $\Delta(\beta,\gamma)=\{\alpha\in B_X(n+m)\,|\, \alpha\subseteq \beta\gamma\}$. Clearly $\bigcup_{\beta\in B_X(n),\,\gamma\in B_X(m)} \Delta(\beta,\gamma)=B_X(n+m)$. Next,
	\[
	\sum_{\alpha\in \Delta(\beta,\gamma)}d(\alpha) \le d(\beta) d(\gamma)
	\]
	as $ d(\beta\gamma)=d(\beta)d(\gamma)$ and each $\alpha \in \Delta(\beta,\gamma)$ appears (at least once!) in the basis decomposition of $\beta \gamma$. It follows that
	\[
	\sum_{\alpha\in \Delta(\beta,\gamma)} d(\alpha)^2 + \sum_{\alpha,\alpha'\in \Delta(\beta,\gamma)\colon\alpha\neq \alpha'}
	\!\!\!\! d(\alpha) d(\alpha') 
	=
	\bigl(
	\sum_{\alpha\in \Delta(\beta,\gamma)}d(\alpha) 
	\bigr)
	\bigl(
	\sum_{\alpha'\in \Delta(\beta,\gamma)}d(\alpha') 
	\bigr)\le d(\beta)^2 d(\gamma)^2
	\]
	and since $d$ is a non-negative function on $I$ we have
	\[
	\sum_{\alpha\in \Delta(\beta,\gamma)} d(\alpha)^2 
	\le d(\beta)^2 d(\gamma)^2.
	\]
	Consequently
	\[\begin{split}
	\sum_{\alpha\in B_X(n+m)} d(\alpha)^2\le 
	\sum_{\beta\in B_X(n)}\sum_{\gamma\in B_X(m)} \sum_{\alpha\in \Delta(\beta,\gamma)} d(\alpha)^2
	\le \sum_{\beta\in B_X(n)}\sum_{\gamma\in B_X(m)} d(\beta)^2 d(\gamma)^2.
	\end{split}\]
	This shows $|B_X(n+m)|\le |B_X(n)|\,|B_X(m)|$ and a standard argument proves existence of the limit $\lim_{n\to\infty} \sqrt[n]{|B_X(n)|}$.  An analogous reasoning shows that $\lim_{n\to\infty}\sqrt[n]{|S_X(n)|}$ also exists.
\end{proof}

The limits above exist also when $I$ is finite; we then have $ \lim_{n\to\infty} \sqrt[n]{|B_X(n)|}=1$, but $ \lim_{n\to\infty} \sqrt[n]{|S_X(n)|}=0$.

We can now introduce the definition of the (uniform) exponential growth rate.

\begin{definition}
	Let $(R,d)$ be a fusion algebra generated by a finite set $X\subseteq I$. The \emph{exponential growth rate} of $R$ with respect to $X$ is given by
	\[
	\omega_X(R,d)=\lim_{n\to\infty} \sqrt[n]{|B_X(n)|},
	\]
	and the \emph{uniform exponential growth rate} by
\[
\omega(R,d)=\inf_X \omega_X(R,d),
\]
	where $X$ runs over all finite generating sets for $R$. If $\omega(R,d)>1$, then we say that $R$ has \emph{uniform exponential growth}.
\end{definition}

\begin{remark}\label{rem:BV}
		Let $\bbGamma$ be a discrete quantum group such that $\wh{\bbGamma}$ admits a fundamental representation. 
		 Let $X\subseteq \Irr(\wh{\bbGamma })$ be a finite generating set. Our definition of $\omega_X(R(\wh \bbGamma),d)$, for $\bbGamma$ unimodular (so that $\wh  \bbGamma$ is of Kac type), agrees with the notion of ratio of exponential growth from \cite[Section 4]{BanicaVergnioux}. Indeed, we have (using notation from \cite{BanicaVergnioux}) $|B_X(n)|=b_n$. Then the continuity of $\log\colon \RR_{\ge 1}\rightarrow \RR$ yields immediately
		\[
		\log(\omega_X(R(\wh \bbGamma),d))=\lim_{n\to\infty} \tfrac{\log(|B_X(n)|)}{n}=\lim_{n\to\infty} \tfrac{\log(b_n)}{n}.
		\]
		In particular, Lemma \ref{lemma2} shows that the above limit always exists.
\end{remark}

\begin{remark}\label{rem:Aetal}
	 Let $\Gamma$ be a discrete group with a finite generating set $X\subseteq \Gamma$. Then our definition of $\omega_X(\ZZ\Gamma)$ agrees with the notion of the exponential growth rate of $\Gamma$ with respect to $X$ studied in \cite{Arzhantseva}, and similarly respective notions of uniform exponential growth rate coincide.
\end{remark}

In the next result we show that one can calculate $\omega_X(R,d)$ using the size of spheres $|S_X(n)|\,(n\in\NN)$. Next we use it to compute the growth rate for the product fusion algebra (Remark \ref{rem:product}). 

\begin{lemma}\label{lemma3}
Let $(R,d)$ be a fusion algebra with a finite generating set $X\subseteq I$. Then $\omega_X(R,d)=\max\bigl( 1 , \lim_{n\to\infty} \sqrt[n]{|S_X(n)|}\,\bigr)$.
\end{lemma}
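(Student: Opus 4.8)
The plan is to exploit the fact that every object of $I$ has a unique length, so that the ball $B_X(n)$ is the disjoint union $\bigcup_{k=0}^n S_X(k)$ of the spheres it contains. Consequently the sizes add up, $|B_X(n)|=\sum_{k=0}^n |S_X(k)|$, and the whole statement reduces to comparing the exponential growth of a nonnegative sequence with that of its sequence of partial sums. I write $s=\lim_{n\to\infty}\sqrt[n]{|S_X(n)|}$ (which exists by Lemma \ref{lemma2} and the remark following it, including the degenerate case when $I$ is finite, where $s=0$) and $b=\omega_X(R,d)=\lim_{n\to\infty}\sqrt[n]{|B_X(n)|}$; the goal is then to show $b=\max(1,s)$.

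For the inequality $\max(1,s)\le b$ I would argue as follows. First, $b\ge 1$ always: this is part of Lemma \ref{lemma2} when $I$ is infinite, and is recorded as $b=1$ in the finite case. Second, since $S_X(n)\subseteq B_X(n)$ we have $|S_X(n)|\le|B_X(n)|$, and taking $n$-th roots and passing to the limit yields $s\le b$. Together these give $\max(1,s)\le b$.

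For the reverse inequality I would use a root-test estimate. Fix $\eps>0$. By the definition of $s$ as a limit there is a constant $C>0$ with $|S_X(k)|\le C(s+\eps)^k$ for all $k\ge 0$. Using the elementary bound $\sum_{k=0}^n r^k\le (n+1)\max(1,r^n)$, valid for every $r\ge 0$, I obtain
\[
|B_X(n)|=\sum_{k=0}^n |S_X(k)|\le C(n+1)\max\bigl(1,(s+\eps)^n\bigr),
\]
whence $\sqrt[n]{|B_X(n)|}\le \sqrt[n]{C(n+1)}\,\max(1,s+\eps)$. Letting $n\to\infty$ and using $\sqrt[n]{C(n+1)}\to 1$ gives $b\le\max(1,s+\eps)$, and then $\eps\to 0$ gives $b\le\max(1,s)$. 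Combining the two inequalities finishes the proof.

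I do not expect any genuine obstacle: the only content is the standard comparison between the growth rate of a sequence and that of its partial sums. The mild bookkeeping is to make the single bound $\max(1,s+\eps)$ cover both the infinite case (where in fact $s\ge 1$, since every sphere is then nonempty and each $d(\alpha)^2\ge 1$ forces $|S_X(k)|\ge 1$) and the finite case (where $s=0$ and $b=1$) simultaneously. The one point requiring slight care is to use the $\max$-estimate above rather than the closed form of the geometric series, so that no case distinction on whether $s+\eps\gtrless 1$ is needed.
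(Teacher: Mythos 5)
Your proof is correct, but it takes a genuinely different route from the paper's. The paper argues by cases: if $\omega_X(R,d)=1$ it just notes $\sqrt[n]{|S_X(n)|}\le\sqrt[n]{|B_X(n)|}\to1$; if $\omega_X(R,d)>1$ it observes that $\sup_n|S_X(n)|=\infty$, extracts a strictly increasing sequence $(n_k)_k$ along which $|S_X(n_k)|=\max_{0\le m\le n_k}|S_X(m)|$, bounds $|B_X(n_k)|\le(n_k+1)|S_X(n_k)|$, and closes the squeeze $\omega_X\le\lim_n|S_X(n)|^{1/n}\le\omega_X$, using the existence of the limit from Lemma \ref{lemma2} to identify the subsequential limit with the full one. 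You instead prove the two inequalities directly: the easy direction from $S_X(n)\subseteq B_X(n)$ together with $\omega_X\ge1$, and the substantive direction by dominating $|S_X(k)|\le C(s+\eps)^k$ and summing, with the uniform estimate $\sum_{k=0}^n r^k\le(n+1)\max(1,r^n)$ absorbing the finite and infinite cases at once. Your route is arguably cleaner: it needs neither the case split nor the maximal-sphere subsequence, and the key bound only uses $\limsup_k|S_X(k)|^{1/k}\le s$, so it is marginally more robust; the paper's trick, by contrast, works directly from the raw sequence without first converting the sphere growth into a geometric majorant, but since Lemma \ref{lemma2} is invoked in both arguments this is not a real saving here. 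One small caveat: your closing side remark that all spheres are nonempty when $I$ is infinite (hence $s\ge1$) is true but needs a one-line justification (if $S_X(k)=\emptyset$ then no irreducible can have length exceeding $k$, since any $\alpha\subseteq x_1\cdots x_m$ with $m>k$ sits inside $\beta x_{k+1}\cdots x_m$ for some $\beta$ of length at most $k$, forcing $\ell_X(\alpha)<m$); fortunately your argument nowhere depends on that remark.
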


\begin{proof}
If $\omega_X(R,d)=1$, then $\sqrt[n]{|S_X(n)|}\le \sqrt[n]{|B_X(n)|}\xrightarrow[n\to\infty]{}1$ and the claim holds.

Consider the second case, $\omega_X(R,d)>1$. Then $\sup_{n\in\NN}|S_X(n)|=+\infty$ (as otherwise $n\mapsto |B_X(n)|$ is bounded by a polynomial function) and we can find a strictly increasing sequence $(n_k)_{k\in\NN}$ of natural numbers such that $|S_{X}(n_k)| = \max_{0\le m \le n_k } |S_{X}(m)|$. For $k\in\NN$ we have
\[
|B_X(n_k)|=\sum_{m=0}^{n_k} |S_X(m)| \le 
(n_k+1) |S_X(n_k)|.
\]
Hence using Lemma \ref{lemma2} we obtain
\begin{align*}
\omega_X(R)&=\lim_{k\to\infty}|B_X(n_k)|^{1/n_k}\le 
\lim_{k\to\infty}\bigl((n_k+1)|S_X(n_k)|\bigr)^{1/n_k}=\lim_{n\to\infty}|S_X(n)|^{1/n} \\
&\le 
\lim_{n\to\infty}|B_X(n)|^{1/n}=\omega_X(R).
\end{align*}
\end{proof}

The next proposition will be used in Section \ref{sec:qdef} to reduce computation of $\omega(R(G_q))$ to the simple case. 

\begin{proposition}\label{prop4}
Let $(R_1,d_1), (R_2,d_2)$ be finitely generated fusion algebras and $(R,d)$ the product fusion algebra. The uniform growth rate of $(R,d)$ is given by
\begin{equation}\label{eq2}
\omega(R,d)=\max\bigl( \omega(R_1,d_1), \omega(R_2,d_2)\bigr).
\end{equation}
\end{proposition}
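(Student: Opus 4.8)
The plan is to prove the two inequalities $\omega(R,d)\le\max(\omega(R_1,d_1),\omega(R_2,d_2))$ and $\omega(R,d)\ge\max(\omega(R_1,d_1),\omega(R_2,d_2))$ separately, using two complementary constructions relating generating sets of $R$ to those of $R_1$ and $R_2$. Throughout I will use repeatedly the product fusion rule from Remark \ref{rem:product}, which gives $\alpha\boxtimes\beta\subseteq u\boxtimes v$ (for $u\in R_1$, $v\in R_2$) if and only if $\alpha\subseteq u$ and $\beta\subseteq v$, together with $d(\alpha\boxtimes\beta)^2=d_1(\alpha)^2 d_2(\beta)^2$.

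For the upper bound, given finite generating sets $X_1\subseteq I_1$ and $X_2\subseteq I_2$, I would take the ``product'' generating set $X=(X_1\times\{e_2\})\cup(\{e_1\}\times X_2)$ of $R$; it is symmetric and generating. Since generators in different legs commute, a length-$n$ word in $X$ has the form $(a_1\cdots a_p)\boxtimes(b_1\cdots b_q)$ with $p+q=n$, $a_i\in X_1$, $b_j\in X_2$, and the product fusion rule then yields $\ell_X(\alpha\boxtimes\beta)=\ell_{X_1}(\alpha)+\ell_{X_2}(\beta)$. Consequently $|S_X(n)|=\sum_{k+l=n}|S_{X_1}(k)|\,|S_{X_2}(l)|$ is the convolution of the two sphere-size sequences. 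A routine estimate (bounding $|S_{X_i}(k)|$ from above by $C(\sigma_i+\eps)^k$, where $\sigma_i=\lim_k|S_{X_i}(k)|^{1/k}$ exists by Lemma \ref{lemma2}, and from below using $|S_{X_2}(0)|=1$) shows that the $n$-th root of this convolution tends to $\max(\sigma_1,\sigma_2)$; Lemma \ref{lemma3} then gives $\omega_X(R,d)=\max(\omega_{X_1}(R_1,d_1),\omega_{X_2}(R_2,d_2))$. Taking the infimum over $X_1,X_2$ independently yields the desired upper bound.

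For the lower bound I would handle an arbitrary finite generating set $X\subseteq I$ of $R$ by projecting. Let $X_1=\{\alpha\in I_1\mid \exists\,\beta,\ \alpha\boxtimes\beta\in X\}$ and define $X_2$ symmetrically; these are finite, symmetric, and generating (from $\gamma\boxtimes e_2\subseteq z_1\cdots z_n$ one reads off $\gamma\subseteq a_1\cdots a_n$). Writing each $z\in X$ as $a\boxtimes b$ with $a\in X_1$, $b\in X_2$, the product fusion rule gives $\ell_{X_1}(\alpha)\le\ell_X(\alpha\boxtimes\beta)$, so the projection $\alpha\boxtimes\beta\mapsto\alpha$ maps $B_X(n)$ into $B_{X_1}(n)$; conversely, lifting a minimal $X_1$-word for $\alpha$ to an $X$-word shows this projection is onto $B_{X_1}(n)$. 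Since every fibre is nonempty and $d_2(\beta)^2\ge1$, grouping $|B_X(n)|=\sum_{\alpha\boxtimes\beta\in B_X(n)}d_1(\alpha)^2 d_2(\beta)^2$ by the first leg gives $|B_X(n)|\ge|B_{X_1}(n)|$, hence $\omega_X(R,d)\ge\omega_{X_1}(R_1,d_1)\ge\omega(R_1,d_1)$, and symmetrically $\ge\omega(R_2,d_2)$. Taking the infimum over $X$ gives the lower bound.

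The main obstacle is the lower bound, where $X$ need not be of product form: the key point is that projecting $X$ onto each leg still produces a \emph{generating} set, and that the projection of balls is surjective, so that the bound $d_i\ge1$ upgrades surjectivity to the size inequality $|B_X(n)|\ge|B_{X_i}(n)|$. The upper-bound direction is essentially the elementary fact that the exponential growth rate of a convolution of two sequences is the maximum of their rates; one only has to be slightly careful in the degenerate case where $I_1$ or $I_2$ is finite (then the corresponding $\omega$ equals $1$ and the formula still holds).
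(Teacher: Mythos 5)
Your proof is correct and takes essentially the same approach as the paper: the same product generating set with additive length function for the upper bound, and the same projection of an arbitrary generating set of $R$ onto the two legs (with surjectivity onto balls and $d_2\ge 1$ giving $|B_{X_1}(n)|\le |B_X(n)|$) for the lower bound. The only cosmetic difference is that the paper evaluates the growth rate of the sphere-size convolution via generating functions and radii of convergence (following Banica--Vergnioux), whereas you estimate the convolution directly; both routes then invoke Lemma \ref{lemma3} identically.
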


\begin{proof}
Recall (Remark \ref{rem:product}) that $R=R_1\otimes_{\ZZ} R_2$ and the set of irreducible objects in $R$ is given by $I=I_1\times I_2=\{\alpha\boxtimes \beta\mid \alpha\in I_1,\beta\in I_2\}$. Let $X_1\subseteq I_1$ (resp.~$X_2\subseteq I_2$) be a finite generating set for $R_1$ (resp.~ for $R_2$). Then $R$ is generated by (symmetric set) $X=\{\alpha\boxtimes e,e\boxtimes \beta\mid \alpha\in X_1,\beta\in X_2\}$.
The length of $\alpha\boxtimes\beta\in I$ is given by $\ell_X(\alpha\boxtimes\beta)=\ell_{X_1}(\alpha)+\ell_{X_2}(\beta)$. Thus for $n\in\ZZ_+$ we have
\[\begin{split}
S_X(n)&=
\bigcup_{k=0}^{n} \{\gamma\boxtimes\delta \;\big|\;
\gamma\in S_{X_1}(k), \,
\delta\in S_{X_2}(n-k)\bigr\}.
\end{split}\]
Let $f_X$ be the function defined by power series $f_X(z)=\sum_{n=0}^{\infty} |S_X(n)| z^n$ (on the subset of $\CC$ where the series converges), similarly define $f_{X_1},f_{X_2}$. Then (see \cite[Theorem 3.1]{BanicaVergnioux})
\[
f_X(z) = f_{X_1} (z) f_{X_2}(z).
\]
Comparing the radii of convergence \cite[Section 10.5]{RudinRC} gives
\[
\bigl(\limsup_{n\to\infty} |S_X(n)|^{1/n}\bigr)^{-1}=
\min \bigl(
\bigl(\limsup_{n\to\infty} |S_{X_1}(n)|^{1/n}\bigr)^{-1},
\bigl(\limsup_{n\to\infty} |S_{X_2}(n)|^{1/n}\bigr)^{-1}\bigr)
\]
or taking into account Lemma \ref{lemma2}
\[
\lim_{n\to\infty} |S_X(n)|^{1/n}=
\max \bigl(\lim_{n\to\infty} |S_{X_1}(n)|^{1/n},
\lim_{n\to\infty} |S_{X_2}(n)|^{1/n}\bigr).
\]
Consequently by Lemma \ref{lemma3}
\[
\omega_X(R,d)=
\max\bigl( 
\omega_{X_1}(R_1,d_1),
\omega_{X_2}(R_2,d_2)\bigr).
\]
As $X_1,X_2$ were arbitrary finite generating sets, we have  $
\omega(R,d)\le\max\bigl( \omega(R_1,d_1), \omega(R_2,d_2)\bigr)$. Note that the argument above follows essentially  the reasoning in the proof of \cite[Theorem 3.1]{BanicaVergnioux}.

For the converse inequality, let $X=\{\alpha_i^{(1)}\boxtimes\alpha^{(2)}_i\mid 1\le i \le N\}$ be a finite generating set for $R$. For $k\in \{1,2\}$ set $X_k=\{\alpha_i^{(k)}\mid 1\le i \le N\}$. Then $X_k$ is a finite, symmetric generating set for $R_k$. Take $k=1$ and an arbitrary $n\in\ZZ_+$. If $\alpha\in B_{X_1}(n)$, then there is $\beta\in I_2$ such that $\alpha\boxtimes \beta\in B_X(n)$. Consequently $|B_{X_1}(n)|\le |B_X(n)|$ and
\[
\omega(R_1,d_1)\le \omega_{X_1}(R_1,d_1)\le \omega_X(R,d).
\]
Since $X$ was an arbitrary generating set for $R$, we obtain $\omega(R_1,d_1)\le \omega(R,d)$. Similarly we check $\omega(R_2,d_2)\le \omega(R,d)$, which ends the proof.
\end{proof}

Fusion algebras of non-Kac type compact matrix quantum groups often have uniform exponential growth. We will quantify this observation in the next two results.

Let us introduce convenient terminology: if $\GG$ is a compact quantum group, then for $\alpha\in \Irr(\GG)$ we denote $\Gamma(\alpha)=\max \oon{Sp}(\uprho_\alpha)=\|\uprho_\alpha\|$ (\cite{KrajczokSoltanBounded}, \cite[Section 3.1]{KrajczokSoltan}) and $T^\sigma(\GG)=\{t\in \RR\mid \sigma^h_t=\id\}$, where $(\sigma^h_t)_{t\in\RR}$ is the \emph{modular group of the Haar integral} (\cite[Definition 2.1]{KrajczokSoltan}).

\begin{proposition}\label{prop9}
Let $\GG$ be a compact matrix quantum group which is not of Kac type and let
\[
C=\inf\{ \Gamma(\alpha)\mid \alpha\in \Irr(\GG)\colon \Gamma(\alpha)>1\} \ge 1.
\]
Then $\omega(R(\GG))\ge C^2$.
\end{proposition}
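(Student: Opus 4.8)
The plan is to prove the sharper statement that $\omega_X(R(\GG)) \ge C^2$ for \emph{every} finite generating set $X \subseteq \Irr(\GG)$, and then pass to the infimum over $X$. The whole argument rests on the multiplicativity of $\Gamma$ under tensor products, which I would isolate first.

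First I would record the behaviour of $\uprho$ and $\Gamma$ under tensoring. One has $\uprho_{\alpha \otimes \beta} = \uprho_\alpha \otimes \uprho_\beta$ (indeed $\uprho_\alpha \otimes \uprho_\beta$ is a positive invertible intertwiner for $U^{\alpha\otimes\beta}$ satisfying the same trace normalisation, so it equals $\uprho_{\alpha\otimes\beta}$ by uniqueness), and the canonical positive intertwiner is preserved by the unitary equivalence decomposing $U^\alpha \otimes U^\beta$ into irreducibles (see \cite{NeshveyevTuset}); hence the spectrum of $\uprho_{\alpha\otimes\beta}$ is the union of the spectra of the $\uprho_\gamma$ for $\gamma \subseteq \alpha\beta$. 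Taking norms yields
\[
\max_{\gamma \subseteq \alpha\beta} \Gamma(\gamma) = \Gamma(\alpha)\Gamma(\beta),
\]
so in particular $\Gamma(\gamma) \le \Gamma(\alpha)\Gamma(\beta)$ whenever $\gamma \subseteq \alpha\beta$. I would also use two elementary facts about a single $\uprho_\alpha$: from $\Tr(\uprho_\alpha) = \Tr(\uprho_\alpha^{-1})$ and positivity one gets $\Gamma(\alpha) = \|\uprho_\alpha\| \ge 1$, with equality iff $\uprho_\alpha = \id$; and $d(\alpha) = \Tr(\uprho_\alpha) \ge \|\uprho_\alpha\| = \Gamma(\alpha)$.

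Next come the reductions. Because $\GG$ is not of Kac type there is $\alpha_0$ with $\Gamma(\alpha_0) > 1$, and iterating the displayed identity along the tensor powers of $\alpha_0$ shows that $\Gamma$ is unbounded; hence $\Irr(\GG)$ is infinite and Lemma \ref{lemma2} guarantees that $\omega_X(R(\GG))$ exists. Now fix a finite generating set $X$. If every $x \in X$ satisfied $\Gamma(x) = 1$, then any $\alpha$, being a subobject of some product $x_1 \cdots x_n$ with $x_i \in X$, would obey $\Gamma(\alpha) \le \prod_i \Gamma(x_i) = 1$ and hence $\Gamma(\alpha) = 1$, contradicting non-Kac. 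So I may fix $x_0 \in X$ with $\Gamma(x_0) > 1$, and then $\Gamma(x_0) \ge C$ by the definition of $C$.

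Finally I would run the growth estimate. For each $n$, applying the norm identity to $\uprho_{x_0}^{\otimes n}$ produces an irreducible $\gamma_n \subseteq x_0 \cdots x_0$ ($n$ factors) with $\Gamma(\gamma_n) = \Gamma(x_0)^n$. Since $x_0 \in X$ we have $\ell_X(\gamma_n) \le n$, that is $\gamma_n \in B_X(n)$, so keeping only this one term,
\[
|B_X(n)| \ge d(\gamma_n)^2 \ge \Gamma(\gamma_n)^2 = \Gamma(x_0)^{2n} \ge C^{2n}.
\]
Thus $\sqrt[n]{|B_X(n)|} \ge C^2$ for all $n$, whence $\omega_X(R(\GG)) \ge C^2$; taking the infimum over finite generating $X$ gives $\omega(R(\GG)) \ge C^2$. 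The one genuinely nontrivial ingredient is the compatibility of $\uprho$ with the irreducible decomposition in the first step; once that multiplicativity is in hand, the remainder is elementary counting.
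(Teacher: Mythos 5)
Your argument is correct and follows essentially the same route as the paper's proof: there too one fixes, inside an arbitrary finite generating set $X$, an element $\alpha$ with $\Gamma(\alpha)>1$ (hence $\Gamma(\alpha)\ge C$, as otherwise $\GG$ would be of Kac type), obtains for every $n$ an irreducible $\beta_n\subseteq\alpha^{\otimes n}$ with $\Gamma(\beta_n)=\Gamma(\alpha)^n$ --- the paper simply cites \cite[Proposition 6.1]{KrajczokSoltanBounded} for this --- and concludes from $|B_X(n)|^{1/n}\ge d(\beta_n)^{2/n}\ge \Gamma(\beta_n)^{2/n}\ge C^2$ before taking the infimum over $X$. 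The one caveat concerns your inline derivation of that cited fact: for the reducible representation $U^{\alpha}\otimes U^{\beta}$ a positive invertible morphism onto its double contragredient satisfying $\Tr(\uprho)=\Tr(\uprho^{-1})$ is \emph{not} unique, so $\uprho_{\alpha\otimes\beta}=\uprho_\alpha\otimes\uprho_\beta$ cannot be justified ``by uniqueness''; the identity (and its compatibility with the irreducible decomposition) is nevertheless true for the canonical extension of $\uprho$ to reducible representations, e.g.\ via the Woronowicz character description $\uprho_U=(\id\otimes f_1)(U)$ in \cite[Section 1.7]{NeshveyevTuset}, which is exactly the content of the proposition the paper cites.
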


\begin{proof}
Observe that since $\GG$ is assumed to have a fundamental representation, $R(\GG)$ is finitely generated and $\omega(R(\GG))$ is well defined. If $C=1$ then the claim is trivial, hence assume $C>1$.

Let $X\subseteq \Irr(\GG)$ be a finite generating set. Then there is $\alpha\in X$ such that $\Gamma(\alpha)>1$ (as otherwise $\GG$ would have been of Kac type), hence $\Gamma(\alpha)\ge C$. By \cite[Proposition 6.1]{KrajczokSoltanBounded} for each $n\in\NN$ there is an irreducible representation $\beta_n\subseteq \alpha^{\otimes n}$ such that $\Gamma(\beta_n)=\Gamma(\alpha)^n\ge C^n$. Consequently $\beta_n\in B_X(n)$ and the claim follows from
\[
|B_X(n)|^{1/n} \ge |\{\beta_n\}|^{1/n}=
d(\beta_n)^{2/n}\ge 
\Gamma(\beta_n)^{2/n} \ge C^{2}.
\]
\end{proof}

\begin{corollary}\label{cor2}
Let $\GG$ be a compact matrix quantum group. If $T^\sigma(\GG)=\lambda \ZZ$ for $\lambda >0$, then $R(\GG)$ has uniform exponential growth with $\omega(R(\GG))\ge e^{2\pi / \lambda}$.
\end{corollary}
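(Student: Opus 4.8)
The plan is to deduce the statement directly from Proposition \ref{prop9}. Since $T^\sigma(\GG)=\lambda\ZZ$ with $\lambda>0$ is a \emph{proper} subgroup of $\RR$, the modular group $(\sigma^h_t)_{t\in\RR}$ is non-trivial, so $\GG$ is not of Kac type and Proposition \ref{prop9} applies: writing $C=\inf\{\Gamma(\alpha)\mid \alpha\in\Irr(\GG),\ \Gamma(\alpha)>1\}$, we have $\omega(R(\GG))\ge C^2$. It therefore suffices to prove that $C\ge e^{\pi/\lambda}$, for then $\omega(R(\GG))\ge C^2\ge e^{2\pi/\lambda}$, as required.

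To bound $C$ from below I would fix an arbitrary $\alpha\in\Irr(\GG)$ with $\Gamma(\alpha)>1$ and exploit that $\lambda\in T^\sigma(\GG)$, i.e.\ $\sigma^h_\lambda=\id$. Recall that the modular group of the Haar integral acts on the matrix coefficients of a chosen representative $U^\alpha=(u^\alpha_{ij})_{ij}$ via $\sigma^h_t(u^\alpha_{ij})=(\uprho_\alpha^{it}U^\alpha\uprho_\alpha^{it})_{ij}$, with the Woronowicz characters appearing on \emph{both} legs. Working in an orthonormal basis diagonalising $\uprho_\alpha$, say $\uprho_\alpha=\operatorname{diag}(\mu_1,\dotsc,\mu_d)$ with each $\mu_k>0$, the identity $\sigma^h_\lambda=\id$ combined with the linear independence of the matrix coefficients of $U^\alpha$ (Woronowicz orthogonality relations) forces $\mu_i^{i\lambda}\mu_j^{i\lambda}=1$ for all $i,j$; in particular $\mu_k^{2i\lambda}=1$, equivalently $2\lambda\log\mu_k\in 2\pi\ZZ$, for every eigenvalue $\mu_k$ of $\uprho_\alpha$.

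Now $\Gamma(\alpha)=\max\operatorname{Sp}(\uprho_\alpha)$ is an eigenvalue of $\uprho_\alpha$ which is strictly greater than $1$, so $\log\Gamma(\alpha)>0$ and $2\lambda\log\Gamma(\alpha)$ is a \emph{positive} integer multiple of $2\pi$. Hence $2\lambda\log\Gamma(\alpha)\ge 2\pi$, i.e.\ $\log\Gamma(\alpha)\ge \pi/\lambda$, so $\Gamma(\alpha)\ge e^{\pi/\lambda}$. As $\alpha$ was an arbitrary irreducible with $\Gamma(\alpha)>1$, this yields $C\ge e^{\pi/\lambda}$, completing the argument.

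I would expect the only delicate point to be the precise normalisation in the modular-group formula $\sigma^h_t(u^\alpha_{ij})=(\uprho_\alpha^{it}U^\alpha\uprho_\alpha^{it})_{ij}$: it is exactly the appearance of $\uprho_\alpha^{it}$ on both sides (equivalently, the condition $\uprho_\alpha^{2i\lambda}=I$ rather than $\uprho_\alpha^{i\lambda}=I$) that produces the exponent $2\pi/\lambda$, and this is the relationship between $T^\sigma(\GG)$ and the spectra of the matrices $\uprho_\alpha$ that one should extract from \cite{KrajczokSoltan}. As a consistency check, for $\SU_q(2)$ one has $\lambda=\pi/\log(q^{-1})$, whence $e^{2\pi/\lambda}=q^{-2}$, so that the bound $\omega(R(\SU_q(2)))\ge q^{-2}$ is in fact sharp, in agreement with the table in the introduction.
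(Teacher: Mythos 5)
Your proof is correct and follows essentially the same route as the paper: reduce to Proposition \ref{prop9}, diagonalise $\uprho_\alpha$, use $\sigma^h_t$ acting on matrix coefficients with $\uprho_\alpha^{it}$ on both legs to deduce $\Gamma(\alpha)^{2i\lambda}=1$, hence $\Gamma(\alpha)\ge e^{\pi/\lambda}$. The only cosmetic difference is that you derive the spectral condition for all eigenvalues via linear independence of matrix coefficients, whereas the paper applies the same formula directly to the single coefficient $U^\alpha_{1,1}$ with $(\uprho_\alpha)_{1,1}=\Gamma(\alpha)$.
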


\begin{proof}
Since $T^\sigma(\GG)\neq \RR$, $\GG$ is not of Kac type. Take $\alpha\in \Irr(\GG)$ with $\Gamma(\alpha)>1$ and choose an orthonormal basis in $\msf{H}_\alpha$ in which $\uprho_\alpha$ is diagonal and $(\uprho_{\alpha})_{1,1}=\Gamma(\alpha)$. Then $\sigma^h_t(U^{\alpha}_{1,1})=\Gamma(\alpha)^{2it} U^{\alpha}_{1,1}\,(t\in\RR)$ (\cite[Section 1.7]{NeshveyevTuset}). Hence $\Gamma(\alpha)^{2i\lambda}=1$ and $\lambda\log(\Gamma(\alpha)) \in \pi \ZZ$. It follows that $\Gamma(\alpha) \in \{e^{ \pi k /\lambda}\mid k\in\ZZ\}$ and $\Gamma(\alpha) \ge e^{\pi/\lambda}$. Now the claim is a consequence of Proposition \ref{prop9}.
\end{proof}

Let us compare these lower bounds to the actual value of $\omega(R(\GG))$ in several cases that we will study later on in detail.

\begin{remark}\label{rem3}
For a given compact quantum group $\GG$ let us write $C_{\GG}=\inf\{\Gamma(\alpha)\mid \alpha\in\Irr(\GG)\colon \Gamma(\alpha)>1\}$ for the constant appearing in Proposition \ref{prop9}.
\begin{itemize}
\item[(i)] For $SU_q(2)$ with $q\in (0,1)$ the lower bounds on $\omega(R(SU_q(2)))$ given in Proposition \ref{prop9} and Corollary \ref{cor2} are in fact equalities (see Proposition \ref{prop14}).
\item[(ii)] More generally, consider $q \in (0,1)$ and the $q$-deformation $G_q$ for a simply connected, semisimple, compact Lie group $G$, with decomposition into simple factors $G_q\simeq\prod_{a=1}^{l} (G_a)_q$ (Proposition \ref{prop8}). Then $\omega(R(G_q))=\max_{1\le a \le l} \omega(R((G_a)_q))$ (Proposition \ref{prop4}), $C_{G_q}=\min_{1\le a \le l} C_{(G_a)_q}$. Consequently, to compare $\omega(R(G_q))$ and $C_{G_q}$ it is enough to consider the simple case. For simple $G$ not of type $B_N(N\ge 4),D_N(N\ge 5)$, we see that in fact $\omega(R(G_q))={C_{G_q}}^2$, i.e.~Proposition \ref{prop9} gives a sharp bound. Indeed, the proof of Theorem \ref{thm1} shows that in these cases the lower bound of Corollary \ref{cor1} is attained. The claim follows as $\Gamma(\lambda)=q^{-\la \lambda |2\rho\ra }$ (Proposition \ref{prop5}). If $G$ is simple and of type $B_N (N\ge 4)$ or $D_N (N\ge 5)$, then $\omega(R(G_q))> {C_{G_q}}^2$, as follows from the proof of Theorem \ref{thm1}.

For general $G$ we have $T^\sigma(G_q)=\tfrac{\pi}{\log(q)}\ZZ$ (\cite[Proposition 2.3, Theorem 4.12]{KrajczokSoltan}). Thus by Theorem \ref{thm1} the inequality of Corollary \ref{cor2} is strict unless $G_a=SU(2)$ for all $1\le a \le l$.
\item[(iii)] For $SO_q(3)$ with $q \in (0,1)$ we have $C_{SO_q(3)}=q^{-2}$ and $\omega(R(SO_q(3) ))=q^{-4}$ by Proposition \ref{prop15}, hence Proposition \ref{prop9} gives a sharp bound. On the other hand, it is easy to check that $T^\sigma(SO_q(3))=\tfrac{\pi}{\log(q)}\ZZ$, and we see that Corollary \ref{cor2} is sub-optimal. 
\item[(iv)] Consider a non-Kac type free unitary quantum group $U_F^+$ with $F\in GL_N(\CC)$ satisfying $\Tr(F^*F)=\Tr((F^*F)^{-1})$ (see Section \ref{sec:UF}) and let $0<q<1$ be defined by $q+q^{-1}=\Tr(F^*F)$. We have $C_{U_F^+}=\min( \Gamma(\alpha),\Gamma(\ov\alpha) )$ \cite[Lemma 3.8]{KrajczokSoltan} and it is easy to check that $q \le {C_{U_F^+}}^{-1}$. Using Theorem \ref{thm2} and Proposition \ref{prop16} we see that $\omega(R(U_F^+)) > C_{U_F^+}^2$, i.e.~the bound of Proposition \ref{prop9} is strict. Consequently, the same is true for Corollary \ref{cor2}.
\end{itemize}
\end{remark}

At the end of this subsection we relate the exponential growth rate to the (inner) F\o lner constant, obtaining an analog of \cite[Proposition 1.4]{Arzhantseva}.

\begin{proposition}\label{prop1}
	Let $(R,d)$ be a fusion algebra with a finite generating set $X\subseteq I$. We have $\FolinnX(R,d)\le 1-\tfrac{1}{\omega_X(R,d)}$ and consequently $\Folinn(R,d)\le 1-\tfrac{1}{\omega(R,d)}$.
\end{proposition}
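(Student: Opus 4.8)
The plan is to use the balls $B_X(n)$ themselves as test sets in the definition of the inner F\o lner constant, controlling their inner boundaries by the spheres $S_X(n)$, and then to convert the resulting ball-quotient bound into a statement about the growth rate $\omega_X(R,d)$.

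The geometric heart of the argument is the inclusion
\[
\partial_X^{\inn}(B_X(n))\subseteq S_X(n)\qquad(n\ge 1).
\]
To see this I would argue that $B_X(n-1)$ is disjoint from the inner boundary: if $\alpha\in B_X(n-1)$, say $\alpha\subseteq x_1\cdots x_{n-1}$ with $x_i\in X$, then for any $x\in X$ the non-negativity of the fusion coefficients forbids cancellation, so $\supp(\alpha x)\subseteq \supp(x_1\cdots x_{n-1}x)$, and every element of the right-hand side has length at most $n$. Hence $\supp(\alpha x)\subseteq B_X(n)$ for all $x\in X$, meaning $\alpha\notin \partial_X^{\inn}(B_X(n))$. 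Writing $b_n=|B_X(n)|$, this gives
\[
|\partial_X^{\inn}(B_X(n))|\le |S_X(n)|=b_n-b_{n-1}.
\]

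Testing the definition of $\FolinnX(R,d)$ against $A=B_X(n)$ then yields $\FolinnX(R,d)\le 1-\tfrac{b_{n-1}}{b_n}$ for every $n\ge 1$. To finish I would take the infimum over $n$ and compare with the growth rate: setting $c=\inf_{n\ge1} b_n/b_{n-1}$ and using $b_0=d(e)^2=1$, iteration gives $b_n\ge c^n$, so Lemma \ref{lemma2} forces $\omega_X(R,d)=\lim_n b_n^{1/n}\ge c$. Since all quantities are positive,
\[
\FolinnX(R,d)\le \inf_{n\ge1}\Bigl(1-\tfrac{b_{n-1}}{b_n}\Bigr)=1-\tfrac1c\le 1-\tfrac{1}{\omega_X(R,d)}.
\]
The uniform statement then follows on taking the infimum over all finite generating sets $X$: as $\omega_X\ge1$ we get $\inf_X\bigl(1-1/\omega_X(R,d)\bigr)=1-1/\inf_X\omega_X(R,d)=1-1/\omega(R,d)$.

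I expect the only genuine subtlety to be the inclusion $\partial_X^{\inn}(B_X(n))\subseteq S_X(n)$: this is precisely where the defining non-negativity of the fusion coefficients (so that supports of products grow monotonically and no cancellation occurs) is indispensable, and where the argument would break for a general ring. The passage from $\inf_n b_n/b_{n-1}$ to $\lim_n b_n^{1/n}$ is routine, as is the degenerate case of finite $I$, in which the balls stabilise, $\omega_X=1$, and both sides vanish.
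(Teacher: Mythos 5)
Your proof is correct and follows essentially the same route as the paper: both arguments rest on the key inclusion $\partial_X^{\inn}(B_X(n))\subseteq S_X(n)$ and then test $\FolinnX(R,d)$ against the balls $B_X(n)$ to get $\FolinnX(R,d)\le 1-|B_X(n-1)|/|B_X(n)|$. The only (harmless) difference is the final analytic step: you bound $\omega_X(R,d)$ from below by $c=\inf_{n\ge 1}|B_X(n)|/|B_X(n-1)|$ via the elementary iteration $|B_X(n)|\ge c^n$, whereas the paper invokes the standard ratio-versus-root inequality $\liminf_{n\to\infty} |B_X(n)|/|B_X(n-1)|\le \lim_{n\to\infty}\sqrt[n]{|B_X(n)|}$.
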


\begin{proof}
Fix $n\ge 2$. We claim that
\begin{equation}\label{eq5}
\partial_X^{\inn}(B_X(n))\subseteq S_X(n).
\end{equation}
Take $\alpha\in \partial_X^{\inn}(B_X(n))$. Then $\alpha\in B_X(n)$. Suppose that $\ell_X(\alpha)<n$. Since $\alpha\in \partial^{\inn}_X(B_X(n))$, there exists $x\in X$ such that $\supp(\alpha x)\nsubseteq B_X(n)$. Thus, there is $\beta\notin B_X(n)$ so that $\beta\subseteq \alpha x$. But this forces $\ell_X(\beta) \le \ell_X(\alpha)+1\le n$. This is a contradition, which proves \eqref{eq5}. It follows that 
	\[
	\frac{ |\partial^{\inn}_X(B_X(n))|}{|B_X(n)|}\le \frac{|S_X(n)|}{|B_X(n)|}= 
	\frac{|B_X(n)|-|B_X(n-1)|}{|B_X(n)|}=1- \frac{|B_X(n-1)|}{|B_X(n)|}.
	\]
	Since $\omega_X(R,d)=\lim_{n\to\infty} \sqrt[n]{|B_X(n)|}$, we also have $\liminf_{n\to\infty} \tfrac{|B_X(n)|}{|B_X(n-1)|}\le \omega_X(R,d)$ (\cite[Theorem 3.37]{RudinPMA}). Consequently
\[\begin{split}
\FolinnX(R,d) &=\inf_{A\subseteq I, A \textup{ finite}} \frac{|\partial_X^{\inn} (A)|}{|A|}\le 
\liminf_{n\to\infty}
\frac{ |\partial^{\inn}_X(B_X(n))|}{|B_X(n)|}
\le \liminf_{n\to\infty} \bigl(1- \frac{|B_X(n-1)|}{|B_X(n)|}\bigr)\\
&=
		1-\frac{1}{\liminf_{n\to\infty} \tfrac{|B_X(n)|}{|B_X(n-1)|}}\le 
		1-\frac{1}{\omega_X(R,d)}.
	\end{split}\]
\end{proof}

\begin{remark}
If $(R,d)$ is a fusion algebra with a finite generating set $X\subseteq I$, then using a similar argument we can bound $\Fol_X(R,d)$ as follows:
\begin{equation}\label{eq9}
\Fol_X(R,d)\le \liminf_{n\to\infty} \frac{|B_X(n+1)|}{|B_X(n)|} - \frac{1}{\limsup_{n\to\infty} \frac{|B_X(n+1)|}{|B_X(n)|}}.
\end{equation}
It is not clear if the limit $\lim_{n\to\infty} \tfrac{|B_X(n+1)|}{|B_X(n)|}$ exists in general, hence we cannot express the right hand side of \eqref{eq9} using $\omega_X(R,d)$.
\end{remark}

\begin{corollary}
Let $(R,d)$ be a finitely generated fusion algebra. If $(R,d)$ is uniformly non-amenable, i.e.\ $\Folinn(R,d)>0$, then it has uniform exponential growth.
\end{corollary}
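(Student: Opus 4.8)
The plan is to obtain this as an immediate consequence of Proposition \ref{prop1}. Recall that, by definition, $(R,d)$ has \emph{uniform exponential growth} precisely when $\omega(R,d)>1$; and since $R$ is finitely generated, Lemma \ref{lemma2} guarantees that $\omega_X(R,d)\in[1,\infty)$ for every finite generating set $X$, so that $\omega(R,d)=\inf_X\omega_X(R,d)$ is a well-defined real number lying in $[1,\infty)$. In particular $\tfrac{1}{\omega(R,d)}$ makes sense and the relevant algebraic manipulations are legitimate.

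First I would invoke Proposition \ref{prop1}, which supplies the key inequality
\[
\Folinn(R,d)\le 1-\frac{1}{\omega(R,d)}.
\]
Under the standing hypothesis $\Folinn(R,d)>0$ this forces $1-\tfrac{1}{\omega(R,d)}>0$, i.e.\ $\tfrac{1}{\omega(R,d)}<1$, and rearranging gives $\omega(R,d)>1$. By the definition recalled above, this is exactly the statement that $(R,d)$ has uniform exponential growth, which completes the argument. I expect no genuine obstacle at this stage: all the substantive work has already been done in establishing Proposition \ref{prop1} (via the sphere-containment \eqref{eq5} and the ratio-test estimate), together with Lemma \ref{lemma2}, which ensures that $\omega(R,d)$ is finite and at least $1$ so that passing to $\tfrac{1}{\omega(R,d)}$ is valid.
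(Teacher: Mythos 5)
Your proof is correct and takes exactly the paper's route: the corollary is stated there as an immediate consequence of Proposition \ref{prop1}, and your rearrangement of $\Folinn(R,d)\le 1-\tfrac{1}{\omega(R,d)}$ into $\omega(R,d)>1$ is precisely the intended argument. The side remarks about Lemma \ref{lemma2} ensuring $\omega(R,d)\in[1,\infty)$ are accurate and harmless, though the paper leaves them implicit.
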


\subsection{Uniform Kazhdan constants for representations}

The article \cite{Arzhantseva} connects the F\o lner constant to  so-called \emph{Kazhdan  constants for representations}. The variations of the latter in the case of quantum groups, and the context of (central) Property (T), have been considered for example in \cite{Fima} and, in a somewhat more general version, in \cite{DSV}. 

Our framework is rather connected  to \emph{central Kazhdan constants}, related to \emph{central Property (T)} of Arano, as considered for example in \cite{VaesValvekens} and in \cite{SergeyMakoto}. It is in particular the language of the last of these papers which is directly relevant here.

Recall that $(R,d)$ is a fusion algebra  and let $\mathcal{C}_R$ denote the complexification of $R$, which we can view as a complex unital $*$-algebra with vector space structure equal to $\mathbb{C}[I]$.

\begin{definition}
Suppose that $H_\pi$ is a Hilbert space and $\pi\colon\mathcal{C}_R\to \B(H_\pi)$ is a unital $*$-representation, which contains no non-trivial \emph{invariant vectors}; this means that 
\[ \textup{Fix} (\pi) = \{\xi \in H_\pi\mid  \forall_{\alpha \in I}\, \pi(\alpha) \xi = d(\alpha) \xi\}=0.\] 
For a finite  generating set $X\subseteq I$  set
\[\Kaz(X,\pi,(R,d))= \inf_{ \xi \in H_\pi, \|\xi\|=1 } \;\;\max_{\alpha \in X} \;\;\; \tfrac{\| \pi(\alpha )\xi - d(\alpha) \xi\|}{d(\alpha)}, \]
and further 
\[\Kaz(\pi,(R,d)) = \inf_{X\subseteq I, X - \textup{finite, generating}} \Kaz(X,\pi,(R,d)). \]
We call $ \Kaz(\pi,(R,d))$ the \emph{Kazhdan constant of $\pi$}. 
\end{definition}

The reason for the normalisation chosen in the above definition will become apparent later on.

Consider now the left regular representation of $\cC_R$ on $\ell^2(I)$, given by a bounded linear extension of the formula ($\alpha, \beta \in I$)
\[ \lambda(\alpha) (\delta_\beta) = 
\sum_{\eta \in I}   N_{\alpha, \beta}^\eta \delta_\eta.\]
The fact that the operators $\lambda(\alpha)$ are bounded is essentially due to \cite{HiaiIzumi}. It will be more convenient for us to work with the unitarily equivalent \emph{right-regular} representation $\rho\colon \cC_R \to \B(\ell^2(I))$, given by the formula $\rho(c) = U^* \lambda(c) U$, $c \in \cC_R$, where $U\colon \ell^2(I) \to \ell^2(I)$ is the unitary mapping $\delta_\alpha \mapsto \delta_{\overline{\alpha}}\,(\alpha\in I)$. It is then easy to check via the Frobenius reciprocity, that we have ($\alpha, \beta \in I$)
\[ \rho(\alpha) (\delta_\beta) = 
\sum_{\eta \in I}   N_{\beta, \ov\alpha}^\eta \delta_\eta.\]
The following is inspired by \cite[Proposition 2.4]{Arzhantseva}. 

\begin{proposition}\label{prop:KazhFol}
	Let $(R,d)$ be a finitely generated fusion algebra and let $\rho\colon \cC_R\to \B(\ell^2(I))$ be the right regular representation of $\cC_R$. Then 
	\[ \Fol(R,d) \geq {\Kaz(\rho,(R,d) )^2}.\]
\end{proposition}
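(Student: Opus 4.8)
The plan is to establish, for each finite generating set $X$, the inequality $\Fol_X(R,d) \geq \Kaz(X,\rho,(R,d))^2$, and then take infima over $X$ on both sides. Since the right regular representation $\rho$ has no non-trivial invariant vectors (this follows because $R$ is infinite-dimensional when finitely generated and non-amenable, or more directly because a fixed vector would force $\lambda_\mu$ to have norm $1$ against the trivial-like character, and one checks directly that $\ell^2(I)$ carries no such vector), the Kazhdan constant $\Kaz(\rho,(R,d))$ is well defined. The heart of the matter is the pointwise comparison for a single set $X$.

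First I would fix a non-empty finite set $A \subseteq I$ and form the normalised indicator-type vector in $\ell^2(I)$ adapted to the dimension-weighted size $|A| = \sum_{\alpha \in A} d(\alpha)^2$. The natural candidate, following the classical scheme of \cite[Proposition 2.4]{Arzhantseva}, is
\[
\xi_A = \frac{1}{\sqrt{|A|}} \sum_{\beta \in A} d(\beta)\, \delta_\beta,
\]
which is a unit vector since $\|\xi_A\|^2 = \tfrac{1}{|A|}\sum_{\beta\in A} d(\beta)^2 = 1$. The idea is that $\xi_A$ is almost invariant under $\rho(\alpha)$ precisely when $A$ is almost a F\o lner set. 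So the key computation would be to estimate $\|\rho(\alpha)\xi_A - d(\alpha)\xi_A\|^2$ from above in terms of $|\partial_X(A)|$ and $|A|$, for $\alpha \in X$.

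The main step is to expand $\rho(\alpha)\xi_A = \tfrac{1}{\sqrt{|A|}}\sum_{\beta \in A} d(\beta) \sum_{\eta} N^{\eta}_{\beta,\ov\alpha}\delta_\eta$ and compare it with $d(\alpha)\xi_A$. I expect to split the difference according to whether the target index $\eta$ lies inside or outside $A$, and to exploit the fusion-algebra identities: the multiplicativity $d(\beta)d(\alpha)=\sum_\eta N^\eta_{\beta,\ov\alpha}d(\eta)$ (using $d(\alpha)=d(\ov\alpha)$), which says that $\rho(\alpha)$ is ``dimension-preserving'' on the weighted level, together with Frobenius reciprocity $N^\eta_{\beta,\ov\alpha}=N^\beta_{\eta,\alpha}$. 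The coefficients that fail to cancel are exactly those supported on pairs $(\beta,\eta)$ with $\beta\in A$, $\eta\notin A$ (or vice versa) and $N^\eta_{\beta,\ov\alpha}>0$, i.e.\ on the boundary $\partial_X(A)$. A Cauchy--Schwarz estimate applied to the boundary terms, controlled by the multiplicativity of $d$ so that row and column sums of the matrix $(N^\eta_{\beta,\ov\alpha}d(\eta)/d(\beta))$ are bounded by $d(\alpha)^2$, should yield
\[
\max_{\alpha\in X} \frac{\|\rho(\alpha)\xi_A - d(\alpha)\xi_A\|^2}{d(\alpha)^2} \le \frac{|\partial_X(A)|}{|A|},
\]
possibly after absorbing constant factors by a careful bookkeeping of which boundary points contribute. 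Taking the infimum over unit vectors on the left bounds $\Kaz(X,\rho,(R,d))^2$ above by $|\partial_X(A)|/|A|$; taking the infimum over $A$ then gives $\Kaz(X,\rho,(R,d))^2 \le \Fol_X(R,d)$, and finally the infimum over generating $X$ yields the claim.

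The hard part will be the boundary bookkeeping in the Cauchy--Schwarz step: unlike the group case where $\rho(\alpha)$ is a permutation (so $N^\eta_{\beta,\ov\alpha}\in\{0,1\}$ and each $\beta$ has a unique image), here $\rho(\alpha)$ is a genuine non-negative integer matrix whose rows and columns may have several entries of varying size, and the weights $d(\eta)$ vary across the support. Controlling the cross terms and ensuring that every non-cancelling contribution is genuinely charged to an element of $\partial_X(A)$—rather than over-counted—will require using both multiplicativity of $d$ and Frobenius reciprocity carefully, and is where an honest constant (or the precise squared form) has to be justified rather than quoted by analogy with \cite{Arzhantseva}.
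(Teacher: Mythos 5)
Your skeleton is the paper's proof: the same test vector $\xi_A=|A|^{-1/2}\sum_{\beta\in A}d(\beta)\delta_\beta$, the same reduction to showing $\Kaz(X,\rho,(R,d))^2\le |\partial_X(A)|/|A|$ for each finite generating set $X$ and finite $A\subseteq I$, and the same splitting of the error vector according to whether a coordinate $\eta$ lies in $\partial_X(A)$ or not. But the step you defer --- ``a Cauchy--Schwarz estimate \dots possibly after absorbing constant factors'' --- is precisely the step that carries the proposition, and the route you sketch for it is the wrong one. The inequality $\Fol(R,d)\ge\Kaz(\rho,(R,d))^2$ has no room for constants: a factor $C_X$ depending on the generating set (which is what Schur-test or row/column-sum bookkeeping with the data $M_X$ and $\max_{x\in X}d(x)$ naturally produces, cf.\ Proposition \ref{prop:C_X}) is fatal once you take the infimum over $X$, and even a universal constant $C>1$ would only give $\Fol\ge\Kaz^2/C$, not the stated result.

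The observation you are missing is that no Cauchy--Schwarz and no cross-term analysis is needed, because the estimate is diagonal in $\eta$. With $\xi=\sum_{\beta\in A}d(\beta)\delta_\beta$ (unnormalised) and $\phi(\eta)=(\rho(\alpha)\xi-d(\alpha)\xi)(\eta)=\sum_{\beta\in A}d(\beta)N^\beta_{\eta,\alpha}-d(\alpha)d(\eta)\chi_A(\eta)$ (using Frobenius reciprocity $N^\eta_{\beta,\ov\alpha}=N^\beta_{\eta,\alpha}$), one checks three cases. If $\eta\in A^c\setminus\partial_X(A)$ then $\supp(\eta\alpha)\subseteq A^c$, so $N^\beta_{\eta,\alpha}=0$ for all $\beta\in A$ and $\phi(\eta)=0$. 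If $\eta\in A\setminus\partial_X(A)$ then $\supp(\eta\alpha)\subseteq A$, so $\sum_{\beta\in A}d(\beta)N^\beta_{\eta,\alpha}=\sum_{\beta\in I}d(\beta)N^\beta_{\eta,\alpha}=d(\eta)d(\alpha)$ by multiplicativity of $d$, and again $\phi(\eta)=0$. If $\eta\in\partial_X(A)$, then both nonnegative numbers $\sum_{\beta\in A}d(\beta)N^\beta_{\eta,\alpha}$ and $d(\alpha)d(\eta)\chi_A(\eta)$ lie in the interval $[0,d(\alpha)d(\eta)]$ (the first one because the sum over $A$ is dominated by the sum over all of $I$), whence $|\phi(\eta)|\le d(\alpha)d(\eta)$ with constant exactly one. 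Summing squares over $\eta\in\partial_X(A)$ gives $\|\rho(\alpha)\xi-d(\alpha)\xi\|^2\le d(\alpha)^2\,|\partial_X(A)|$, i.e.\ after normalisation $\|\rho(\alpha)\xi_A-d(\alpha)\xi_A\|^2/d(\alpha)^2\le|\partial_X(A)|/|A|$; that is the whole proof. A side remark: your justification that $\rho$ has no invariant vectors is garbled --- it has none precisely when $I$ is infinite, independently of amenability; when $I$ is finite the vector $\sum_{\alpha\in I}d(\alpha)\delta_\alpha$ is invariant, and the statement is only meaningful for infinite $I$.
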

\begin{proof}
	Fix $\epsilon>0$.
	Choose a finite generating set $X\subseteq I$ and a non-empty finite subset $A\subseteq I$ such that 
	$\Fol(R,d)+\epsilon> M:= \frac{|\partial_X(A)|}{|A|}$. Set $\xi =\sum_{\beta \in A} d(\beta) \delta_\beta$, fix $\alpha \in X$ and consider the function $\phi\colon I \to \mathbb{C}$ given by  
	\[\phi(\eta) = (\rho(\alpha) \xi - d(\alpha)\xi)(\eta) = 
	\sum_{\beta \in A}  d(\beta) N_{\eta, \alpha}^\beta  - d(\alpha) d(\eta)\chi_A(\eta), \;\; \eta \in I.\] 
	Consider several possible cases. First if $\eta \in A^c \setminus \partial_X(A)$, we have $N_{\eta,\alpha}^\beta =0$ for every $\beta \in A$ (as $\eta \alpha$ does not intersect $A$), so $\phi(\eta) =0$. Second, if $\eta \in A \setminus \partial_X (A)$, we have $\eta \alpha \subseteq A$, so 
	$\sum_{\beta \in A} N_{\eta, \alpha}^\beta d(\beta)= d(\eta) d(\alpha)$ and again $\phi(\eta) =0$.
	
	For $\eta \in \partial_X (A)$ we can simply estimate
	\[ |\phi(\eta)| = \bigl|\sum_{\beta \in A}  d(\beta) N_{\eta, \alpha}^\beta  - d(\alpha) d(\eta)\chi_A(\eta)\bigr| 
	\leq d(\alpha) d(\eta),\]
	using the inequality $\sum_{\beta \in A} N_{\eta, \alpha}^\beta d(\beta)\leq d(\eta) d(\alpha)$.
	
	Note that $\|\xi\|^2 = \sum_{\beta \in A} d(\beta)^2 = |A|$ and set $\tilde{\xi} = \xi \|\xi\|^{-1}$. We then have 
	\begin{align*} \|\rho(\alpha) \tilde\xi - d(\alpha)\tilde\xi\|^2 &= |A|^{-1} \sum_{\eta \in I} |\phi(\eta)|^2 
		= |A|^{-1} \sum_{\eta \in \partial_X (A)} |\phi(\eta)|^2  \leq |A|^{-1} \sum_{\eta \in \partial_X (A)} d(\alpha)^2 d(\eta)^2 \\ &= d(\alpha)^2 |A|^{-1} |\partial_X (A)| = d(\alpha)^2 M. \end{align*}
	This implies that $\Kaz(X,\rho,(R,d))^2 \leq M$ and as $\epsilon$ was arbitrary, the proof is complete. 
\end{proof}

Note that the statement in  \cite[Proposition 2.4]{Arzhantseva} involves the `inner' F\o lner constant and apparently compensates by adding a numerical factor (claiming for $R =\mathbb{Z}\Gamma$ what in our language would be $\Folinn (R,d) \geq \frac{{\Kaz(\rho,(R,d) )^2}}2$); it is however not clear how this can be achieved (see also \cite[Appendix A]{LPV}).

We can finally connect the Kazhdan constants above to categorical Property (T), as considered for example in \cite{SergeyMakoto}. So suppose that $R$ is a fusion ring associated to a rigid $C^*$-tensor category $\mathfrak{C}$. One can reformulate \cite[Proposition 4.22]{SergeyMakoto} by saying that $\mathfrak{C}$ has Property (T) if and only if  there exist $\epsilon>0$ and a finite set $X\subseteq I$ such that for any \emph{admissible} (see \cite[Section 3]{SergeyMakoto} or \cite[Section 4]{PopaVaes}) representation $\pi\colon\mathcal{C}_R \to \B(H)$ we have $\Kaz(X,\pi,(R,d))>\epsilon$. If $R$ is finitely generated, we can naturally assume that $X$ is generating (possibly adding some elements to it). In other words, $\mathfrak{C}$ has Property (T) if the class of all admissible representations of $\mathcal{C}_R$ which do not admit invariant vectors is isolated from the trivial representation. We could thus propose the following definition, modelled after \cite[Definition 8.1]{Shalom} (see also \cite[Definition 3]{Osin}). 

\begin{definition}
	Let $\mathfrak{C}$ be a rigid $C^*$-tensor category with the associated fusion algebra $(R,d)$ and let $\mathcal{F}$ be a class of admissible representations of $\mathcal{C}_R$ which do not admit invariant vectors. We say that $\mathcal{F}$ is uniformly isolated from  the trivial representation, if there exists $\epsilon>0$ such that for every $\pi \in \mathcal{F}$ we have $\Kaz(\pi, (R,d)) >\epsilon$.
\end{definition}

Now as the representation $\rho$ considered above is admissible (see for example \cite[Corollary 4.4]{PopaVaes}), and contains no invariant vectors if and only if $I$ is infinite, we obtain the following corollary.


\begin{corollary}
	Suppose that $(R,d)$ is the finitely generated fusion algebra associated to a rigid $C^*$-tensor category $\mathfrak{C}$. Then if $\Fol(R,d)=0$, and $I$ is infinite, then the left regular representation of $\mathfrak{C}$ cannot be uniformly isolated from the trivial one.
\end{corollary}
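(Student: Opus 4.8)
The plan is to read off the conclusion almost directly from Proposition~\ref{prop:KazhFol}, which already bounds the Kazhdan constant of the regular representation by the uniform F\o lner constant. First I would reconcile the two forms of the regular representation appearing in the text: the corollary speaks of the \emph{left} regular representation $\lambda$, whereas Proposition~\ref{prop:KazhFol} is phrased in terms of the \emph{right} regular representation $\rho$. These are unitarily equivalent via the unitary $U\colon \delta_\alpha\mapsto\delta_{\ov\alpha}$ introduced above, and the Kazhdan constant is a unitary invariant: if $U\pi U^*=\pi'$, then for any unit vector $\xi$ one has $\|\pi'(\alpha)\xi-d(\alpha)\xi\|=\|\pi(\alpha)(U^*\xi)-d(\alpha)(U^*\xi)\|$, and $U^*\xi$ ranges over all unit vectors as $\xi$ does, while the $\max_{\alpha\in X}$ and the two infima are untouched. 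Hence $\Kaz(\lambda,(R,d))=\Kaz(\rho,(R,d))$.

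Next I would check that the left regular representation genuinely fits the framework of the definition of uniform isolation, so that the statement is not vacuous. By \cite[Corollary 4.4]{PopaVaes} the regular representation is admissible, and, as recorded in the discussion preceding the corollary, it contains no invariant vectors precisely because $I$ is infinite. Thus the singleton class $\mathcal{F}=\{\lambda\}$ is an admissible class of representations without invariant vectors, and the notion of being uniformly isolated from the trivial representation applies to it. For a singleton class, unwinding the definition shows that being uniformly isolated means exactly that there exists $\epsilon>0$ with $\Kaz(\lambda,(R,d))>\epsilon$, which, since the Kazhdan constant is always non-negative, is equivalent to $\Kaz(\lambda,(R,d))>0$.

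The decisive step is then the immediate application of Proposition~\ref{prop:KazhFol}: combining the inequality $\Kaz(\rho,(R,d))^2\le\Fol(R,d)$ with the hypothesis $\Fol(R,d)=0$ forces $\Kaz(\rho,(R,d))=0$, and by the unitary equivalence established above $\Kaz(\lambda,(R,d))=0$ as well. Consequently there is no $\epsilon>0$ with $\Kaz(\lambda,(R,d))>\epsilon$, so $\{\lambda\}$ fails the defining condition and the left regular representation cannot be uniformly isolated from the trivial one, as claimed.

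I do not expect a genuine obstacle here, since the content of the statement is essentially a repackaging of Proposition~\ref{prop:KazhFol} as a contrapositive. The only points requiring care are the bookkeeping ones: confirming admissibility of $\rho$, verifying the absence of invariant vectors when $I$ is infinite, and using unitary-invariance of the Kazhdan constant to pass freely between $\lambda$ and $\rho$ so that the inequality proved for the right regular representation applies to the left one named in the corollary.
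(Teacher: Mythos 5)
Your proposal is correct and is essentially the paper's own argument: the paper derives the corollary immediately from Proposition~\ref{prop:KazhFol} together with the facts, stated just before the corollary, that the regular representation is admissible (\cite[Corollary 4.4]{PopaVaes}) and has no invariant vectors precisely when $I$ is infinite. Your extra verification that the Kazhdan constant is unchanged under the unitary equivalence $\rho = U^*\lambda U$ is a careful bookkeeping point the paper leaves implicit, but it does not change the route.
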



Note that \cite[Theorem 4.1]{HiaiIzumi} and the proof of \cite[Proposition 5.3]{PopaVaes} imply that a rigid $\mrm{C}^*$-tensor category $\mathfrak{C}$ (with the associated fusion algebra $(R,d)$) is amenable if and only if the regular representation contains almost invariant vectors. Assuming that $(R,d)$ is finitely generated, this is the same as saying that for every finite generating set $X\subseteq I$ we have $\Kaz(X,\rho,(R,d))=0$.

	


\section{Fusion algebra of $SU_q(2)$}
\label{Sec:SU} 

In this section we will compute the invariants of the last section for the fusion algebra related to representation theory of $SU_q(2)$, with the deformation parameter $q \in (0,1]$.

We will index irreducible representations of $SU_q(2)$ by $ \ZZ_+$, so that in particular $U_1$ denotes the standard fundamental representation of $SU_q(2)$. 
The resulting fusion algebra $R$ is determined by the fusion rules of $SU(2)$. Thus $U_1$ generates $R$, the basis $I$ can be identified with non-negative integers (with $0:=e$, $1:=U_1$) and the fusion rules are simply
\begin{equation}\label{eq16}
m \otimes n = |m-n| \oplus (|m-n|+2) \oplus \cdots \oplus (m+n), \;\;\; m,n \in \ZZ_+.
\end{equation}
An arbitrary dimension function $\tilde{d}\colon \ZZ_+\to [1, \infty)$ compatible with the above fusion rules is  determined by $\tilde{d}(1)$. As noted in Subsection \ref{sec:discrete/compact}, the fusion algebra $R(SU(2))$ is amenable, so that by Proposition \ref{prop:amenmin} we must have $\tilde{d}(1)\geq 2$. For the quantum dimension of $SU_q(2)$ we have specifically $d(1)=q+ q^{-1}$. Note that we can realise all possible dimension functions on the ring above using some $q \in (0,1]$.

Let us start with a well-known lemma.  Recall the notion of $q$-number $[n]_q\,(n\in\ZZ_+)$, introduced in Section \ref{sec:discrete/compact}.

\begin{lemma} \label{dimensions}
	Let $q \in (0,1]$ and let $d\colon \ZZ_+\to \RR_+$ denote the quantum dimension function of $R(SU_q(2))$. Then $d(n)=[n+1]_q$ for all $n\in\ZZ_+$.
\end{lemma}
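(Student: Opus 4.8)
The plan is to reduce the claim to a second-order linear recurrence satisfied by both sides. The dimension function $d$ is multiplicative and $\ZZ$-linear, so its values are rigidly constrained by the fusion rules \eqref{eq16}, and it suffices to check that $n \mapsto [n+1]_q$ satisfies the same recurrence with the same initial data.

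First I would record the base cases. Since $d$ is $\ZZ$-linear with $d(e)=1$, we have $d(0) = 1 = [1]_q$ (as $[1]_q = \tfrac{q^{-1}-q}{q^{-1}-q}=1$), while by hypothesis $d(1) = q+q^{-1} = [2]_q$, the latter equality being the elementary factorisation $q^{-2}-q^2 = (q^{-1}-q)(q^{-1}+q)$. Next I would extract the recurrence from the fusion rules: specialising \eqref{eq16} to $n=1$ gives $m\otimes 1 = (m-1)\oplus(m+1)$ for every $m\ge 1$, and applying $d$ (using multiplicativity and $\ZZ$-linearity) yields $d(m)d(1) = d(m-1)+d(m+1)$, i.e.
\[
d(m+1) = (q+q^{-1})\,d(m) - d(m-1), \qquad m\ge 1.
\]

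It remains to verify the matching recurrence for $q$-numbers. A short computation gives the identity $[m+1]_q + [m-1]_q = (q+q^{-1})[m]_q$: the numerator of the left-hand side is $q^{-(m+1)}+q^{-(m-1)}-q^{m+1}-q^{m-1} = (q^{-1}+q)(q^{-m}-q^m)$, which upon dividing by $q^{-1}-q$ produces exactly $(q+q^{-1})[m]_q$. Thus $n\mapsto [n+1]_q$ obeys the same recurrence as $d$, and a straightforward induction on $n$ completes the argument for $q\in(0,1)$; the case $q=1$ is covered by the convention $[x]_1 = x$, under which the identical base cases and recurrence hold (with $d(n)=n+1$).

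There is no serious obstacle here: the statement is an elementary induction once the recurrence is in hand. The only point requiring a moment of care is the algebraic verification of the $q$-number identity $[m+1]_q+[m-1]_q=(q+q^{-1})[m]_q$, and the (purely notational) check that the degenerate value $q=1$ fits the same scheme.
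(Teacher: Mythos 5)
Your proof is correct and takes essentially the same approach as the paper's: both establish the base cases $d(0)=1$, $d(1)=q+q^{-1}=[2]_q$ and the recurrence $d(m)d(1)=d(m-1)+d(m+1)$ from the fusion rules, then observe that the $q$-integers satisfy the same recurrence. The only cosmetic differences are that you verify the $q$-number identity by hand where the paper cites a reference, and the paper additionally records an explicit closed-form solution $d(n)=\alpha q^{-n}+\beta q^{n}$ of the recurrence for later use.
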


\begin{proof}
	The above equalities are a consequence of $d(0)=1$, $d(1) = q+q^{-1}$ and the recurrence relation
	\begin{equation}\label{eq10}
	d (n) d(1) = d(n-1) + d(n+1), \;\;\; n \in \NN, 
	\end{equation}
	which is obviously satisfied by the $q$-integers (see \cite[Formula(7)]{Kachurik}). 
	
	If $d(1)>2$, we can also offer an alternative argument, which will be useful later: set $d=d(1)$ and let
	\[ d_{\pm} = \frac{d \pm \sqrt{d^2-4}}{2}. \label{alphabeta}\]
	Then 
	\[ d(n) = \alpha d_+^n + \beta d_{-}^n, \;\;\; n \in \ZZ_+,\]
	where $\alpha, \beta \in \mathbb{R}$ are determined by the initial conditions
	\[ \alpha+ \beta =1, \;\;\alpha d_+ + \beta d_- = d.\]
	Indeed, note that $d(n)$ is determined by the recurrence relation \eqref{eq10}. The standard trick with writing the recurrence relation in a matrix form shows that for each $n \in \NN$ we have 
	\[ \begin{pmatrix}  d(n+1)\\ d(n) \end{pmatrix} =  \begin{pmatrix}  d & -1\\ 1 & 0 \end{pmatrix} \begin{pmatrix}  d(n)\\ d(n-1) \end{pmatrix},\]
	and it is easy to check that the matrix in the middle has eigenvalues $\{d_+, d_-\}$.  Note that in fact we have $d_+ = q^{-1}$, $d_- =q$; thus 
	$\alpha = \frac{q^{-1}}{q^{-1} -q} $, $\beta = - \frac{q}{q^{-1} -q}$ and we recover the formula in the statement of the lemma.


\end{proof}

The next lemma is also easy.

\begin{lemma}\label{lemma7}
	Let $q \in (0,1)$ and let $d\colon  \ZZ_+\to \RR_+$ denote the quantum dimension function of $R(SU_q(2))$. The functions $f\colon \NN\ni M\mapsto \tfrac{ d(M)^2}{\sum_{m=1}^{M} d(m)^2 } \in \RR$ and 
	$g\colon \NN\ni M\mapsto \tfrac{ d(M+1)^2}{\sum_{m=1}^{M} d(m)^2 } \in \RR$ are strictly decreasing.
\end{lemma}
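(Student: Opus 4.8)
The plan is to pass to reciprocals: writing $S_M=\sum_{m=1}^M d(m)^2$, it suffices to show that $M\mapsto 1/f(M)=S_M/d(M)^2$ and $M\mapsto 1/g(M)=S_M/d(M+1)^2$ are strictly increasing. Using Lemma~\ref{dimensions} we have $d(m)=[m+1]_q$, and reindexing by the ``distance from the top'' $j$ I rewrite these as
\[
\frac{1}{f(M)}=\sum_{j=0}^{M-1}\Bigl(\tfrac{[M+1-j]_q}{[M+1]_q}\Bigr)^2,\qquad
\frac{1}{g(M)}=\sum_{j=1}^{M}\Bigl(\tfrac{[M+2-j]_q}{[M+2]_q}\Bigr)^2 .
\]
In each sum the summand indexed by a fixed $j$ has the form $\bigl([n]_q/[n+j]_q\bigr)^2$ with $n$ increasing by one each time $M$ increases by one.

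The key step is therefore the following monotonicity claim: for every fixed integer $k\ge 1$ the map $n\mapsto [n]_q/[n+k]_q$ is strictly increasing on $\NN$. I would first reduce to $k=1$ via the telescoping identity $\tfrac{[n]_q}{[n+k]_q}=\prod_{i=0}^{k-1}\tfrac{[n+i]_q}{[n+i+1]_q}$, so that it suffices to treat a single factor. For $k=1$ an explicit computation gives $\tfrac{[n]_q}{[n+1]_q}=\tfrac{q(1-q^{2n})}{1-q^{2n+2}}$; substituting $s=q^{2n}$ (which is strictly decreasing in $n$ since $q\in(0,1)$) turns this into $s\mapsto q(1-s)/(1-q^2 s)$, whose derivative has the sign of $q(q^2-1)<0$. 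Hence the factor is strictly decreasing in $s$ and thus strictly increasing in $n$, and a finite product of such factors is strictly increasing in $n$ as well. (Alternatively one could run an argument in the spirit of Lemma~\ref{lemma9}, but the single-factor computation is the most direct route.)

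Granting this, the conclusion is immediate. When $M$ is replaced by $M+1$, every summand of $1/f(M)$ with $j\ge 1$ strictly increases (the $j=0$ summand equals $1$ and is unchanged), and an additional strictly positive term appears; the same reasoning applies verbatim to $1/g(M)$, where \emph{every} summand has $j\ge 1$. Therefore $1/f$ and $1/g$ are strictly increasing, so $f$ and $g$ are strictly decreasing. The only genuine work is the monotonicity of $[n]_q/[n+k]_q$ in $n$; everything else is bookkeeping, and I expect that single-factor estimate to be the main (and only mildly technical) obstacle.
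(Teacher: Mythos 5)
Your proof is correct: the reindexing of $1/f$ and $1/g$ checks out, the telescoping reduction to $k=1$ is valid, and the computation $[n]_q/[n+1]_q=q(1-q^{2n})/(1-q^{2n+2})$ together with the substitution $s=q^{2n}$ does give strict monotonicity in $n$. It is worth noting that your key claim and the paper's are the same underlying fact in different clothing: the paper's central inequality \eqref{eq12}, $d(M+1)d(k)\le d(M)d(k+1)$ for $k\le M$, is precisely the statement that $n\mapsto d(n)/d(n+1)=[n+1]_q/[n+2]_q$ is non-decreasing, i.e.\ your monotonicity claim. The difference lies in the packaging and in how that fact is proved. The paper cross-multiplies $f(M+1)<f(M)$, expands the difference $d(M)^2\sum_{m=1}^{M+1}d(m)^2-d(M+1)^2\sum_{m=1}^{M}d(m)^2$ as $d(M)^2d(1)^2$ plus termwise-nonnegative contributions, and verifies \eqref{eq12} purely algebraically from $d(n)=\alpha q^{-n}+\beta q^n$ with $\alpha\beta<0$ (Lemma \ref{dimensions}), a decomposition it reuses in Lemma \ref{lemma8}; strictness comes from the lone positive term $d(M)^2d(1)^2$. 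You instead pass to reciprocals and reindex from the top, so that each summand becomes $([n]_q/[n+j]_q)^2$ with $n$ shifting up by one, and you prove the monotonicity by calculus in $s=q^{2n}$; strictness comes from the freshly appearing positive term, which also covers the degenerate case $M=1$ for $f$ where no old summand moves. Your route is arguably more transparent about why the statement is true (each normalized term moves the right way and a new one appears), while the paper's stays entirely algebraic and keeps the $\alpha,\beta$ bookkeeping that the rest of the section relies on; either is a complete proof.
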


\begin{proof}
	Fix $M\in\NN$. We have $f(M+1)<f(M)$ if, and only if
	\begin{equation}\label{eq11}\begin{split}
	d(M+1)^2\sum_{m=1}^{M} d(m)^2
	< d(M)^2 \sum_{m=1}^{M+1} d(m)^2.
	\end{split}\end{equation}
	To prove the above inequality, it is enough to argue that 
	\begin{equation}\label{eq12}
	d(M+1) d(k) \le  d(M) d(k+1) 
	\end{equation}
	holds for $k\in\{1,\dotsc, M\}$. Indeed, then inequality \eqref{eq11} holds as
	\begin{align*}
&\quad\;	d(M)^2  \sum_{m=1}^{M+1} d(m)^2 - 
	d(M+1)^2  \sum_{m=1}^{M} d(m)^2 \\&=
	d(M)^2  d(1)^2 +
	\sum_{m=1}^{M}  \bigl( d(M)^2d(m+1)^2 - d(M+1)^2 d(m)^2)>0.
	\end{align*}
	
	Recall then from the proof of Lemma \ref{dimensions} that we have $\alpha>0$ and $\beta <0$ such that $d(n) = \alpha q^{-n} + \beta q^n$, $n \in \NN$.
	Thus 
	\begin{align*} 	d(M+1) d(k) - d(M) d(k+1) &= \alpha \beta (q^{-M-1} q^k + q^{M+1} q^{-k} - q^M q^{-k-1}
	- q^{k+1} q^{-M}) 
	\\& = \alpha \beta ((q^{-M} q^k - q^M q^{-k})(q^{-1} - q))\le 0.
	\end{align*}
	
We can deal with the function $g$ very similarly. Note first that $g(M+1)<g(M)$ if and only if
\[
d(M+2)^2\sum_{m=1}^{M} d(m)^2
< d(M+1)^2 \sum_{m=1}^{M+1} d(m)^2.
\]
Then observe that
\begin{align*}
&\quad\;
d(M+1)^2  \sum_{m=1}^{M+1} d(m)^2- 
d(M+2)^2  \sum_{m=1}^{M} d(m)^2 \\
&=
d(M+1)^2  d(1)^2 +
\sum_{m=1}^{M}  \bigl( d(M+1)^2d(m+1)^2 - d(M+2)^2 d(m)^2\bigr),
\end{align*}	
and the last expression is strictly positive thanks to the inequality \eqref{eq12}, which we have already established.
	
	
\end{proof}

We are ready to compute the  F\o lner constants in the case of the standard generating set.

\begin{lemma}\label{lemma8}
	Let $X=\{1\}$ be a generating set of $R(SU_q(2))$, $q \in (0,1)$. Then we have $\FolinnX(R(SU_q(2)))=1-q^2$, 
	$\Fol_X(R(SU_q(2)))=q^{-2} - q^{2}$.
\end{lemma}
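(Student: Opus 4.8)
The plan is to exploit the fact that, with $X=\{1\}$, the fusion rule $n\otimes 1=(n-1)\oplus(n+1)$ for $n\ge 1$ (and $0\otimes 1=1$) turns the combinatorics into an isoperimetric problem on the half-line $\ZZ_+$: for finite $A\subseteq\ZZ_+$ a point $n$ lies in $\partial_X^{\inn}(A)$ exactly when $n\in A$ but some neighbour $n\pm1$ is absent from $A$, while $\partial_X(A)$ adjoins those points of $A^c$ adjacent to $A$. Recall $d(m)=[m+1]_q$. Everything will be driven by two uniform ``tail-domination'' estimates, valid for every $M\in\ZZ_+$:
\[
d(M)^2 > (1-q^2)\sum_{m=0}^{M} d(m)^2
\quad\text{and}\quad
d(M+1)^2 > (q^{-2}-1)\sum_{m=0}^{M} d(m)^2 .
\]
I would prove both by the same telescoping device. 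Setting $T_M$ (resp.\ $U_M$) equal to the left-hand side minus the right-hand side, one computes $T_M-T_{M-1}=q^2 d(M)^2-d(M-1)^2$ and $U_M-U_{M-1}=d(M+1)^2-q^{-2}d(M)^2$. Both increments are positive because $d(m+1)>q^{-1}d(m)$, i.e.\ $[m+2]_q>q^{-1}[m+1]_q$, which after clearing the positive denominator $q^{-1}-q$ is simply $q^2<1$. Since $T_0=q^2 d(0)^2=q^2>0$ and $U_0=d(1)^2-(q^{-2}-1)=q^2+3>0$, monotonicity yields positivity for all $M$.

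For the lower bounds I would decompose $A$ into its maximal intervals $[a_j,b_j]$, numbered so that $a_{j+1}\ge b_j+2$. The right endpoint $b_j$ always lies in $\partial_X^{\inn}(A)$, and since the intervals are separated the inner boundary is additive over them; combining the first estimate with $\sum_{m=a_j}^{b_j}d(m)^2\le\sum_{m=0}^{b_j}d(m)^2$ gives $|\partial_X^{\inn}(A)|\ge\sum_j d(b_j)^2>(1-q^2)\sum_j|[a_j,b_j]|=(1-q^2)|A|$. For the full boundary the crucial point is that for each $j$ one has both $b_j\in\partial_X^{\inn}(A)$ and $b_j+1\in\partial_X(A)\cap A^c$ (as $b_j\in A$, $b_j+1\notin A$), and that the points $b_1,b_1+1,b_2,b_2+1,\dots$ are pairwise distinct: this uses $b_j+1<a_{j+1}$, so no outer point is shared between consecutive intervals. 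Hence $|\partial_X(A)|\ge\sum_j\bigl(d(b_j)^2+d(b_j+1)^2\bigr)$, and adding the two estimates gives $d(b)^2+d(b+1)^2>(q^{-2}-q^2)\sum_{m=0}^{b}d(m)^2$ for each $b$, whence $|\partial_X(A)|>(q^{-2}-q^2)|A|$.

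For the matching upper bounds I would test the balls $A_M=\{0,1,\dots,M\}=B_X(M)$, for which directly $\partial_X^{\inn}(A_M)=\{M\}$ and $\partial_X(A_M)=\{M,M+1\}$. The two F\o lner ratios are therefore $\tfrac{d(M)^2}{\sum_{m=0}^M d(m)^2}$ and $\tfrac{d(M)^2+d(M+1)^2}{\sum_{m=0}^M d(m)^2}$; substituting $d(m)=\tfrac{q^{-(m+1)}-q^{m+1}}{q^{-1}-q}$ and summing the geometric tails shows these converge to $1-q^2$ and $q^{-2}-q^2$ respectively. (Equivalently one may test $\{1,\dots,M\}$ and use the strict monotonicity of the functions $f,g$ of Lemma \ref{lemma7}, whose limits are $1-q^2$ and $q^{-2}-1$.) Together with the lower bounds, this pins the infima at exactly $1-q^2$ and $q^{-2}-q^2$.

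The step I expect to be the main obstacle is the lower bound for the \emph{full} boundary, where one must avoid double-counting outer-boundary points shared by nearby intervals. Restricting attention to the right endpoints $b_j$ and their immediate successors $b_j+1$ (and simply discarding the left endpoints, which only weakens the inequality in the safe direction) is precisely what keeps the interval sum clean and reduces everything to the two per-$b$ estimates established in the first paragraph; the telescoping proofs of those estimates are then routine once the elementary inequality $d(m+1)>q^{-1}d(m)$ has been isolated.
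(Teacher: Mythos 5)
Your proposal is correct, but the lower-bound mechanism is genuinely different from the paper's. The paper's proof handles an arbitrary finite $A\subseteq\ZZ_+$ by replacing it with the ball $A'=\{0,\dotsc,\sup A\}$: since $\sup A\in\partial_X^{\inn}(A)$ (and $\sup A,\,\sup A+1\in\partial_X(A)$) while $|A|\le|A'|$, the F\o lner ratio can only decrease under this replacement, so both infima reduce to infima over balls; the paper then invokes the monotonicity statements of Lemma \ref{lemma7} to identify the infimum over balls with the limit $M\to\infty$, which is computed exactly as you do. You instead prove a uniform strict lower bound valid for every finite $A$ directly, via the decomposition into maximal integer intervals, the observation that the right endpoints $b_j$ and their successors $b_j+1$ give pairwise distinct boundary points, and the two telescoped tail-domination inequalities $d(M)^2>(1-q^2)\sum_{m=0}^{M}d(m)^2$ and $d(M+1)^2>(q^{-2}-1)\sum_{m=0}^{M}d(m)^2$ (whose inductive proofs, resting on $d(m+1)>q^{-1}d(m)$, are correct, as are the base cases $T_0=q^2$, $U_0=q^2+3$). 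Your route is longer but buys two things: it bypasses Lemma \ref{lemma7} entirely, making the argument self-contained, and it shows the infima are not attained (every ratio is strictly above the limit value); the paper's monotonization trick buys brevity, outsourcing the analytic content to the already-proved monotonicity lemma. The upper bounds coincide: both test the balls $\{0,\dotsc,M\}$, whose inner and full boundaries are $\{M\}$ and $\{M,M+1\}$, and compute the same geometric-series limits $1-q^2$ and $q^{-2}-q^2$.
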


\begin{proof}
	We begin with the inner F\o lner constant.
	By definition we have
	\[
	\FolinnX(R(SU_q(2)))=\inf_A \tfrac{|\partial_X^{\inn} A| }{|A|}
	\]
	where $A\subseteq \ZZ_+$ is finite and non-empty. So, fix such a set $A$, put $M=\sup A$ and define  $A'=\{0,\dotsc ,M\}$. Clearly we have $|A|\le |A'|$. Furthermore,
	\[
	\partial_X^{\inn} A'=
	\{ \alpha\in A'\mid
(\alpha-1 \in \ZZ_+\setminus A' )\,\vee \,(\alpha+1 \in \ZZ_+ \setminus A')	\}=
	\{M\}.
	\]
	Since $M\in A$ and $M+1\notin A$ we also have $M\in \partial_X^{\inn} A$, thus $|\partial_X^{\inn} A'| \le |\partial_X^{\inn} A|$. It follows that
	\[
	\frac{ |\partial_X^{\inn} A'|}{|A'|}\le  \frac{ |\partial_X^{\inn} A|}{|A|}
	\]
	and consequently
	\[
	\FolinnX(R(SU_q(2)))=\inf_{M\in \ZZ_+} \frac{ |\partial_X^{\inn} \{0,\dotsc, M\}|}{|\{0,\dotsc, M\}|}=\inf_{M\in \ZZ_+} \frac{ |\{M\}|}{|\{0,\dotsc, M\}|}.
	\]
The proof of Lemma \ref{dimensions} implies that we have $\alpha >0$ and $\beta<0$ such that  $d(n) = \alpha q^{-n} + \beta q^n$, $n \in \NN$. Since $	\tfrac{|\{M\}|}{|\{0,\dotsc, M\}|}=
	\tfrac{d(M)^2}{\sum_{k=0}^{M} d(k)^2}$, we can use the monotonicity of the function $f$ from Lemma \ref{lemma7} to get that
	\begin{align*}
	\FolinnX(R(SU_q(2)))&=
	\lim_{M\to\infty}
	\tfrac{|\{M\}|}{|\{0,\dotsc, M\}|}=
	\lim_{M\to\infty}
	\tfrac{d(M)^2}{\sum_{k=0}^{M} d(k)^2} =
	\lim_{M\to\infty} \frac{(\alpha q^{-M} + \beta q^{M})^2}
	{1+ \sum_{k=1}^{M} (\alpha q^{-k}+ \beta q^k )^2 }
	\\
	&=
	\lim_{M\to\infty} \frac{\alpha^2 q^{-2M} + 2\alpha \beta  + \beta^2 q^{2M} 
	}
	{1 + \sum_{k=1}^{M}( \alpha^2 q^{-2k} +2\alpha \beta  +\beta^2 q^{2k} )}= \frac{\alpha^2}
	{-\alpha^2 q^{-2}(1-q^{-2})^{-1}
	} \\
&=1-q^2.
	\end{align*}
	
The argument concerning the other uniform F\o lner constant is very similar. Given $A\subseteq \ZZ_+$  finite and non-empty, we note as above that for $M=\sup A$ and $A'=\{0,\dotsc ,M\}$ we have $\partial_X A'=
	\{M, M+1\}$ and 
	\[
	\Fol_X(R(SU_q(2)))=\inf_{M\in \ZZ_+} \frac{ |\partial_X^{\inn} \{0,\dotsc, M\}|}{|\{0,\dotsc, M\}|}=\inf_{M\in \ZZ_+} \frac{ |\{M, M+1\}|}{|\{0,\dotsc, M\}|}.
	\]
Hence, this time using the monotonicity of $f+g$, where $f$ and $g$ are functions of Lemma \ref{lemma7}, 
	\begin{align*}
	\Fol_X(R(SU_q(2)))&=
	\lim_{M\to\infty}
	\tfrac{|\{M, M+1\}|}{|\{0,\dotsc, M\}|}=
	\lim_{M\to\infty}
	\tfrac{d(M)^2 +d(M+1)^2}{\sum_{k=0}^{M} d(k)^2} \\&=
	\lim_{M\to\infty} \frac{(\alpha q^{-M} + \beta q^{M})^2 + (\alpha q^{-(M+1)} + \beta q^{M+1})^2}
	{1+ \sum_{k=1}^{M} (\alpha q^{-k}+ \beta q^k )^2 }
	\\
	&=
	\lim_{M\to\infty} \frac{\alpha^2(1 +q^{-2}) q^{-2M} + 4\alpha \beta  + \beta^2(1+q^2) q^{2M} 
	}
	{1 + \sum_{k=1}^{M}( \alpha^2 q^{-2k} +2\alpha \beta  +\beta^2 q^{2k} )}\\
	&= \frac{\alpha^2 (1+q^{-2})}
	{-\alpha^2 q^{-2}(1-q^{-2})^{-1}
	} =(1-q^2)(1+q^{-2}) = q^{-2} - q^{2}.
	\end{align*}
\end{proof}

As expected, the standard generating set turns out to be optimal.

\begin{proposition}\label{prop12}
Let $q\in (0,1]$. Then 
\[\begin{split}
\Folinn(R(SU_q(2)))&=1-q^{2}, \\
\Fol(R(SU_q(2)))&=q^{-2} - q^{2}.
\end{split}\]
\end{proposition}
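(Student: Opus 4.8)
The plan is to combine Lemma~\ref{lemma8} (which handles the standard generating set $X=\{1\}$) with a uniform lower bound valid for \emph{every} finite generating set. First dispose of $q=1$: then $R(SU(2))$ is amenable (Subsection~\ref{sec:discrete/compact}), so $\Folinn=\Fol=0$ by Corollary~\ref{prop3}, matching $1-q^2=0$ and $q^{-2}-q^2=0$. Assume henceforth $q\in(0,1)$. The \emph{upper bounds} are then immediate: since $X=\{1\}$ is generating, the definition of the uniform constants as infima over all finite generating sets together with Lemma~\ref{lemma8} gives $\Folinn(R(SU_q(2)))\le 1-q^2$ and $\Fol(R(SU_q(2)))\le q^{-2}-q^2$.

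All the content is thus in the \emph{lower bounds}, and the plan is to prove the pointwise estimates $|\partial_X^{\inn}(A)|\ge(1-q^2)|A|$ and $|\partial_X(A)|\ge(q^{-2}-q^2)|A|$ for an arbitrary finite generating set $X\subseteq\ZZ_+$ and an arbitrary finite nonempty $A\subseteq\ZZ_+$; taking the infimum over $A$ and then over $X$ finishes the argument. The engine is a single growth estimate for the quantum dimension. Writing $d(n)=\alpha q^{-n}+\beta q^n$ with $\alpha>0>\beta$ as in the proof of Lemma~\ref{dimensions}, one computes
\[
d(k+1)-q^{-1}d(k)=\beta q^{k-1}(q^2-1)>0\qquad(k\in\ZZ_+),
\]
so $d(k+1)\ge q^{-1}d(k)$, whence $d(\alpha)^2\le q^{2x}d(\alpha+x)^2$ and $d(\alpha+x)^2\ge q^{-2x}d(\alpha)^2$ for all $\alpha,x\in\ZZ_+$. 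I would then fix any $x\in X$ with $x\ge 1$ (possible since $X$ is generating), and use that $\overline{x}=x$ and that $\alpha+x$ is the top element of $\supp(\alpha\otimes x)$.

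For the inner boundary, put $A^{\circ}=A\setminus\partial_X^{\inn}(A)=\{\alpha\in A\mid \forall_{x'\in X}\ \supp(\alpha x')\subseteq A\}$. For $\alpha\in A^{\circ}$ we have $\alpha+x\in\supp(\alpha\otimes x)\subseteq A$, so $\alpha\mapsto\alpha+x$ injects $A^{\circ}$ into $A$, and with $d(\alpha)^2\le q^{2x}d(\alpha+x)^2$ this gives
\[
|A^{\circ}|=\sum_{\alpha\in A^{\circ}}d(\alpha)^2\le q^{2x}\sum_{\alpha\in A^{\circ}}d(\alpha+x)^2\le q^{2x}|A|,
\]
hence $|\partial_X^{\inn}(A)|=|A|-|A^{\circ}|\ge(1-q^{2x})|A|\ge(1-q^2)|A|$. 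For the outer part $\partial_X^{\mathrm{out}}(A):=\partial_X(A)\cap A^c$, I would apply the same injection $\alpha\mapsto\alpha+x$ to all of $A$: if $\alpha+x\notin A$ then Frobenius reciprocity gives $\alpha\in\supp((\alpha+x)\otimes x)$, so $\supp((\alpha+x)\otimes x)\nsubseteq A^c$ and $\alpha+x\in\partial_X^{\mathrm{out}}(A)$; thus $A+x\subseteq A\cup\partial_X^{\mathrm{out}}(A)$, and $d(\alpha+x)^2\ge q^{-2x}d(\alpha)^2$ yields $q^{-2x}|A|\le\sum_{\alpha\in A}d(\alpha+x)^2\le|A|+|\partial_X^{\mathrm{out}}(A)|$, i.e.\ $|\partial_X^{\mathrm{out}}(A)|\ge(q^{-2x}-1)|A|$. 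Adding the two estimates gives
\[
|\partial_X(A)|=|\partial_X^{\inn}(A)|+|\partial_X^{\mathrm{out}}(A)|\ge(q^{-2x}-q^{2x})|A|\ge(q^{-2}-q^2)|A|,
\]
the last inequality because $x\ge 1$ and $t\mapsto q^{-2t}-q^{2t}$ is increasing on $[1,\infty)$.

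The step I expect to be the genuine obstacle is making the lower bound \emph{uniform over all} finite generating sets, including ones such as $\{2,3\}$ that do not contain the standard generator $1$; a naive reduction of general $A$ to intervals $\{0,\dots,M\}$ fails for such $X$. The shift-map argument above is designed precisely to sidestep this, since it only ever uses a single positive element $x\in X$ together with the geometric growth $d(k+1)\ge q^{-1}d(k)$. The remaining things to verify carefully are the boundary cases of the injections at $\alpha=0$ and the self-conjugacy $\overline{x}=x$ entering the Frobenius step, but neither presents a real difficulty.
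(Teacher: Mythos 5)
Your proof is correct, but the lower bounds are handled by a genuinely different argument than the paper's. The paper compares generating sets: it first replaces an arbitrary finite generating set $X$ by a singleton $\{\alpha\}$ with $\alpha$ odd (boundaries only shrink when passing to a subset of $X$), and then claims $\partial^{\inn}_{\{1\}}A\subseteq\partial^{\inn}_{\{\alpha\}}A$ and $\partial_{\{1\}}A\subseteq\partial_{\{\alpha\}}A$ on the grounds that $|m-1|$ and $m+1$ occur in $m\otimes\alpha$, so that both uniform constants are attained at $X=\{1\}$ and Lemma~\ref{lemma8} finishes the proof. You use Lemma~\ref{lemma8} only for the upper bounds, and for the lower bounds you prove directly, for every finite $X$ containing some $x\ge 1$ and every finite nonempty $A$, the pointwise inequalities $|\partial^{\inn}_X(A)|\ge(1-q^{2x})|A|$ and $|\partial_X(A)|\ge(q^{-2x}-q^{2x})|A|$, via the shift injection $\alpha\mapsto\alpha+x$, the growth estimate $d(k+1)\ge q^{-1}d(k)$ (your computation $d(k+1)-q^{-1}d(k)=\beta q^{k-1}(q^2-1)>0$ is right since $\beta<0$), Frobenius reciprocity with $\ov{x}=x$, and the disjoint decomposition $\partial_X(A)=\partial^{\inn}_X(A)\cup\bigl(\partial_X(A)\cap A^c\bigr)$; all the boundary cases (including $\alpha=0$) check out. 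Your route buys robustness: the paper's containment $\partial^{\inn}_{\{1\}}A\subseteq\partial^{\inn}_{\{\alpha\}}A$ actually fails as stated for sets containing small elements — for $A=\{0,3\}$ and $\alpha=3$ one has $0\in\partial^{\inn}_{\{1\}}A$ but $\supp(0\otimes 3)=\{3\}\subseteq A$, because $m+1$ need not occur in $m\otimes\alpha$ when $\alpha>2m+1$ — an edge case your argument sidesteps entirely since it never compares two generating sets; it also upgrades the infimum over $A$ to a uniform isoperimetric inequality valid for every finite $A$. What the paper's route buys is brevity: given Lemma~\ref{lemma8}, the reduction to $X=\{1\}$ takes a few lines (and its gap is repairable), whereas you redo the quantitative estimates from scratch.
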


\begin{proof}
	If $q=1$ then  $R(SU(2))$ is amenable and the claim follows from Proposition \ref{prop10}. Assume thus that $q \in (0,1)$.
	
	Let $X$ be any generating set of $R(SU_q(2))$. The fusion rules of $SU_q(2)$ imply that there exists a representation $\alpha\in X$ which corresponds to an odd number in $\ZZ_+\simeq \Irr(SU_q(2))$. Furthermore, $\{\alpha\}$ is also a generating subset of $R(SU_q(2))$. As $\partial_{\{\alpha\}}^{\inn} A\subseteq \partial_X^{\inn} A$ and $\partial_{\{\alpha\}} A\subseteq \partial_X A$ for any finite $A\subseteq \Irr(SU_q(2))$, we can assume that $X=\{\alpha\}$. 
	
	Now, for any $m\in \ZZ_+$ we have
	\begin{equation}\label{eq31}
	m\otimes \alpha  \simeq |m-\alpha | \oplus (|m-\alpha|+2) \oplus\cdots\oplus
	(m+\alpha)
	\end{equation}
	hence $\partial_{\{1\}}^{\inn} A \subseteq \partial_{\{\alpha\}}^{\inn} A$,  $\partial_{\{1\}} A \subseteq \partial_{\{\alpha\}}A$, as $|m-1|$ and $m+1$ appear in the decomposition \eqref{eq31}. Consequently $\Folinn (R(SU_q(2)))=\Folinnj(R(SU_q(2)))$,
$\Fol (R(SU_q(2)))=\Fol_{\{1\}}(R(SU_q(2)))$ asdf and the result follows from Lemma \ref{lemma8}.
\end{proof}

Next, let us record the value of the uniform growth rate for $R(SU_q(2))$.

\begin{proposition}\label{prop14}
Let $q\in \left(0,1\right]$. Then $\omega(R(SU_q(2)))=q^{-2}$.
\end{proposition}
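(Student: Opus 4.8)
The plan is to sandwich $\omega(R(SU_q(2)))$ between $q^{-2}$ from above (using the standard generator) and $q^{-2}$ from below (using the general lower bound of Proposition \ref{prop9}), exploiting that $\omega$ is by definition an infimum over all finite generating sets.

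First I would establish the upper bound by computing the growth rate on the standard generating set $X=\{1\}$. Reading off the fusion rules \eqref{eq16}, one sees that $n\subseteq 1^{\otimes k}$ precisely when $n\le k$ and $n\equiv k\pmod 2$, so that $\ell_{\{1\}}(n)=n$ and hence $S_{\{1\}}(n)=\{n\}$. This gives $|S_{\{1\}}(n)|=d(n)^2=[n+1]_q^2$ by Lemma \ref{dimensions}. Since $[n+1]_q^{1/n}\to q^{-1}$ (for $q\in(0,1)$ the term $q^{-(n+1)}$ dominates, while for $q=1$ one has $[n+1]_1=n+1$), it follows that $|S_{\{1\}}(n)|^{1/n}\to q^{-2}$, and Lemma \ref{lemma3} then yields $\omega_{\{1\}}(R(SU_q(2)))=\max(1,q^{-2})=q^{-2}$. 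Taking the infimum over generating sets gives $\omega(R(SU_q(2)))\le q^{-2}$.

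For the lower bound I would first dispose of the amenable case $q=1$, where $\omega\ge 1=q^{-2}$ is immediate from Lemma \ref{lemma2}, which guarantees $\omega_X\ge 1$ for every generating set. For $q\in(0,1)$ the quantum group $SU_q(2)$ is not of Kac type, so Proposition \ref{prop9} applies. Its constant is $C=\inf\{\Gamma(n)\mid \Gamma(n)>1\}$; since the positive operator $\uprho_n$ has largest eigenvalue $q^{-n}$, we have $\Gamma(n)=\|\uprho_n\|=q^{-n}$, so that $\Gamma(n)>1$ for every $n\ge 1$ and the infimum equals $q^{-1}$, attained at the fundamental representation. Proposition \ref{prop9} then gives $\omega(R(SU_q(2)))\ge C^2=q^{-2}$, which closes the sandwich.

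The step requiring the most care is the lower bound, since it must hold uniformly over all arbitrarily large generating sets; Proposition \ref{prop9} is designed to package exactly this uniformity. If one prefers a self-contained argument avoiding that proposition, the same bound follows directly: any generating set $X$ contains an object $\alpha\ge 1$, the top summand $k\alpha$ of $\alpha^{\otimes k}$ lies in $B_X(k)$, whence $|B_X(k)|\ge d(k\alpha)^2=[k\alpha+1]_q^2$ and therefore $\omega_X\ge q^{-2\alpha}\ge q^{-2}$.
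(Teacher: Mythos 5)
Your proof is correct, but it takes a genuinely different route from the paper's. The paper does not prove Proposition \ref{prop14} on its own: it obtains it as the type $A_1$ special case of Theorem \ref{thm1}, whose proof runs through the general machinery for $q$-deformations (the quantum dimension bounds of Proposition \ref{prop5}, the formula $\omega_Y(R(G_q))=\max_{\lambda\in Y}q^{-4\langle\lambda|\rho\rangle}$ of Proposition \ref{prop7} valid for \emph{every} generating set $Y$, and Corollary \ref{cor1}), merely remarking that an elementary argument in the style of the $SO_q(3)$ computation (Proposition \ref{prop15}) is also possible. You instead sandwich the value directly: the upper bound by computing $|S_{\{1\}}(n)|=[n+1]_q^2$ and invoking Lemma \ref{lemma3}, and the lower bound --- the step that must be uniform over all generating sets --- via Proposition \ref{prop9} with $C=q^{-1}$, or alternatively via the highest-weight observation $k\alpha\subseteq\alpha^{\otimes k}$, which is exactly the lower-bound trick used inside the paper's proof of Proposition \ref{prop7}, specialized to $SU(2)$. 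What each buys: the paper's route proves the statement for all simple $G$ at once, while yours is self-contained, avoids root systems entirely, and stays within the results of Sections 3--4 (with no circularity, since Proposition \ref{prop9}, Lemmas \ref{lemma2}, \ref{lemma3} and \ref{dimensions} are all established before Proposition \ref{prop14}). It is also worth noting that your lower bound is needed in precisely this form: the paper's elementary $SO_q(3)$ lower-bound device, the inclusion $B_X(n)\supseteq B_{\{1\}}(D_X n)$, fails for $SU_q(2)$ because of parity (e.g.\ $X=\{3\}$ generates but $B_{\{3\}}(1)=\{0,3\}\nsupseteq B_{\{1\}}(3)$), so invoking Proposition \ref{prop9} --- whose sharpness for $SU_q(2)$ the paper only records after the fact in Remark \ref{rem3}(i) --- or the highest-weight argument is the correct fix.
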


The above result is a special case of Theorem \ref{thm1}, where we compute the uniform growth rates for more general $q$-deformations. One can also obtain this value by more elementary means; we present such reasoning in the next section, in the case of $SO_q(3)$.\\

The following proposition computes the uniform Kazhdan constant for the regular representation of the fusion algebra 
$R(SU_q(2))$.

\begin{proposition} \label{prop11}
Let $q \in (0,1]$. Then $\Kaz(\rho,R(SU_q(2)))= 1 - \frac{2}{[2]_q}  =\tfrac{(1-q)^2}{q^2+1}$.
\end{proposition}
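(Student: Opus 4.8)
Throughout set $R=R(SU_q(2))$ and recall that $d(n)=[n+1]_q$ by Lemma \ref{dimensions}. The plan is to reduce the computation of $\Kaz(\rho,R)$ to a spectral problem for the single operator $\rho(1)$, and then optimise over generating sets. First I would record that the Kazhdan constant is monotone in the generating set: if $\alpha\in X$ then $\Kaz(\{\alpha\},\rho,R)\le \Kaz(X,\rho,R)$, since for each unit vector the maximum over $X$ dominates the single term at $\alpha$. Exactly as in the proof of Proposition \ref{prop12}, the even elements of $I$ span a proper subring, so every finite generating set must contain an element $\alpha$ corresponding to an odd number; moreover each such singleton $\{\alpha\}$ is itself generating and symmetric (all objects of $R$ are self-conjugate). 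Combining these two observations yields
\[
\Kaz(\rho,R)=\inf_{\alpha\textup{ odd}}\Kaz(\{\alpha\},\rho,R).
\]

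For a fixed odd $\alpha$, since $\ov{\alpha}=\alpha$ the operator $\rho(\alpha)$ is self-adjoint, so the normalisation built into the definition gives
\[
\Kaz(\{\alpha\},\rho,R)=\frac{1}{d(\alpha)}\inf_{\|\xi\|=1}\|(\rho(\alpha)-d(\alpha))\xi\|=\frac{\operatorname{dist}\bigl(d(\alpha),\Sp(\rho(\alpha))\bigr)}{d(\alpha)}.
\]
The structural point is that $\cC_R$ is generated by the class of the fundamental object $1$, with the class of $n$ equal to $p_n(1)$, where $p_n$ is the Chebyshev polynomial of the second kind determined by the fusion recurrence $x\,p_n=p_{n-1}+p_{n+1}$, $p_0=1$, $p_1=x$ (this is precisely \eqref{eq16} with $m=1$). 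As $\rho$ is a $*$-homomorphism we get $\rho(\alpha)=p_\alpha(\rho(1))$. Now $\rho(1)$ is exactly the free Jacobi matrix on $\ell^2(\ZZ_+)$ — the adjacency operator of the half-line, sending $\delta_0\mapsto\delta_1$ and $\delta_n\mapsto\delta_{n-1}+\delta_{n+1}$ for $n\ge 1$ — whose spectrum is the interval $[-2,2]$; by the polynomial spectral mapping theorem $\Sp(\rho(\alpha))=p_\alpha([-2,2])$.

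Writing $t=2\cos\theta$ one has $p_\alpha(2\cos\theta)=\sin((\alpha+1)\theta)/\sin\theta$, and the elementary bound $|\sin(N\theta)|\le N|\sin\theta|$ gives $|p_\alpha|\le \alpha+1$ on $[-2,2]$, with the value $\alpha+1$ attained at $t=2$ (i.e. $p_\alpha(2)=[\alpha+1]_1=\alpha+1$). Hence $\max\Sp(\rho(\alpha))=\alpha+1$, and since $d(\alpha)=[\alpha+1]_q>\alpha+1$ (this follows from Lemma \ref{lemma9}(ii), which forces $[n]_q>n$ for $n\ge 2$), the nearest spectral point to $d(\alpha)$ is $\alpha+1$. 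Therefore
\[
\Kaz(\{\alpha\},\rho,R)=\frac{d(\alpha)-(\alpha+1)}{d(\alpha)}=1-\frac{\alpha+1}{[\alpha+1]_q}.
\]
It remains to maximise $\tfrac{n}{[n]_q}$ over even $n\ge 2$: by Lemma \ref{lemma9}(ii) we have $\tfrac{n}{[n]_q}<\tfrac{2}{[2]_q}$ for $n\ge 3$, so the supremum is attained at $n=2$, i.e.\ at $\alpha=1$. This gives
\[
\Kaz(\rho,R)=1-\frac{2}{[2]_q}=\frac{[2]_q-2}{[2]_q}=\frac{q+q^{-1}-2}{q+q^{-1}}=\frac{(1-q)^2}{q^2+1}.
\]
For $q=1$ the fusion algebra $R(SU(2))$ is amenable, so the regular representation admits almost invariant vectors and $\Kaz(\rho,R)=0$, in agreement with the formula.

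The main obstacle is the spectral identification: recognising $\rho(1)$ as the free Jacobi matrix with spectrum $[-2,2]$ and propagating this through the Chebyshev functional calculus to control $\Sp(\rho(\alpha))$ — in particular its maximum $\alpha+1$ — uniformly in the odd generator $\alpha$. Once this is secured, the reduction to singleton generating sets (via monotonicity and the existence of an odd generator) and the final optimisation over $\alpha$ (via the $q$-number inequality of Lemma \ref{lemma9}) are routine.
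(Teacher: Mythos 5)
Your proof is correct and follows essentially the same route as the paper's: reduce the Kazhdan constant to a spectral-distance computation using self-adjointness of $\rho(\alpha)$ and the identity $\inf_{\|\xi\|=1}\|T\xi-r\xi\|=\operatorname{dist}(r,\Sp(T))$, identify $\Sp(\rho(1))=[-2,2]$, propagate through the Chebyshev functional calculus to get $\Sp(\rho(\alpha))=p_\alpha([-2,2])$ with maximum $\alpha+1$, and optimise over generators via Lemma \ref{lemma9}. The only difference is cosmetic: you quote the standard operator-theoretic fact that the free Jacobi matrix $S+S^*$ on $\ell^2(\ZZ_+)$ has spectrum $[-2,2]$, whereas the paper derives this self-containedly by computing the Haar-state moments of the character (the semicircle law), and you reduce to odd singleton generating sets by monotonicity where the paper uses the inf-max $\geq$ max-inf exchange for an arbitrary generating set; both are valid.
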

\begin{proof}
If $q=1$ then $SU(2)$ is coamenable and $\Kaz(\rho,R(SU(2)))=0$ follows from Proposition \ref{prop:KazhFol} and earlier discussions. Assume $q<1$.	
	
We will use the following standard result, which follows from the spectral theorem \cite[Section 4.3]{SoltanPrimer}: if $T$ is a bounded, self-adjoint operator on a Hilbert space $\mathcal{H}$ and $r\in \RR$, then
\begin{equation}\label{eq14}
\inf_{\xi\in \mathcal{H}, \|\xi\|=1} \|T\xi-r\xi\|=\oon{dist}(\sigma(T),r),
\end{equation}
where $\sigma(T)$ is the spectrum of $T$. 

We begin by analysing properties of the regular representation $\rho$, acting on the Hilbert space $\ell^2(\ZZ_+)$. For clarity of the notation write $\alpha_m=m\in \Irr(SU_q(2))$ and $R=R(SU_q(2))$. Observe that
	\begin{equation}\label{eq15}
	\rho(\alpha_1)\delta_n=\begin{cases}
	\delta_1, & n=0, \\
	\delta_{n-1}+\delta_{n+1}, & n\ge 1,
	\end{cases}
	\end{equation}
	hence $\rho(\alpha_1)$ is the sum of unilateral shift and its adjoint, in particular $\rho(\alpha_1)^*=\rho(\alpha_1)$. Let us define von Neumann algebra
\[
	\M=\{\rho(\alpha_1)\}''=
	\ov{\lin}^{\,\swot}\{\rho(\alpha_1)^k\mid k\in \ZZ_+\}\subseteq \B(\ell^2(\ZZ_+))
	\]
	and a vector state $	\omega=\omega_{\delta_0}\in \M_*$. Equation \eqref{eq15} implies that $\delta_0$ is a cyclic vector for $\M$, and since $\M$ is commutative, $\delta_0$ is also separating. Consequently, $\omega$ is a faithful normal state. Observe that for $k\ge 0$ the number
	\[
	\omega(\rho(\alpha_1)^k)=
	\omega(\rho(\alpha_1^{\otimes\, k}))=
	\la \delta_0 | \rho(\alpha_1^{\otimes k}) \delta_0\ra 
	\]
	is equal to the multiplicity of the trivial representation in $\alpha_1^{\otimes k}$, hence
	\begin{equation}\label{eq13}
	\omega(\rho(\alpha_1)^k)=h(\chi(\alpha_1^{\otimes k}))=
	h(\chi(\alpha_1)^k),
	\end{equation}
where $h$ is the Haar integral and $\chi(U)$ is the character of representation $U$, see \cite{NeshveyevTuset}. Let $P\colon \RR\rightarrow \CC$ be a polynomial function, $P=\sum_{k=0}^{n} c_k \boldsymbol{t}^k$. By \cite[Proposition 6.2.11]{Timmermann} and \eqref{eq13} we have
	\[
	\omega ( P(\rho(\alpha_1)))=
	\sum_{k=0}^{n} c_k \omega( \rho(\alpha_1)^k)=
	\sum_{k=0}^{n} c_k h( \chi(\alpha_1)^{k})=
	h(P(\chi(\alpha_1)))=
	\tfrac{1}{2\pi} \int_{-2}^{2} P(t) \sqrt{4-t^2} \md t.
	\]
By continuity we obtain $\omega ( f(\rho(\alpha_1)))=
	\tfrac{1}{2\pi} \int_{-2}^{2} f(t) \sqrt{4-t^2} \md t$ for any $f\in \mrm{C}_0(\RR)$, and it follows that $\sigma(\rho(\alpha_1))=\left[-2,2\right]$. \\

In what follows, we will use Chebyshev polynomials of the second kind. More precisely, let $U_n(x)\,(n\in\ZZ_+)$ be the Chebyshev polynomials of the second kind (see \cite[Chapter 18]{NIST}), and let $P_n(x)=U_n(\tfrac{x}{2})$ be their rescaled versions. We have $P_0(x)=1, P_1(x)=x$ and $x P_n(x)=P_{n+1}(x)+P_{n-1}(x)\,(n\in\NN)$ \cite[Section 18.5, 18.9]{NIST}. Comparing this recurrence relation with the fusion rule of $SU_q(2)$ (equation \eqref{eq16}), we see that $\alpha_m=P_m(\alpha_1)$ in $R(SU_q(2))$.\\

Consider now an arbitrary generating set $X\subseteq \ZZ_+=\Irr(SU_q(2))$. By definition of the Kazhdan constants, spectral mapping theorem and formula \eqref{eq14}, we have
\[\begin{split}
\Kaz(X,\rho,R)&=\inf_{\xi\in \ell^2(\ZZ_+), \|\xi\|=1} \max_{m\in X} \tfrac{ \|\rho(\alpha_m) \xi - d(m)\xi \|}{d(m)}\ge 
 \max_{m\in X} \inf_{\xi\in \ell^2(\ZZ_+), \|\xi\|=1}\tfrac{ \|\rho(\alpha_m) \xi - d(m)\xi \|}{d(m)}\\
 &=
  \max_{m\in X} \tfrac{1}{d(m)}
  \oon{dist}\bigl(\sigma(\rho(\alpha_m) ) , d(m)\bigr)=
  \max_{m\in X} \tfrac{1}{d(m)}
  \oon{dist}\bigl(P_m(\sigma(\rho(\alpha_1) )) , d(m)\bigr)\\
  &=
  \max_{m\in X} \tfrac{1}{d(m)}
  \oon{dist}\bigl(P_m(\left[-2,2\right]) , d(m)\bigr).
 \end{split}\]

Using \cite[Section 18.7, 18.14]{NIST} we see $|P_m(x)|\le m+1=P_m(2)\,(m\in\ZZ_+, -2\le x \le 2)$. Consequently,
\begin{equation}\label{eq17}
\Kaz(X,\rho,R)\ge 
\max_{m\in X} \tfrac{d(m) -m-1}{d(m)} =
\max_{m\in X} \bigl( 1- \tfrac{m+1}{[m+1]_q}\bigr).
\end{equation}
Any generating set must contain a non-zero $m \in \mathbb{Z}_+$. Thus Lemma \ref{lemma9} implies 
 \[
\Kaz(X,\rho,R) \ge 1-\tfrac{2}{[2]_q}=
\frac{q+q^{-1}-2}{q+q^{-1}}=
\frac{q^2+1-2q}{q^2+1}=
\tfrac{(1-q)^2}{q^2+1}.
\]
Repeating the reasoning above, we see that this lower bound is attained for the generating set $X=\{1\}$, which ends the proof.
\end{proof}

Note that we in particular obtain for $R=R(SU_q(2))$, $q \in (0,1)$,
\[
 \Kaz(\rho,R)^2=\bigl(\tfrac{q+q^{-1}-2}{q+q^{-1}}\bigr)^2=
1-\tfrac{4q}{1+q^2} + \tfrac{4 q^2}{(1+q^2)^2}<1-q^2 = \Fol(R),
\]
which shows that the inequality in Proposition \ref{prop:KazhFol} can be sharp.

\medskip

Fix a matrix $F\in GL_N(\CC)\,(N \ge 2)$ such that $F \ov{F}\in \{\pm\I\}$. Let $O_F^+$ be the associated free orthogonal quantum group \cite[Example 1.1.7]{NeshveyevTuset}. It was shown in \cite{BanicaOrth} that $R(O_F^+) = R(SU_q(2))$, for a parameter $q\in (0,1]$ uniquely determined by the matrix $F$ via $q+q^{-1}=\Tr(F^*F)$.
In particular, considering the free orthogonal quantum groups $O_N^+$ (by taking $F=\I\in GL_N(\CC)$), we obtain the following corollary.


\begin{corollary} If $N\in \NN, N \geq 2$, then for the corresponding free orthogonal quantum group we obtain
\[\begin{split}
\Folinn(R(O_N^+)) &= 1 - \tfrac{1}{4} (N-\sqrt{N^2-4})^{2},\\
\Fol(R(O_N^+)) &=
\frac{4}{(N-\sqrt{N^2-4}\,)^{2}}
- \tfrac{1}{4} (N-\sqrt{N^2-4})^{2}, \\
\omega(R(O_N^+)) & = \frac{4}{(N-\sqrt{N^2-4}\,)^{2}}  , \\
\Kaz(\rho,R(O_N^+)) & = 1- \frac{2}{N}. 
\end{split}\]
\end{corollary}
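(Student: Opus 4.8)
The plan is to specialise the formulas already obtained for $R(SU_q(2))$ in Propositions \ref{prop12}, \ref{prop14} and \ref{prop11} to the value of $q$ corresponding to $O_N^+$. By the result of Banica quoted just above the statement, we have $R(O_N^+) = R(SU_q(2))$, where $q \in (0,1]$ is determined by $q + q^{-1} = \Tr(F^*F)$ with $F = \I \in GL_N(\CC)$. Since in this case $F^*F = \I$, we get $\Tr(F^*F) = N$ and hence the defining relation $q + q^{-1} = N$.

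First I would solve the quadratic $q^2 - Nq + 1 = 0$, taking the root lying in $(0,1]$, namely $q = \tfrac{1}{2}\bigl(N - \sqrt{N^2-4}\bigr)$; its conjugate root is $q^{-1} = \tfrac{1}{2}\bigl(N + \sqrt{N^2-4}\bigr)$. From this it is immediate that $q^2 = \tfrac{1}{4}\bigl(N - \sqrt{N^2-4}\bigr)^2$ and $q^{-2} = 4\bigl(N - \sqrt{N^2-4}\bigr)^{-2}$. Substituting these into the identities $\Folinn(R(SU_q(2))) = 1 - q^2$ and $\Fol(R(SU_q(2))) = q^{-2} - q^2$ of Proposition \ref{prop12}, and into $\omega(R(SU_q(2))) = q^{-2}$ of Proposition \ref{prop14}, yields the first three displayed equalities directly.

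For the Kazhdan constant, the cleanest route is to recall from the statement and proof of Proposition \ref{prop11} that $\Kaz(\rho, R(SU_q(2))) = 1 - \tfrac{2}{[2]_q}$, and simply to observe that $[2]_q = q^{-1} + q = N$; this gives $\Kaz(\rho, R(O_N^+)) = 1 - \tfrac{2}{N}$ with no further computation. There is no genuine obstacle here, since the corollary is a pure specialisation: the only points requiring the slightest care are the bookkeeping of signs when extracting the root $q \in (0,1]$ of the quadratic, and the observation $[2]_q = N$ that makes the Kazhdan constant collapse to the particularly clean form $1 - 2/N$.
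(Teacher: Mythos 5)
Your proposal is correct and matches the paper exactly: the corollary is stated there as an immediate specialisation of Propositions \ref{prop12}, \ref{prop14} and \ref{prop11} via $R(O_N^+)=R(SU_q(2))$ with $q+q^{-1}=\Tr(F^*F)=N$ for $F=\I$, which is precisely your substitution $q=\tfrac{1}{2}(N-\sqrt{N^2-4})$ together with the observation $[2]_q=q+q^{-1}=N$. Nothing is missing.
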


\section{Fusion algebra of $SO_q(3)$}
\label{Sec:SO}

In this section we will treat the fusion algebra associated with the quantum group $SO_q(3)$, with $q \in (0,1]$. This happens to be also, with a suitably chosen dimension function, the fusion algebra associated with the quantum permutation groups. The arguments will roughly follow those of the last section, so we will often simply  indicate  computational differences.

Once again we have $I=\ZZ_+$, $0=e$ and the fusion algebra $R(SO_q(3))$ is generated by $1$, with the fusion rules given this time by
\[ m \otimes n = |m-n| \oplus (|m-n|+1) \oplus \cdots \oplus (m+n), \;\;\; m,n \in \ZZ_+. \]
An arbitrary dimension function $\tilde{d}\colon \ZZ_+\to [1, \infty)$ compatible with the above fusion rules is thus determined by $\tilde{d}(1)\geq 3$ (as before, the lower bound follows from Proposition \ref{prop:amenmin}); for the quantum dimension of $SO_q(3)$ with $q \in (0,1)$ we have specifically $d(1)=[3]_q =\tfrac{q^3 - q^{-3}}{q- q^{-1}}= q^2 + q^{-2}+1$. Again we can realise all possible dimension functions on the ring above using some $q \in (0,1]$. In fact, we can study $R(SO_q(3))$ as a sub-fusion algebra of $R(SU_q(2))$ generated by `even' representations of $SU_q(2)$. This is clear from the fusion rules above and the next lemma.

Record in particular the recurrence formula:
\[ d(n) d(1) = d(n+1) + d(n) + d(n-1), \;\; n \in \NN,\]
which implies the following version of Lemma \ref{dimensions}.



\begin{lemma} \label{dimensions2} 
 Let $q \in (0,1)$ and let $d\colon \ZZ_+\to [1,\infty)$ denote the quantum dimension function of $R(SO_q(3))$.  
	Then 
	\[ d(n) = [2n+1]_q = \alpha q^{-2n} + \beta q^{2n}, \;\;\; n \in \ZZ_+,\]
	where $\alpha, \beta \in \mathbb{R}$ are determined by 
	\[ \alpha  + \beta  = 1, \;\;\; \alpha q^{-2} + \beta q^2 = [3]_q.  \]
	In particular one can show that $\alpha>0$, $\beta <0$.
\end{lemma}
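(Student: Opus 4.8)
The plan is to establish the claimed formula $d(n) = [2n+1]_q$ by the same two-pronged strategy used in Lemma \ref{dimensions}: first verify that the $q$-numbers $[2n+1]_q$ satisfy the relevant recurrence and initial conditions, and then independently solve the recurrence in closed form to obtain the expression $\alpha q^{-2n} + \beta q^{2n}$. For the first part I would simply check the recurrence $d(n)d(1) = d(n+1) + d(n) + d(n-1)$ directly on the sequence $[2n+1]_q$. Using the defining formula $[x]_q = \tfrac{q^{-x}-q^{x}}{q^{-1}-q}$ together with $d(1) = [3]_q = q^2 + q^{-2} + 1$, this reduces to the identity $[2n+1]_q(q^2 + q^{-2} + 1) = [2n+3]_q + [2n+1]_q + [2n-1]_q$, which follows by expanding each $q$-number and cancelling. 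Since $d(0) = [1]_q = 1$ and $d(1) = [3]_q$ match the initial data, and the sequence is determined by the second-order recurrence together with two initial values, this pins down $d(n) = [2n+1]_q$ for all $n \in \ZZ_+$.

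For the closed-form part, I would rewrite the recurrence $d(n+1) = (d(1)-1)d(n) - d(n-1)$ in the matrix form
\[
\begin{pmatrix} d(n+1) \\ d(n) \end{pmatrix}
= \begin{pmatrix} d(1)-1 & -1 \\ 1 & 0 \end{pmatrix}
\begin{pmatrix} d(n) \\ d(n-1) \end{pmatrix},
\]
exactly as in the proof of Lemma \ref{dimensions}. The characteristic polynomial of the transition matrix is $t^2 - (d(1)-1)t + 1$, and since $d(1) - 1 = q^2 + q^{-2}$, its roots are $t = q^{-2}$ and $t = q^{2}$. Hence $d(n) = \alpha q^{-2n} + \beta q^{2n}$ for constants $\alpha, \beta$ fixed by the initial conditions $\alpha + \beta = d(0) = 1$ and $\alpha q^{-2} + \beta q^2 = d(1) = [3]_q$, which is precisely the stated linear system.

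It remains to verify the sign claim $\alpha > 0$, $\beta < 0$. Solving the two-by-two linear system gives $\alpha = \tfrac{[3]_q - q^2}{q^{-2}-q^2}$ and $\beta = \tfrac{q^{-2}-[3]_q}{q^{-2}-q^2}$. Since $0 < q < 1$ we have $q^{-2} - q^2 > 0$, so the signs are governed by the numerators: using $[3]_q = q^2 + q^{-2} + 1$ one computes $[3]_q - q^2 = q^{-2} + 1 > 0$ and $q^{-2} - [3]_q = -(q^2+1) < 0$, giving $\alpha > 0$ and $\beta < 0$ as asserted.

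None of these steps presents a genuine obstacle; the only point requiring mild care is the recurrence identity for the $q$-numbers, where one must keep track of the exponents correctly when expanding $[2n\pm1]_q$ and $[2n+3]_q$. I expect this to be the most computational (though still routine) step, and the rest follows the template of Lemma \ref{dimensions} essentially verbatim, with $q$ replaced by $q^2$ throughout.
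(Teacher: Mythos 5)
Your proposal is correct and follows essentially the same route as the paper: the paper likewise solves the recurrence $d(n+1) = (d(1)-1)d(n) - d(n-1)$ via the same transition matrix (after substituting $\tilde{q}=q^2$ to reduce to the template of Lemma \ref{dimensions}), and also notes, as you do, the alternative of directly checking that $[2n+1]_q$ satisfies the initial and recurrence conditions. Your sign verification by solving the two-by-two system is a trivial variant of the paper's explicit formulas $\alpha = \tfrac{q^{-1}}{q^{-1}-q}$, $\beta = -\tfrac{q}{q^{-1}-q}$.
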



\begin{proof}
Set $\tilde{q} =q^2$, $\tilde{d}=d(1)-1$.
Note that  $\tilde{d}=\tilde{q}+\tilde{q}^{-1}$ . 
The values $d(n)$ are determined by the recurrence relation 
	\[ \tilde{d} d(n) = d(n+1) + d(n-1), \]
	valid for all $n \in \NN$. 
	As in the proof of Lemma \ref{dimensions} we deduce the existence of $\alpha, \beta$ as above (note that the `recurrence matrix' is identical, only the initial conditions change).
	
Alternatively, we can simply observe that if we set $d(n) = [2n+1]_q$, $n \in \ZZ_+$ we obtain the coefficients which satisfy both the required initial and recurrence conditions. 
	
Either description shows that $\alpha = \frac{q^{-1}}{q^{-1} - q}$, $\beta = - \frac{q}{{q}^{-1} - q}$, so that the last statement follows. 
	
\end{proof}

Note that  the analogous formula for the dimensions in the proof of \cite[Theorem 4.1(3)]{BanicaVergnioux} contains a typo.

\begin{lemma}
	Let $q \in (0,1)$ and let $d\colon \ZZ_+\to [1,\infty)$ denote the quantum dimension function of $R(SO_q(3))$. The functions  $f\colon\NN \ni M \mapsto \frac{d(M)^2}{\sum_{m=1}^M d(m)^2} \in \mathbb{R}$  and $g\colon\NN \ni M \mapsto \frac{d(M+1)^2}{\sum_{m=1}^M d(m)^2} \in \mathbb{R}$ are strictly decreasing.	
\end{lemma}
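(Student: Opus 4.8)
The plan is to repeat, essentially verbatim, the argument used for $R(SU_q(2))$ in Lemma~\ref{lemma7}, since by Lemma~\ref{dimensions2} the dimension function here is $d(n)=\alpha q^{-2n}+\beta q^{2n}$ with $\alpha>0$ and $\beta<0$; this is exactly the $SU_q(2)$ formula with $q$ replaced by $q^2$, so the same algebra applies after that substitution.

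First I would reduce monotonicity of $f$ to one elementary inequality. The statement $f(M+1)<f(M)$ is equivalent to
\[
d(M+1)^2\sum_{m=1}^{M} d(m)^2 < d(M)^2\sum_{m=1}^{M+1} d(m)^2,
\]
and re-indexing $\sum_{m=1}^{M+1}d(m)^2 = d(1)^2+\sum_{m=1}^{M}d(m+1)^2$ lets me write the difference of the two sides as
\[
d(M)^2 d(1)^2 + \sum_{m=1}^{M}\bigl(d(M)^2 d(m+1)^2 - d(M+1)^2 d(m)^2\bigr).
\]
Hence it suffices to prove the pointwise inequality $d(M+1)\,d(k)\le d(M)\,d(k+1)$ for $k\in\{1,\dots,M\}$, since then every summand is non-negative while the leading term $d(M)^2 d(1)^2$ is strictly positive.

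The crucial step is this pointwise inequality, which I would verify by direct substitution. Writing $u=q^2\in(0,1)$ so that $d(n)=\alpha u^{-n}+\beta u^{n}$, the $\alpha^2$ and $\beta^2$ contributions to $d(M+1)d(k)-d(M)d(k+1)$ cancel, and collecting the surviving cross terms gives
\[
d(M+1)d(k)-d(M)d(k+1)=\alpha\beta\,(u^{-1}-u)\bigl(u^{-(M-k)}-u^{M-k}\bigr).
\]
Since $\alpha\beta<0$, $u^{-1}-u>0$, and $u^{-(M-k)}-u^{M-k}\ge 0$ (because $M\ge k$ and $u\in(0,1)$), the right-hand side is $\le 0$, which is precisely the required inequality and matches the computation in Lemma~\ref{lemma7} up to the replacement of $q$ by $q^2$.

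For $g$ I would run the identical reduction: $g(M+1)<g(M)$ amounts to positivity of
\[
d(M+1)^2 d(1)^2 + \sum_{m=1}^{M}\bigl(d(M+1)^2 d(m+1)^2 - d(M+2)^2 d(m)^2\bigr),
\]
which follows from the same pointwise inequality with $M$ replaced by $M+1$, namely $d(M+2)\,d(m)\le d(M+1)\,d(m+1)$. I do not expect any genuine obstacle here; the only point needing care is tracking the exponents through the factor of $2$ coming from $d(n)=[2n+1]_q$, but the cancellation pattern is exactly as in the $SU_q(2)$ computation.
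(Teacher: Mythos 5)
Your proposal is correct and is exactly the paper's proof: the paper disposes of this lemma in one line, saying the argument is identical to that of Lemma \ref{lemma7} because by Lemma \ref{dimensions2} the dimension function again has the form $d(n)=\alpha u^{-n}+\beta u^{n}$ with $\alpha>0$, $\beta<0$ (here $u=q^2$). You have simply written out that identical argument in full, and all the steps (the reduction to $d(M+1)d(k)\le d(M)d(k+1)$, the cancellation of the $\alpha^2$ and $\beta^2$ terms, and the sign analysis of $\alpha\beta\,(u^{-1}-u)(u^{-(M-k)}-u^{M-k})$) check out.
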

\begin{proof}
	Identical as the proof of Lemma \ref{lemma7}, as the function $d$ has the same form (c.f.\ Lemma \ref{dimensions} and Lemma \ref{dimensions2}).
\end{proof}


\begin{proposition}
 Let $q\in (0,1]$. Then 
\[\begin{split}
 \Folinn(R(SO_q(3)))&=1-q^{4},\\
\Fol(R(SO_q(3)))&=q^{-4} - q^{4}.
\end{split}\]
\end{proposition}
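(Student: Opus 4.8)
The strategy is to mirror the treatment of $R(SU_q(2))$ in Proposition \ref{prop12}, reducing the computation of the uniform F\o lner constants to the single generating set $X=\{1\}$, and then to carry out the explicit limit computation as in Lemma \ref{lemma8}, using the dimension formula $d(n)=[2n+1]_q=\alpha q^{-2n}+\beta q^{2n}$ from Lemma \ref{dimensions2}. As always the case $q=1$ is settled immediately by amenability and Proposition \ref{prop10}, so we assume $q\in(0,1)$.

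First I would establish that $X=\{1\}$ is optimal. Given an arbitrary finite generating set $X\subseteq\ZZ_+$, the fusion rules $m\otimes n=|m-n|\oplus(|m-n|+1)\oplus\cdots\oplus(m+n)$ of $SO_q(3)$ force $X$ to contain some $\alpha\neq 0$, and $\{\alpha\}$ alone already generates (since $1\subseteq\alpha\otimes\alpha$). Because $\partial_{\{\alpha\}}^{\inn}A\subseteq\partial_X^{\inn}A$ and $\partial_{\{\alpha\}}A\subseteq\partial_XA$, we may assume $X=\{\alpha\}$. Then, exactly as in Proposition \ref{prop12}, the decomposition of $m\otimes\alpha$ contains both $|m-\alpha|$ (hence in particular the neighbours produced by $X=\{1\}$) so that $\partial_{\{1\}}^{\inn}A\subseteq\partial_{\{\alpha\}}^{\inn}A$ and $\partial_{\{1\}}A\subseteq\partial_{\{\alpha\}}A$; thus the infima over $A$ for $X=\{\alpha\}$ are bounded below by those for $X=\{1\}$, reducing everything to computing $\Folinnj(R(SO_q(3)))$ and $\Fol_{\{1\}}(R(SO_q(3)))$.

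Next I would compute the constants for $X=\{1\}$, following Lemma \ref{lemma8} verbatim with $q$ replaced by $q^2$ in the dimension formula. For any finite nonempty $A$ with $M=\sup A$, setting $A'=\{0,\dotsc,M\}$ gives $|A|\le|A'|$, $\partial^{\inn}_X A'=\{M\}\subseteq\partial_X^{\inn}A$, and $\partial_XA'=\{M,M+1\}$, so monotonicity of the functions $f$ and $f+g$ (the $SO_q(3)$-analogue of Lemma \ref{lemma7}) shows the infimum is the limit over intervals $\{0,\dotsc,M\}$. Using $d(n)=\alpha q^{-2n}+\beta q^{2n}$, the dominant terms as $M\to\infty$ are governed by $q^{-2}$ replaced by $q^{-4}$: the inner boundary limit is
\[
\Folinnj(R(SO_q(3)))=\lim_{M\to\infty}\frac{d(M)^2}{\sum_{k=0}^M d(k)^2}=\frac{\alpha^2}{-\alpha^2 q^{-4}(1-q^{-4})^{-1}}=1-q^4,
\]
and the full boundary limit, using $d(M)^2+d(M+1)^2$ in the numerator, yields the extra factor $(1+q^{-4})$, giving
\[
\Fol_{\{1\}}(R(SO_q(3)))=(1-q^4)(1+q^{-4})=q^{-4}-q^4.
\]

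The computations here are entirely routine given the structural parallel with Section \ref{Sec:SU}; the only genuine content is the reduction step, and even that is the direct transcription of Proposition \ref{prop12}. The one point demanding a little care is checking that the fusion rules of $SO_q(3)$ (which now produce \emph{consecutive} integers rather than only those of the same parity) still guarantee that a single generator $\alpha$ suffices and that $X=\{1\}$ dominates from below --- this is immediate since $|m-\alpha|$ and $|m-\alpha|+1,\dotsc$ all appear, so no parity obstruction arises (in contrast to $SU_q(2)$, where one had to select an \emph{odd} representation). I expect no substantive obstacle; the proof is a near-verbatim adaptation with $q^2$ in place of $q$.
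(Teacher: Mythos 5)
Your proposal is correct and follows essentially the same route as the paper: the paper's proof consists precisely of passing to $\tilde q = q^2$, invoking the $SO_q(3)$ analogues of Lemma \ref{dimensions} and Lemma \ref{lemma7}, and then repeating the arguments of Lemma \ref{lemma8} (computation over the intervals $\{0,\dotsc,M\}$) and Proposition \ref{prop12} (reduction of an arbitrary generating set to $\{1\}$ via the fusion rules). Your only addition --- the observation that the reduction is even simpler here because the $SO_q(3)$ fusion rules produce all intermediate integers, so any nonzero $\alpha$ generates and no parity selection is needed --- is exactly what "repeat the same arguments" amounts to in the paper.
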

\begin{proof}
For $q=1$ the claim follows by amenability.	
	
If $q \in (0,1)$ we first pass to $\tilde{q} = q^2$ and then use the last two lemmas to repeat the same arguments  as in the proofs of Lemma \ref{lemma8} and Proposition \ref{prop12}. Note that the specific form of constants $\alpha, \beta$ plays no role in the latter proofs.
\end{proof}

In this case it is also very easy to give an elementary computation  of the uniform growth rate.

\begin{proposition}\label{prop15}
Let $q\in (0,1]$. Then  $\omega(R(SO_q(3)))=q^{-4}$.	
\end{proposition}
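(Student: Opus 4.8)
The plan is to follow the same two-step strategy as for $SU_q(2)$: first compute the growth rate for the standard generating set $X=\{1\}$ to obtain the upper bound $\omega(R(SO_q(3)))\le q^{-4}$, and then show that \emph{every} finite generating set $X$ satisfies $\omega_X(R(SO_q(3)))\ge q^{-4}$, so that the infimum defining the uniform rate is attained at $\{1\}$. The single computational fact driving everything is the asymptotic
\[
\lim_{k\to\infty} d(k)^{1/k}=q^{-2},
\]
valid for all $q\in(0,1]$: for $q<1$ it follows from $d(k)=[2k+1]_q=\alpha q^{-2k}+\beta q^{2k}$ with $\alpha>0$ (Lemma \ref{dimensions2}), while for $q=1$ we have $d(k)=2k+1$ and $(2k+1)^{1/k}\to 1=q^{-2}$.

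For the upper bound I would note that with $X=\{1\}$ the largest label occurring in $1^{\otimes k}$ is $k$, so $\ell_{\{1\}}(m)=m$ and hence $B_{\{1\}}(n)=\{0,1,\dots,n\}$ and $|B_{\{1\}}(n)|=\sum_{k=0}^n d(k)^2$. Since $d$ is strictly increasing, $d(n)^2\le \sum_{k=0}^n d(k)^2\le (n+1)\,d(n)^2$; taking $n$-th roots and invoking the asymptotic above gives $\omega_{\{1\}}(R(SO_q(3)))=\lim_n|B_{\{1\}}(n)|^{1/n}=q^{-4}$.

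For the lower bound, fix an arbitrary finite generating set $X$ and put $x_0=\max X$; since a generating set must contain some label $\ge 1$, we have $x_0\ge 1$. Iterating the fusion rule $m\otimes n=|m-n|\oplus\cdots\oplus(m+n)$ shows that the top label $nx_0$ always appears in $x_0^{\otimes n}$, so $nx_0\in B_X(n)$ and therefore $|B_X(n)|\ge d(nx_0)^2$. Taking $n$-th roots and applying the asymptotic along the subsequence $k=nx_0$ yields $\omega_X(R(SO_q(3)))\ge \lim_n d(nx_0)^{2/n}=q^{-4x_0}\ge q^{-4}$, where the limit defining $\omega_X$ exists by Lemma \ref{lemma2}. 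Combining the two bounds gives $\omega(R(SO_q(3)))=q^{-4}$.

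I expect no serious obstacle here: the argument is genuinely elementary, and the only point requiring a little care is the \emph{uniformity} over generating sets in the lower bound. The resolution is the observation that self-tensoring the largest generator produces a label growing linearly in $n$, whose squared quantum dimension grows like $q^{-4nx_0}$; this single family of elements already forces $\omega_X\ge q^{-4}$ no matter how $X$ is chosen. (Alternatively, the lower bound is a special case of Proposition \ref{prop9} with $C=q^{-2}$, but the direct computation above avoids any representation-theoretic input.)
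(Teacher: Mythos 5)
Your proof is correct, and its overall skeleton matches the paper's: compute the growth rate at the standard generating set $\{1\}$ and then show every finite generating set does at least as well, so the infimum is attained there. The two arguments diverge only in how the uniform lower bound is obtained. The paper sets $D_X=\max X$ and proves the ball inclusion $B_X(n)\supseteq B_{\{1\}}(D_X n)$ — this exploits the fact that the $SO_q(3)$ fusion rules proceed in unit steps, so tensor powers of $D_X$ sweep out \emph{all} labels up to $nD_X$ — and concludes $\omega_X(R)\ge \omega_{\{1\}}(R)^{D_X}$. You instead track only the single top component $nx_0\subseteq x_0^{\otimes n}$ and let its quantum dimension alone carry the estimate, $|B_X(n)|\ge d(nx_0)^2$, giving $\omega_X\ge q^{-4x_0}\ge q^{-4}$. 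Your variant needs less from the fusion rules (only that $m+n$ occurs in $m\otimes n$), so it would work verbatim for fusion rules that skip labels, such as those of $SU_q(2)$; it is in effect a hands-on instance of the paper's Proposition \ref{prop9} (and of the lower-bound step in Proposition \ref{prop7}), as you note yourself. The paper's inclusion argument is more specific to $SO_q(3)$ but yields the slightly stronger intermediate inequality $\omega_X\ge\omega_{\{1\}}^{D_X}$. Finally, at the generating set $\{1\}$ the paper passes through spheres, $|S_{\{1\}}(n)|=[2n+1]_q^2$, together with Lemma \ref{lemma3}, whereas you sandwich the ball, $d(n)^2\le|B_{\{1\}}(n)|\le (n+1)d(n)^2$; these are equivalent elementary computations, and your treatment has the minor virtue of handling $q=1$ by the same limit rather than as a separate amenability remark.
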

\begin{proof}
Let $X \subseteq \ZZ_+$ be a finite generating set for $R=R(SO_q(3))$ and set $D_X = \max\{m \in X\} \in \NN$. 
It is easy to deduce from the fusion rules that for any $n \in \NN$ we have
\[B_X(n)\supseteq B_{\{1\}}(D_X n).\]
This implies that 
\[ \omega_X(R) = \lim_{n \in \NN} \sqrt[n]{|B_X(n)|} \geq \lim_{n \in \NN} \sqrt[n]{|B_{\{1\}}(D_X n)|} = 
\omega_{\{1\}}(R)^{D_X}.\]	
Thus $\omega(R) = \omega_{\{1\}}(R)$. But $S_{\{1\}}(n) = n$ for every $n \in \NN$, so that by Lemma \ref{dimensions2}  for $q\in (0,1)$ we have
\[ \lim_{n \in\NN} \sqrt[n]{|S_{\{1\}}(n)|}= \lim_{n \in\NN} \sqrt[n]{[2n+1]_q^2}
= \lim_{n \in\NN} \left(\sqrt[n]{\frac{q^{-2n+1} - q^{2n+1}}{q^{-1}- q}}\right)^2 = q^{-4}. \]	
Lemma \ref{lemma3} ends the proof (the case of $q=1$ follows similarly).	

\end{proof}

Finally we compute the uniform Kazhdan constant for the regular representation.

\begin{proposition} \label{prop:KazhdanSO}
	Let $q \in (0,1]$. Then $\Kaz(\rho,R(SO_q(3)))= 1 - \frac{3}{[3]_q} = 1-\tfrac{3}{q^2 +q^{-2}+1}$.
\end{proposition}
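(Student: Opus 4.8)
The plan is to follow the proof of Proposition \ref{prop11} line by line, recording only the computational changes. First I would dispose of the boundary case $q=1$: then $SO(3)$ is classical and $R(SO(3))$ is amenable, so by the discussion preceding Section \ref{Sec:SU} the regular representation admits almost invariant vectors, whence $\Kaz(X,\rho,R(SO(3)))=0$ for every generating $X$ and thus $\Kaz(\rho,R(SO(3)))=0=1-\tfrac{3}{[3]_1}$. So from now on assume $q\in(0,1)$ and write $\alpha_m=m\in\Irr(SO_q(3))$, $R=R(SO_q(3))$.

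Next I would analyse the right regular representation $\rho$ on $\ell^2(\ZZ_+)$. From the fusion rules one reads off $\rho(\alpha_1)\delta_0=\delta_1$ and $\rho(\alpha_1)\delta_n=\delta_{n-1}+\delta_n+\delta_{n+1}$ for $n\ge1$; in particular $\rho(\alpha_1)$ is self-adjoint. Exactly as in Proposition \ref{prop11} I set $\M=\{\rho(\alpha_1)\}''$ and $\omega=\omega_{\delta_0}$, note that $\delta_0$ is cyclic and separating for the abelian algebra $\M$ so that $\omega$ is faithful and normal, and observe that $\omega(\rho(\alpha_1)^k)=h(\chi(\alpha_1)^k)$ with $h$ the Haar integral. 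The new input is that, viewing $R$ as the even sub-fusion-algebra of $R(SU_q(2))$, the fundamental object satisfies $\alpha_1=P_2(\alpha_1^{SU_q(2)})$ with $P_2(x)=x^2-1$. Since the Haar integral of $SO_q(3)$ is the restriction of that of $SU_q(2)$, whose character distribution is the semicircle law, I obtain for $f\in\mrm{C}_0(\RR)$
\[
\omega(f(\rho(\alpha_1)))=\tfrac{1}{2\pi}\int_{-2}^2 f(t^2-1)\sqrt{4-t^2}\,\md t,
\]
the pushforward of the semicircle distribution under $t\mapsto t^2-1$. As $\omega$ is faithful, $\sigma(\rho(\alpha_1))$ is the support of this measure, namely $[-1,3]$.

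The rest is Chebyshev bookkeeping. Writing $y=x^2-1$ I have $\alpha_m=Q_m(\alpha_1)$ where $Q_m(y)=P_{2m}(x)$ (equivalently $Q_0=1$, $Q_1(y)=y$ and $Q_{m+1}=(y-1)Q_m-Q_{m-1}$ from the $SO_q(3)$ recurrence), and since $|P_{2m}(x)|\le 2m+1=P_{2m}(2)$ for $x\in[-2,2]$, I get $|Q_m(y)|\le 2m+1$ on $[-1,3]$ with $Q_m(3)=2m+1$. Using the spectral mapping theorem, formula \eqref{eq14} and $d(m)=[2m+1]_q\ge 2m+1$, this yields, just as before,
\[
\Kaz(X,\rho,R)\ge \max_{m\in X}\tfrac{\oon{dist}(Q_m([-1,3]),[2m+1]_q)}{[2m+1]_q}=\max_{m\in X}\Bigl(1-\tfrac{2m+1}{[2m+1]_q}\Bigr).
\]
Every generating set contains some $m\ge1$, and from the inequality $\tfrac{[j+1]_q}{[j]_q}>\tfrac{j+1}{j}$ established inside the proof of Lemma \ref{lemma9} I telescope $\tfrac{[2m+1]_q}{[3]_q}=\prod_{j=3}^{2m}\tfrac{[j+1]_q}{[j]_q}\ge\tfrac{2m+1}{3}$, with equality exactly at $m=1$; hence $1-\tfrac{2m+1}{[2m+1]_q}\ge 1-\tfrac{3}{[3]_q}$ for all $m\ge1$ and therefore $\Kaz(X,\rho,R)\ge 1-\tfrac{3}{[3]_q}$ for every generating $X$. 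For the matching upper bound I take $X=\{1\}$, where the maximum is trivial and \eqref{eq14} gives $\Kaz(\{1\},\rho,R)=\tfrac{1}{[3]_q}\oon{dist}([-1,3],[3]_q)=1-\tfrac{3}{[3]_q}$, attaining the bound.

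I expect the spectrum computation to be the only genuine difficulty: $\rho(\alpha_1)$ is a Jacobi operator whose bulk is the Toeplitz matrix with spectrum $[-1,3]$, but its defect at the corner (the vanishing $(0,0)$-entry) could a priori contribute an eigenvalue outside this interval. Rather than rule this out by hand, I handle it through the faithful-state/moment argument above, which pins the distribution of $\rho(\alpha_1)$ to the pushforward of the $SU_q(2)$ semicircle law and hence forces $\sigma(\rho(\alpha_1))=[-1,3]$ with no stray point spectrum.
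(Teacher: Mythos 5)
Your proof is correct, and its overall skeleton coincides with the paper's: dispose of $q=1$ by amenability, identify the spectrum of the image of the generator under the regular representation, then combine formula \eqref{eq14} with the Chebyshev bound $|P_{2m}|\le 2m+1$ on the relevant interval and the $q$-number monotonicity from Lemma \ref{lemma9} to get the lower bound $1-\tfrac{2m+1}{[2m+1]_q}\ge 1-\tfrac{3}{[3]_q}$, attained at $X=\{1\}$.

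Where you genuinely diverge is in the spectrum identification, and the difference is worth noting. The paper realises $\rho_{SO_q(3)}(\beta_m)$ as the restriction of $\rho_{SU_q(2)}(\alpha_{2m})$ to the invariant subspace $\mc{H}=\ov{\lin}\{\delta_{2m}\mid m\in\ZZ_+\}$, shows that the restriction map $\mc{A}\ni a\mapsto a\restriction_{\mc{H}}$ is an isomorphism of von Neumann algebras (via the faithful state $\omega_{\delta_0}$), and then invokes spectral permanence in unital subalgebras to transfer the $SU_q(2)$ spectrum $P_{2m}([-2,2])$ wholesale; it never recomputes any moments. You instead rerun the moment argument of Proposition \ref{prop11} directly for $SO_q(3)$: the distribution of $\rho(\alpha_1)$ under the faithful vector state is the pushforward of the semicircle law under $t\mapsto t^2-1$, hence $\sigma(\rho(\alpha_1))=[-1,3]$, and the higher generators are handled by spectral mapping with the polynomials $Q_m$ determined by $Q_m(x^2-1)=P_{2m}(x)$. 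Your route is more self-contained (no invariant-subspace isomorphism, no appeal to spectral permanence) and correctly flags and neutralises the one real danger in a direct approach, namely a possible stray eigenvalue from the corner defect of the Jacobi matrix; the paper's route avoids all measure-theoretic bookkeeping by recycling the $SU_q(2)$ computation verbatim. Both are valid, and the intermediate conclusions agree, since $Q_m([-1,3])=P_{2m}([-2,2])$.
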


\begin{proof}
	If $q=1$, then $SO(3)$ is coamenable and $\Kaz(\rho,R(SO(3)))=0$ follows from Proposition \ref{prop:KazhFol} and earlier discussions.

Fix $q\in (0,1)$, and to avoid confusion, write $\rho_{SO_q(3)}$ and $\rho_{SU_q(2)}$ for the right regular representations of respectively, $R(SO_q(3))$ and $R(SU_q(2))$. We will also write $\alpha_m=m\in \ZZ_+=\Irr(SU_q(2))$ and $\beta_m=m\in \ZZ_+=\Irr(SO_q(3))$. Note that $R(SO_q(3))$ is a sub-fusion algebra of $R(SU_q(2))$ via $\beta_m\mapsto \alpha_{2m}$, $m \in \ZZ_+$. Next, consider the von Neumann subalgebra
\[
\mc{A}=\{\rho_{SU_q(2)} (\alpha_{2m})\mid m\in\ZZ_+\}''\subseteq 
\rho_{SU_q(2)} (R(SU_q(2)))''
\]
and observe that $\mc{H}=\ov{\lin}\{\delta_{2m}\mid m\in\ZZ_+\}\subseteq \ell^2(\Irr(SU_q(2)) )$ is an invariant subspace for $\mc{A}$. Unitary isomorphism $\mc{H}\ni \delta_{2m}\mapsto \delta_{m}\in \ell^2(\Irr(SO_q(3)) )$ and the fusion rules imply that $\mc{A}\restriction_{\mc{H}}$ is isomorphic to $\rho_{SO_q(3)}(R(SO_q(3)))''$ via $\rho_{SU_q(2)}(\alpha_{2m})\restriction_{\mc{H}}\mapsto \rho_{SO_q(3)}(\beta_m)$ for $m\in\ZZ_+$. The restriction mapping $\mc{A}\ni a\mapsto a\restriction_{\mc{H}}\in \mc{A}\restriction_{\mc{H}}$ is also an isomorphism. This follows from the fact that $\nu=\omega_{\delta_0}\in(\mc{A}\restriction_{\mc{H}})_*$ satisfies $\nu(a\restriction_{\mc{H}})=\omega(a)$, where $\omega=\omega_{\delta_0}\in \mc{A}_*$ is a faithful normal state (see the proof of Proposition \ref{prop11}). Combining these properties with the fact that spectrum of an element $x$ in a unital \cst-algebra is equal to its spectrum in any unital subalgebra containing $x$ (\cite[Corollary I.5.7]{Davidson}), we see that the spectrum of $\rho_{SU_q(2)}(\alpha_{2m})$ in $\rho_{SU_q(2)}(R(SU_q(2)))''$ is equal to the spectrum of $\rho_{SO_q(3)}(\beta_m)$ in $\rho_{SO_q(3)}(R(SO_q(3)))''$.\\

Now, let $X\subseteq \ZZ_+=\Irr(SO_q(3))$ be a generating set. Arguing as in the proof of Proposition \ref{prop11}, we calculate
\begin{align*}
\Kaz\bigl(X,\rho&_{SO_q(3)},R(SO_q(3))\bigr)=
\inf_{\xi\in \ell^2(\ZZ_+), \|\xi\|=1}
\max_{m\in X} \tfrac{ \|\rho_{SO_q(3)}(\beta_m) \xi-d(\beta_m)\xi\|}{d(\beta_m)}\\
&\ge 
\max_{m\in X}\inf_{\xi\in \ell^2(\ZZ_+), \|\xi\|=1}\tfrac{ \|\rho_{SO_q(3)}(\beta_m) \xi-d(\beta_m)\xi\|}{d(\beta_m)}=
\max_{m\in X}\tfrac{1}{d(\beta_m)}
\oon{dist}\bigl( \sigma ( \rho_{SO_q(3)}(\beta_m) ), 
d(\beta_m)\bigr)\\
&=
\max_{m\in X}\tfrac{1}{d(\alpha_{2m})}
\oon{dist}\bigl( \sigma ( \rho_{SU_q(2)}(\alpha_{2m}) ), 
d(\alpha_{2m})\bigr)=
\max_{m\in X}
\bigl(1- \tfrac{2m+1}{[2m+1]_q}\bigr) \ge 1-\tfrac{3}{[3]_q}.
\end{align*}
Similarly to the case of $SU_q(2)$, we see that this lower bound is attained for $X=\{\beta_1\}$, so that
\[
\Kaz\bigl(\rho_{SO_q(3)},R(SO_q(3))\bigr)=
1-\tfrac{3}{[3]_q}=1-
\tfrac{3}{q^2+q^{-2}+1}.
\]
\end{proof}

One should note that the fusion ring studied in this section can be also realised as the one related to the TLJ $\mc{G}^{\lambda}$ category, i.e.\ the category of bimodules related to  an extremal inclusion  $\N\subseteq \M$ of type $\oon{II}_1$ factors with finite index $[\M\colon \N]=\lambda^{-1}\ge 4$ and principal graph $A_{\infty}$ (with $\lambda^{-\frac{1}{2}} = q+q^{-1}$; see the beginning of the proof of \cite[Theorem 7.1]{PopaVaes}), or to the quantum permutation groups $S_N^+$ for $N\geq 4$ (see \cite{BBC}). In particular we obtain the following corollary.


\begin{corollary} If $N\in \NN, N \geq 4$, then for the corresponding quantum permutation group we obtain
	\[\begin{split}
	\Folinn(R(S_N^+)) &= 1 - \tfrac{(N- 2 -\sqrt{N^2-4N})^{2}}{4} ,\\
	\Fol(R(S_N^+)) &= 
	\frac{4}{(N- 2 -\sqrt{N^2-4N})^{2}}
	- \tfrac{(N- 2 -\sqrt{N^2-4N})^{2}}{4}, \\
	\omega(R(S_N^+)) & = 	\frac{4}{(N- 2 -\sqrt{N^2-4N})^{2}}  , \\
	\Kaz(\rho,R(S_N^+)) & = 1- \frac{3}{N-1}. 
	\end{split}\]
\end{corollary}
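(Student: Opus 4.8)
The plan is to obtain all four formulas by specialising the results already proved for $R(SO_q(3))$, once the deformation parameter $q$ is expressed in terms of $N$. As recalled just before the statement, for $N\ge 4$ the fusion algebra $R(S_N^+)$ coincides, together with its quantum dimension function, with $R(SO_q(3))$ for a suitable $q\in(0,1]$; so the only genuine task is to pin down this $q$ and substitute. First I would fix the dictionary between $N$ and $q$. Since $S_N^+$ is of Kac type, its quantum dimension function agrees with the classical one, and the fundamental $N$-dimensional magic-unitary representation decomposes as the trivial representation together with the irreducible labelled by $1\in\ZZ_+=\Irr(S_N^+)$. Hence the generator has dimension $d(1)=N-1$. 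Matching this against the quantum dimension $d(1)=[3]_q=q^2+q^{-2}+1$ of the generator of $R(SO_q(3))$ forces
\[
[3]_q=N-1,\qquad\text{equivalently}\qquad q^2+q^{-2}=N-2,\qquad\text{i.e.}\qquad q+q^{-1}=\sqrt{N}.
\]

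Next I would solve this relation for $q$. Writing $x=q^2$, the equation $x+x^{-1}=N-2$ becomes $x^2-(N-2)x+1=0$, whose two roots are mutually reciprocal and given by $\tfrac{1}{2}\bigl((N-2)\pm\sqrt{(N-2)^2-4}\,\bigr)$. Since $(N-2)^2-4=N^2-4N\ge 0$ precisely for $N\ge 4$, both roots are real, and as $q\in(0,1]$ I must select the one lying in $(0,1]$, namely
\[
q^2=\frac{N-2-\sqrt{N^2-4N}}{2}.
\]
From this I read off $q^4=\tfrac{1}{4}(N-2-\sqrt{N^2-4N})^2$ and, using $q^2\,q^{-2}=1$, also $q^{-4}=\tfrac{4}{(N-2-\sqrt{N^2-4N})^2}$.

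Finally I would substitute these expressions into the formulas already established for $R(SO_q(3))$: the F\o lner constants $\Folinn(R(SO_q(3)))=1-q^4$ and $\Fol(R(SO_q(3)))=q^{-4}-q^4$ from the proposition computing the F\o lner constants of $R(SO_q(3))$, the growth rate $\omega(R(SO_q(3)))=q^{-4}$ from Proposition \ref{prop15}, and $\Kaz(\rho,R(SO_q(3)))=1-\tfrac{3}{[3]_q}$ from Proposition \ref{prop:KazhdanSO}. The first three yield verbatim the displayed values for $\Folinn$, $\Fol$ and $\omega$, while the last gives $1-\tfrac{3}{N-1}$ immediately from $[3]_q=N-1$. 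There is no serious obstacle here: the computation is pure substitution, and the only points requiring care are the correct identification of the generator's dimension $d(1)=N-1$ (which rests on the Kac-type property and the decomposition of the magic unitary) and the choice of the root of the quadratic lying in $(0,1]$. The hypothesis $N\ge 4$ is exactly what guarantees that $q\in(0,1]$ is well defined, with $q=1$ at $N=4$.
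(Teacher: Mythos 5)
Your proposal is correct and is essentially the paper's own (implicit) argument: the corollary is stated there as an immediate consequence of the identification $R(S_N^+)=R(SO_q(3))$ with $q+q^{-1}=\sqrt{N}$ (equivalently $[3]_q=N-1$), followed by exactly the substitution into the $SO_q(3)$ results that you carry out. Your derivation of the dictionary from $d(1)=N-1$, together with the choice of root $q^2=\tfrac{1}{2}\bigl(N-2-\sqrt{N^2-4N}\bigr)\in(0,1]$ (well defined precisely for $N\ge 4$), matches the paper's parameter convention $\lambda^{-1/2}=q+q^{-1}$ with index $\lambda^{-1}=N$.
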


\section{Growth rate for fusion algebras of general $q$-deformations}\label{sec:qdef}

Let $G$ be a compact Lie group, which we assume to be simply connected and semisimple. $G$ comes together with a number of auxilliary objects: complexified Lie algebra $\mf{g}$, maximal torus $T\subseteq G$, its complexified Lie algebra $\mf{h}$, weight lattice $\bf{P}\subseteq\mf{h}^*$ and its positive cone $\bf{P}^+$, root system $\Phi\subseteq \bf{P}$, positive roots $\Phi^+\subseteq \Phi$, simple roots $\{\alpha_1,\dotsc,\alpha_r\}\subseteq \Phi^+$, fundamental weights $\{\varpi_1,\dotsc,\varpi_r\}\subseteq \bf{P}^+$, Weyl vector $\rho\in \bf{P}^+$, Weyl group $W$ and the longest element $w_\circ\in W$. We equip $\mf{h}^*$ with the $W$-invariant inner product given by the rescaling of the Killing form on $\mf{g}$; we follow the usual convention and normalise it so that $\la \alpha|\alpha\ra=2$ for short roots $\alpha\in\Phi$. We have $\la \varpi_i|\alpha_j\ra = \tfrac{\la \alpha_j|\alpha_j\ra}{2} \delta_{i,j}$ for $1\le i,j\le r$. The number $r$ is called the rank of $\Phi$, and by the root space decomposition (\cite[Section II.8]{Humphreys}) we have $\dim(G)=r+s$, where $s=\# \Phi$ is the number of roots. We will need the well known facts that $w_\circ\Phi^+=-\Phi^+$, $w_\circ=w_\circ^{-1}$ and $w_\circ \rho=-\rho$ (\cite[Exercise 20.2]{Bump}).

Fix $0<q<1$. With $G$ and $q$ one associates compact quantum group $G_q$, which can be thought of as a $q$-deformation of the classical Lie group $G$ (for the construction see\footnote{One should be aware that certain differences in conventions appear in the literature. This  will however not  play any role in our work.} \cite[Section 2.4]{NeshveyevTuset} or \cite[Section 5.3]{DeCommer}, \cite[Section 4]{KrajczokSoltan}). As in the classical case, irreducible representations of $G_q$ are indexed by ${\bf{P}}^+$. The representation theory of the  quantum group $G_q$ has the same classical dimension function,  fusion rules and conjugacy operation as that of $G$. In particular  $\ov{\lambda}\simeq -w_\circ \lambda$ for $\lambda\in \bf{P}^+$. For $\lambda\in \bf{P}^+$, or more generally for any representation $\lambda$, we will denote by $\Pi(\lambda)\subseteq \bf{P}$ the set of weights of $\lambda$. It is invariant under the action of $W$ (\cite[Chapter 21]{Bump}).

In this section we will compute the uniform growth rate $\omega(R(G_q))$ for arbitrary $q$-deformation (see Remark \ref{rem1} and Theorem \ref{thm1}), where as usual we equip the fusion algebra $R(G_q)$ with the quantum dimension function.  We can use this together with Proposition \ref{prop1} to arrive at an upper bound on the F{\o}lner constant $\Folinn(R(G_q))$. 

For the classical dimension function we have $\omega_X(R(G_q),\dim)=1$ for any finite generating set $X$, hence $\omega(R(G_q),\dim)=1$; see \cite[Theorem 2.1]{BanicaVergnioux} and the following lemma. 

\begin{lemma}\label{lemma4}
Let $X\subseteq \bf{P}^+$ be a finite generating set for $R(G_q)$. The function $\ZZ_+\ni n\mapsto \# B_X(n)\in \NN$ has polynomial growth.
\end{lemma}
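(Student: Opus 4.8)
The plan is to trap $B_X(n)$ inside a Euclidean ball of radius linear in $n$, and then to count lattice points. First I would translate the combinatorial length condition into a statement about weights. Recall that $\ell_X(\lambda)\le n$ means precisely that $\lambda\subseteq x_1\cdots x_n$ for some $x_1,\dotsc,x_n\in X$, i.e.\ the irreducible $U^\lambda$ occurs as a subrepresentation of the tensor product $x_1\otimes\cdots\otimes x_n$. Since the highest weight $\lambda$ is in particular a weight of $U^\lambda$, we obtain $\lambda\in\Pi(x_1\otimes\cdots\otimes x_n)$; here we use that the representation theory (fusion rules, weights) of $G_q$ coincides with the classical one, as recalled at the start of this section, so we may freely work with the weight theory of $G$.

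Next I would use additivity of weights under tensor products, namely $\Pi(x_1\otimes\cdots\otimes x_n)\subseteq\{\nu_1+\cdots+\nu_n\mid \nu_i\in\Pi(x_i)\}$. Setting $C=\max_{x\in X}\max_{\nu\in\Pi(x)}\|\nu\|$ — finite because $X$ is finite and each $\Pi(x)$ is finite — the triangle inequality shows that every weight of $x_1\otimes\cdots\otimes x_n$ has norm at most $nC$. In particular every $\lambda\in B_X(n)$ satisfies $\|\lambda\|\le nC$, so that
\[
B_X(n)\subseteq\{\lambda\in\bf{P}^+\mid \|\lambda\|\le nC\}\subseteq\{\mu\in\bf{P}\mid\|\mu\|\le nC\}.
\]

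Finally I would count lattice points. Since $\bf{P}$ is a full-rank lattice in the $r$-dimensional real vector space $\mf{h}^*$, the number of its points in a Euclidean ball of radius $R$ is $O(R^r)$, by the standard volume-packing argument: disjoint translates of a fixed fundamental domain, centred at the lattice points lying in the ball, all fit inside a slightly enlarged ball, so their number is bounded by a constant multiple of $R^r$. Taking $R=nC$ yields $\# B_X(n)=O(n^r)$, the desired polynomial bound. I do not expect a serious obstacle here: the only points requiring attention are the legitimacy of importing the classical weight combinatorics into the quantum setting, which is already recorded in this section, and making the lattice-point count uniform in $R$, both of which are routine.
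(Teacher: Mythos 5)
Your proof is correct, but it follows a different route from the paper. The paper disposes of this lemma by citation: it invokes the proof of \cite[Theorem 2.1]{BanicaVergnioux}, which establishes polynomial growth for one particular generating set, and then transfers the conclusion to an arbitrary finite generating set by the classical Milnor argument (any element of a second generating set has bounded length with respect to the first, so $B_{X'}(n)\subseteq B_X(Dn)$ for a suitable constant $D$, and polynomial growth is preserved). You instead give a direct, self-contained argument that works for every generating set at once: trap the highest weights of $B_X(n)$ in a Euclidean ball of radius $nC$ via additivity of weights under tensor products, then count lattice points of the rank-$r$ weight lattice to get $\# B_X(n)=O(n^r)$. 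Both steps are legitimate in the quantum setting, since $G_q$ has the same fusion rules and weight combinatorics as $G$ (as recorded at the start of the section); indeed, the paper itself uses exactly your device---$\lambda\in B_Y(n)$ implies $\lambda\in\Pi(\lambda_1\otimes\cdots\otimes\lambda_k)$ with weights adding up---in the proof of Proposition \ref{prop7}, just for bounding quantum dimensions rather than for counting. What each approach buys: the paper's is shorter and leans on existing literature plus a general change-of-generators principle, while yours avoids both external references and the Milnor reduction, and yields an explicit exponent (the rank $r$). One cosmetic point: $\ell_X(\lambda)\le n$ means $\lambda\subseteq x_1\cdots x_m$ for some $m\le n$ (one cannot always pad to exactly $n$ factors), but this is harmless since your norm bound $\|\lambda\|\le mC\le nC$ is monotone in the number of factors.
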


\begin{proof}
See the proof of \cite[Theorem 2.1]{BanicaVergnioux} for a particular choice of $X$; the general case follows from an easy standard argument, going back already to \cite[Lemma 1]{Milnor}.
\end{proof}

Recall from Section \ref{sec:discrete/compact} the notion of $q$-numbers $[x]_q$ 
$(x\in \CC,0<q<1)$. For the calculation of growth we need a way of calculating the quantum dimension $d(\lambda)$ of $\lambda\in \bf{P}^+$. First we recall a precise expression, which resembles the classical Weyl dimension formula (\cite[Theorem 22.4]{Bump}). For the proof we refer to \cite[Lemma 1]{ZhangGouldBracken}.

\begin{proposition} \label{prop6}
For $\lambda\in {\bf{P}}^+$ we have $d (\lambda)=\prod_{\alpha\in\Phi^+}\frac{ [\la \lambda+\rho |\alpha\ra ]_q}{[\la\rho|\alpha\ra ]_q}$.
\end{proposition}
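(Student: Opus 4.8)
The plan is to derive the formula from the quantum Weyl character formula together with the Weyl denominator identity, exploiting the fact that for $0<q<1$ the representations of $G_q$ carry exactly the same weight multiplicities $m_\lambda(\mu)$ ($\mu\in\Pi(\lambda)$) as their classical counterparts. The first step is to realise $d(\lambda)$ as a specialisation of the formal character. Writing $V_\lambda=\bigoplus_\mu V_\lambda(\mu)$ for the weight decomposition of the representation space of $\lambda$, the canonical positive intertwiner $\uprho_\lambda$ acts on the weight space $V_\lambda(\mu)$ as multiplication by $q^{\la 2\rho|\mu\ra}$; this is the structural input coming from the construction of $G_q$. It is consistent with the normalisation $\Tr(\uprho_\lambda)=\Tr(\uprho_\lambda^{-1})$ recorded in Subsection~\ref{sec:discrete/compact}, since $w_\circ\Pi(\lambda)=\Pi(\lambda)$ with preservation of multiplicities and $\la 2\rho|w_\circ\mu\ra=-\la 2\rho|\mu\ra$, so the weighted multiset is symmetric under $\mu\mapsto-\mu$; it is also consistent with $\|\uprho_\lambda\|=\Gamma(\lambda)=q^{-\la\lambda|2\rho\ra}$ (attained at the lowest weight $w_\circ\lambda$), matching Proposition~\ref{prop5}. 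Hence
\[
d(\lambda)=\Tr(\uprho_\lambda)=\sum_{\mu} m_\lambda(\mu)\, q^{\la 2\rho|\mu\ra},
\]
which is the formal character $\sum_\mu m_\lambda(\mu)e^\mu$ evaluated under the ring homomorphism $\Psi\colon e^\mu\mapsto q^{\la 2\rho|\mu\ra}$.

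The second step feeds the Weyl character formula $\sum_\mu m_\lambda(\mu)e^\mu=A_{\lambda+\rho}/A_\rho$, where $A_\nu=\sum_{w\in W}\sgn(w)e^{w\nu}$, into $\Psi$ and uses a symmetry of these alternating sums. By $W$-invariance of the inner product, by $\sgn(w^{-1})=\sgn(w)$, and by substituting $w\mapsto w^{-1}$ one computes
\[
\Psi(A_{\lambda+\rho})=\sum_{w\in W}\sgn(w)\,q^{\la 2\rho| w(\lambda+\rho)\ra}=\sum_{w\in W}\sgn(w)\,q^{\la 2(\lambda+\rho)| w\rho\ra},
\]
so that $\Psi(A_{\lambda+\rho})$ is nothing but $A_\rho$ evaluated under the dual homomorphism $e^\mu\mapsto q^{\la 2(\lambda+\rho)|\mu\ra}$, while $\Psi(A_\rho)$ is $A_\rho$ under $e^\mu\mapsto q^{\la 2\rho|\mu\ra}$. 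Both are now factorised by the Weyl denominator formula $A_\rho=\prod_{\alpha\in\Phi^+}(e^{\alpha/2}-e^{-\alpha/2})$, giving
\[
\Psi(A_{\lambda+\rho})=\prod_{\alpha\in\Phi^+}\bigl(q^{\la\lambda+\rho|\alpha\ra}-q^{-\la\lambda+\rho|\alpha\ra}\bigr),\qquad
\Psi(A_\rho)=\prod_{\alpha\in\Phi^+}\bigl(q^{\la\rho|\alpha\ra}-q^{-\la\rho|\alpha\ra}\bigr).
\]
Taking the quotient, the factor $(q-q^{-1})$ occurring in each of the $\#\Phi^+$ terms of numerator and denominator cancels, and rewriting $q^{x}-q^{-x}=(q-q^{-1})[x]_q$ yields precisely $\prod_{\alpha\in\Phi^+}[\la\lambda+\rho|\alpha\ra]_q/[\la\rho|\alpha\ra]_q$, as asserted.

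The routine ingredients — the validity of the character formula for $G_q$ and the Weyl denominator identity — are classical and can simply be quoted. The step that deserves genuine care, and which I expect to be the main obstacle, is the first one: justifying that the abstract canonical positive intertwiner $\uprho_\lambda$ of Subsection~\ref{sec:discrete/compact} coincides with the grouplike element $q^{2\rho}$ of the $q$-deformation, i.e.\ that it acts as $q^{\la 2\rho|\mu\ra}$ on $V_\lambda(\mu)$. This requires unwinding the construction of $G_q$ and carefully matching normalisation conventions; once this identification is in place, the remainder is the short symmetry-plus-denominator manipulation above. This is also essentially the route taken in \cite{ZhangGouldBracken}.
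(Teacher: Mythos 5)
Your proof is correct: the paper itself does not prove Proposition \ref{prop6} but merely cites \cite[Lemma 1]{ZhangGouldBracken}, and your derivation --- realising $d(\lambda)=\Tr(\uprho_\lambda)$ as the specialisation of the formal character under $e^\mu\mapsto q^{\la 2\rho|\mu\ra}$, then applying the Weyl character formula, the $W$-invariance substitution $w\mapsto w^{-1}$, and the Weyl denominator identity --- is essentially the argument of that reference, as you yourself note. The one structural fact your argument rests on, namely that $\uprho_\lambda$ is the image of $K_{2\rho}$ under the representation $U_q\mf{g}\to\B(\msf{H}_\lambda)$ and hence acts as $q^{\la \mu|2\rho\ra}$ on the $\mu$-weight space, is exactly what the paper quotes from \cite[Lemma 4.4]{KrajczokSoltan} in its proof of Proposition \ref{prop5}, so it is legitimately available as an external input and introduces no circularity.
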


While the above formula  gives us an exact answer, it is not always easy to use in practice. In Proposition \ref{prop5} we will use it to derive a more practical bound on the quantum dimension.

\begin{lemma}\label{lemma6}
Let $\lambda\in {\bf{P}^+}$ and $\Pi(\lambda)\subseteq \bf{P}$ be the set of weights of $\lambda$. Recall that $\rho$ denotes the Weyl vector. We have
\[
\max_{\mu\in \Pi(\lambda)} \la \mu|\rho\ra=\la \lambda|\rho\ra \quad\textnormal{and}\quad
 \min_{\mu\in \Pi(\lambda)} \la \mu|\rho\ra=-\la \lambda|\rho\ra .
\]
\end{lemma}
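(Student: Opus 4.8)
The plan is to compute the maximum directly from the standard partial order on weights, and then to obtain the minimum from the maximum by exploiting the $W$-invariance of $\Pi(\lambda)$ together with the relation $w_\circ\rho=-\rho$ recalled at the start of this section.

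First I would record that $\rho$ is strictly dominant: since $\rho=\sum_{i=1}^{r}\varpi_i$ and $\la\varpi_i|\alpha_j\ra=\tfrac{\la\alpha_j|\alpha_j\ra}{2}\delta_{i,j}$, we get $\la\rho|\alpha_i\ra=\tfrac{\la\alpha_i|\alpha_i\ra}{2}>0$ for every simple root $\alpha_i$. Next I would invoke the standard fact from highest weight theory that every $\mu\in\Pi(\lambda)$ satisfies $\lambda-\mu=\sum_{i=1}^{r}c_i\alpha_i$ for some non-negative integers $c_i$. Pairing with $\rho$ then yields
\[
\la\mu|\rho\ra=\la\lambda|\rho\ra-\sum_{i=1}^{r}c_i\la\alpha_i|\rho\ra\le\la\lambda|\rho\ra,
\]
with equality for $\mu=\lambda\in\Pi(\lambda)$. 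This establishes the first claimed identity.

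For the second, I would use that $\Pi(\lambda)$ is $W$-invariant, that $w_\circ$ acts as a linear isometry, and that $w_\circ\rho=-\rho$. Since $\mu\mapsto w_\circ\mu$ is a bijection of $\Pi(\lambda)$ onto itself, I would then compute
\[
\min_{\mu\in\Pi(\lambda)}\la\mu|\rho\ra
=\min_{\mu\in\Pi(\lambda)}\la w_\circ\mu|\rho\ra
=\min_{\mu\in\Pi(\lambda)}\la\mu|w_\circ\rho\ra
=-\max_{\mu\in\Pi(\lambda)}\la\mu|\rho\ra
=-\la\lambda|\rho\ra,
\]
where the third equality uses $w_\circ\rho=-\rho$ and the final one is the identity proved above.

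The argument is entirely elementary and I do not anticipate any genuine obstacle; the only point requiring care is the correct invocation of the weight-order fact (that $\lambda-\mu$ lies in the non-negative integer span of the simple roots for every weight $\mu$ of the highest-weight representation $\lambda$), which is classical and applies verbatim to $G_q$ since its representation theory shares the weight structure of $G$.
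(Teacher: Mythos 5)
Your proof is correct and follows essentially the same route as the paper: the maximum comes from the standard fact that $\lambda-\mu$ lies in the non-negative span of the (simple/positive) roots paired against the dominant vector $\rho$, and the minimum comes from the $W$-invariance of $\Pi(\lambda)$ together with $w_\circ\rho=-\rho$ and $w_\circ=w_\circ^{-1}$. The only cosmetic difference is that you deduce the minimum formally from the maximum by the substitution $\mu\mapsto w_\circ\mu$, whereas the paper re-runs the positive-root decomposition for $\lambda-w_\circ\mu$; the two computations are identical in substance.
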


\begin{proof}
Take any $\mu\in \Pi(\lambda)$. Then $\lambda-\mu$ is a sum of positive roots (\cite[Proposition 21.3]{Humphreys}), say $\lambda-\mu=\sum_{k=1}^{K} \beta_k$ with $\beta_k\in \Phi^+$. Since $\rho$ belongs to the positive Weyl chamber, we have $\la\beta_k|\rho\ra\ge 0$ and consequently
\[
\la \mu |\rho\ra =
\la \lambda|\rho\ra - \sum_{k=1}^{K} \la \beta_k |\rho \ra \le 
\la \lambda|\rho\ra.
\]
Since $\lambda\in\Pi(\lambda)$, this ends the proof of the first part. For the second, take again any $\mu\in \Pi(\lambda)$ and write $\lambda-w_\circ \mu=\sum_{l=1}^{L}\gamma_l$ with $\gamma_l\in \Phi^+$. Then $w_\circ\lambda- \mu=\sum_{l=1}^{L}w_\circ \gamma_l$ and
\[
\la \mu|\rho\ra= \la 
w_\circ\lambda |\rho\ra - \sum_{l=1}^{L} 
\la w_\circ\gamma_l|\rho\ra=
-\la 
\lambda |\rho\ra + \sum_{l=1}^{L} 
\la\gamma_l|\rho\ra\ge -\la \lambda|\rho\ra.
\]
Since $w_\circ\lambda\in \Pi(\lambda)$ and $\la w_\circ\lambda|\rho\ra=-\la\lambda|\rho\ra$, this ends the proof.
\end{proof}

Recall that $s=\#\Phi=\dim(G)-r$ is the number of roots. The parameter $\Gamma(\alpha)$ for an irreducible representation $\alpha$ of a compact quantum group was introduced before Proposition \ref{prop9}

\begin{proposition}\label{prop5}
For $\lambda\in \bf{P}^+$ we have $\Gamma(\lambda)=q^{-\la \lambda |2\rho\ra}$ and
\[
q^{-\la \lambda|2 \rho\ra }\le 
d(\lambda)\le 
(q^{-1}-q)^{-s/2} q^{-\la \rho|2 \rho\ra } \,
q^{- \la \lambda|2 \rho\ra }.
\]
\end{proposition}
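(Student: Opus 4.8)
The statement splits into two essentially independent parts, which I would treat separately.

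For the formula $\Gamma(\lambda)=q^{-\la\lambda|2\rho\ra}$ the plan is to use the explicit diagonalisation of $\uprho_\lambda$ for the $q$-deformation $G_q$. Recall that $\uprho_\lambda$ is the positive invertible intertwiner, and for $G_q$ it acts diagonally with respect to the weight decomposition $\msf{H}_\lambda=\bigoplus_{\mu\in\Pi(\lambda)}\msf{H}_\lambda(\mu)$, namely as the scalar $q^{-\la\mu|2\rho\ra}$ on the weight space $\msf{H}_\lambda(\mu)$ (this is the standard description of the modular/pivotal element of $U_q(\mf{g})$; see \cite[Section 2.4]{NeshveyevTuset}, and note that for $SU_q(2)$ it recovers $\uprho_1=\Diag(q^{-1},q)$). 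Hence $\oon{Sp}(\uprho_\lambda)=\{q^{-\la\mu|2\rho\ra}\mid\mu\in\Pi(\lambda)\}$, and since $0<q<1$ the function $x\mapsto q^{-x}$ is increasing, so
\[
\Gamma(\lambda)=\|\uprho_\lambda\|=\max_{\mu\in\Pi(\lambda)} q^{-\la\mu|2\rho\ra}=q^{-2\max_{\mu\in\Pi(\lambda)}\la\mu|\rho\ra}=q^{-\la\lambda|2\rho\ra},
\]
where the last equality is exactly Lemma \ref{lemma6}.

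For the two-sided estimate on $d(\lambda)$ I would work directly from the Weyl-type dimension formula of Proposition \ref{prop6}, bounding each factor separately. The key elementary observation is that for real numbers $a\ge b\ge 1$ one has
\[
q^{\,b-a}\le \frac{[a]_q}{[b]_q}\le \frac{q^{-a}}{q^{-1}-q}.
\]
The right inequality follows from $[a]_q\le q^{-a}/(q^{-1}-q)$ together with $[b]_q\ge[1]_q=1$; the left inequality reduces, after clearing the (positive) denominators, to $q^a\le q^{2b-a}$, i.e.\ to $a\ge b$. I would then apply this with $a=\la\lambda+\rho|\alpha\ra$ and $b=\la\rho|\alpha\ra$ for each $\alpha\in\Phi^+$, so that $a-b=\la\lambda|\alpha\ra\ge0$ and, using the normalisation $\la\alpha|\alpha\ra\ge2$ together with the integrality $\la\rho|\alpha^\vee\ra\ge1$, also $b=\la\rho|\alpha\ra\ge1$. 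Taking the product over $\Phi^+$ and using $\sum_{\alpha\in\Phi^+}\alpha=2\rho$ and $\#\Phi^+=s/2$ collapses the exponents: the lower bound becomes $\prod_{\alpha\in\Phi^+}q^{-\la\lambda|\alpha\ra}=q^{-\la\lambda|2\rho\ra}$, while the upper bound becomes $(q^{-1}-q)^{-s/2}\prod_{\alpha\in\Phi^+}q^{-\la\lambda+\rho|\alpha\ra}=(q^{-1}-q)^{-s/2}q^{-\la\rho|2\rho\ra}q^{-\la\lambda|2\rho\ra}$, exactly as claimed.

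The routine parts are the $q$-number inequalities and the bookkeeping with $\sum_{\alpha\in\Phi^+}\alpha=2\rho$; I expect the only genuine subtlety to be the first part, namely pinning down the correct eigenvalues (and in particular the sign convention) of $\uprho_\lambda$, since this depends on the chosen normalisation of the $q$-deformation. I would resolve this by cross-checking against the $SU_q(2)$ case computed earlier, where $\uprho_1$ has eigenvalues $q^{\mp1}$ on the weights $\pm\varpi_1$ and $\la\varpi_1|2\rho\ra=1$. A secondary point worth recording explicitly is the inequality $\la\rho|\alpha\ra\ge1$ for every positive root $\alpha$, which guarantees $[\la\rho|\alpha\ra]_q\ge1$ and hence the term-by-term upper bound. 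As a consistency check, note that the lower bound coincides with $\Gamma(\lambda)$, matching the general inequality $\Tr(\uprho_\lambda)\ge\|\uprho_\lambda\|$ for positive operators.
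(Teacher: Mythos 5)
Your proof is correct, and it coincides with the paper's argument in two of its three components. For the identification $\Gamma(\lambda)=\|\uprho_\lambda\|=q^{-\la \lambda|2\rho\ra}$ the paper does exactly what you do: it recalls that $\uprho_\lambda$ is the image of $K_{2\rho}$ under the representation $U_q\mf{g}\to \B(\msf{H}_\lambda)$, diagonal on weight spaces, and then invokes Lemma \ref{lemma6}; the paper's eigenvalue convention is $q^{\la \mu|2\rho\ra}$ (opposite sign to yours), but since Lemma \ref{lemma6} gives both the maximum and the minimum of $\la \mu|\rho\ra$ over $\Pi(\lambda)$, the two conventions produce the same norm, so the subtlety you flagged is harmless. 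Your upper bound on $d(\lambda)$ is also identical to the paper's: $[\la\rho|\alpha\ra]_q\ge 1$ because $\la\rho|\alpha\ra\ge 1$, together with $[\la\lambda+\rho|\alpha\ra]_q\le q^{-\la\lambda+\rho|\alpha\ra}/(q^{-1}-q)$, then products over $\Phi^+$ using $\sum_{\alpha\in\Phi^+}\alpha=2\rho$ and $\#\Phi^+=s/2$. Where you genuinely diverge is the lower bound $q^{-\la\lambda|2\rho\ra}\le d(\lambda)$: the paper gets it for free from the first part, via the operator inequality $q^{-\la\lambda|2\rho\ra}=\|\uprho_\lambda\|\le \Tr(\uprho_\lambda)=d(\lambda)$, whereas you derive it from Proposition \ref{prop6} through the term-by-term estimate $q^{\,b-a}\le [a]_q/[b]_q$ for $a\ge b\ge 1$ (applied with $a=\la\lambda+\rho|\alpha\ra$, $b=\la\rho|\alpha\ra$, using $\la\lambda|\alpha\ra\ge 0$ for dominant $\lambda$), which is a valid computation. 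Your route has the small advantage of making the dimension bounds self-contained within the $q$-Weyl formula, with no reference to the operator realisation of $\uprho_\lambda$; the paper's is shorter, being a one-line consequence of norm $\le$ trace for positive operators. Both are fully rigorous.
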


\begin{proof}
To see the lower bound on $d(\lambda)$, recall from \cite[Lemma 4.4]{KrajczokSoltan} that $\uprho_{\lambda}$ is equal to the image of $K_{2\rho}$ under the corresponding representation $U_q\mf{g}\rightarrow \B(\msf{H}_\lambda)$. This operator is diagonal with eigenvalues $q^{\la \mu | 2\rho\ra}\,(\mu\in\Pi(\lambda))$, hence $\|\uprho_\lambda\|=\max_{\mu\in\Pi(\lambda)}q^{\la \mu| 2\rho\ra}=q^{-\la\lambda|2\rho\ra}$ (Lemma \ref{lemma6}). The lower bound then follows from $\|\uprho_\lambda\|\le \Tr(\uprho_\lambda)=d(\lambda)$. This also proves the first claim, as $\Gamma(\lambda)=\|\uprho_\lambda\|$.

We will use Proposition \ref{prop6} to bound the quantum dimension of $\lambda$ from above. Fix $\alpha\in \Phi^+$ and write $\alpha=\sum_{i=1}^{r} C(\alpha,i)\alpha_i$ for some $C(\alpha,i)\in \ZZ_+$. Using $\rho=\sum_{j=1}^{r}\varpi_j$ and our normalisation of the inner product, we calculate 
\[
\la\rho|\alpha\ra =\sum_{i,j=1}^{r}
\la \varpi_j|C(\alpha,i)\alpha_i\ra =
\sum_{i=1}^{r} C(\alpha,i) \tfrac{\la \alpha_i|\alpha_i\ra }{2} \ge 1.
\]
It follows that $\prod_{\alpha\in\Phi^+} [\la \rho|\alpha\ra]_q \ge \prod_{\alpha\in\Phi^+} [1]_q =1$. On the other hand, since $\rho=\tfrac{1}{2}\sum_{\alpha\in\Phi^+}\alpha$, we have
\[
\prod_{\alpha\in\Phi^+} [\la \lambda+\rho|\alpha\ra]_q \le 
\prod_{\alpha\in\Phi^+} \tfrac{q^{-\la \lambda+\rho|\alpha\ra }}{q^{-1}-q}=
\tfrac{q^{- \la \lambda+\rho|2 \rho\ra }}{(q^{-1}-q)^{s/2}}.
\]
Proposition \ref{prop6} ends the proof.
\end{proof}

As a first step towards calculation of the uniform growth rate $\omega(R(G_q))$, we provide a general formula for the growth rate for an arbitrary generating set.

\begin{proposition}\label{prop7}
Let $Y\subseteq \bf{P}^+$ be a finite generating set for $R(G_q)$. Then
\[
\omega_Y(R(G_q)) = 
\max_{\lambda\in Y} q^{-4\la\lambda|\rho\ra}
\]
\end{proposition}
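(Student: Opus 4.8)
The plan is to show that the exponential growth of $|B_Y(n)|=\sum_{\alpha\in B_Y(n)}d(\alpha)^2$ is governed entirely by the quantum dimension, whose size is pinned down up to a fixed multiplicative constant by the linear functional $\lambda\mapsto\la\lambda|\rho\ra$ via Proposition \ref{prop5}. Set $M_n=\max_{\alpha\in B_Y(n)}\la\alpha|\rho\ra$ and $m=\max_{\lambda\in Y}\la\lambda|\rho\ra$. Proposition \ref{prop5} gives, with the fixed constant $C=(q^{-1}-q)^{-s/2}q^{-\la\rho|2\rho\ra}$ that does not depend on $\alpha$, the two-sided estimate $q^{-4\la\alpha|\rho\ra}\le d(\alpha)^2\le C^2\,q^{-4\la\alpha|\rho\ra}$, whence
\[
q^{-4M_n}\le |B_Y(n)|\le C^2\,(\#B_Y(n))\,q^{-4M_n}.
\]
Taking $n$-th roots and letting $n\to\infty$, the factor $C^{2/n}\to 1$ and, by Lemma \ref{lemma4}, $(\#B_Y(n))^{1/n}\to 1$ since the ball has only polynomial growth. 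Thus $\omega_Y(R(G_q))=\lim_{n\to\infty}q^{-4M_n/n}$, so the whole problem reduces to computing $M_n$.

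I would then claim that $M_n=n\,m$ exactly. For the upper bound, if $\alpha\in B_Y(n)$ then $\alpha\subseteq y_1\cdots y_k$ for some $k\le n$ and $y_1,\dotsc,y_k\in Y$; the highest weight $\alpha$ is a weight of $y_1\otimes\cdots\otimes y_k$, hence $\alpha=\mu_1+\cdots+\mu_k$ with $\mu_i\in\Pi(y_i)$, and Lemma \ref{lemma6} yields
\[
\la\alpha|\rho\ra=\sum_{i=1}^k\la\mu_i|\rho\ra\le \sum_{i=1}^k\la y_i|\rho\ra\le k\,m\le n\,m,
\]
where the last inequality uses that $\la y|\rho\ra\ge 0$ for dominant $y$ (both $y$ and $\rho$ lie in $\bf{P}^+$, so their inner product is non-negative). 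For the lower bound, pick $\lambda_0\in Y$ attaining $m$; the Cartan component $n\lambda_0$ occurs in $\lambda_0^{\otimes n}$, so $n\lambda_0\in B_Y(n)$ with $\la n\lambda_0|\rho\ra=n\,m$. This gives $M_n\ge n\,m$, and therefore $M_n=n\,m$.

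Combining the two steps, $M_n/n=m$ is constant in $n$, so the limit exists and
\[
\omega_Y(R(G_q))=q^{-4m}=q^{-4\max_{\lambda\in Y}\la\lambda|\rho\ra}=\max_{\lambda\in Y}q^{-4\la\lambda|\rho\ra},
\]
the final equality holding because $q\in(0,1)$ makes $t\mapsto q^{-4t}$ increasing. The only genuinely non-formal ingredient is the identification $M_n=n\,m$; the crux there is the subadditivity $\la\alpha|\rho\ra\le\sum_i\la y_i|\rho\ra$ under fusion, which rests on $\Pi(\lambda\mu)\subseteq\Pi(\lambda)+\Pi(\mu)$ together with Lemma \ref{lemma6}, and on the non-negativity of $\la y|\rho\ra$ that prevents shorter products from overshooting. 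Everything else — the dimension squeeze and the vanishing of the polynomial ball-count under the $n$-th root — is routine.
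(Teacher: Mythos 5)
Your proof is correct and follows essentially the same route as the paper's: the same dimension squeeze from Proposition \ref{prop5}, the same use of Lemma \ref{lemma6} and the weight decomposition $\Pi(y_1\otimes\cdots\otimes y_k)=\Pi(y_1)+\cdots+\Pi(y_k)$ for the upper bound, the Cartan component $n\lambda_0$ for the lower bound, and Lemma \ref{lemma4} to kill the polynomial ball count. Your reformulation via the quantity $M_n=\max_{\alpha\in B_Y(n)}\la\alpha|\rho\ra$ and the identity $M_n=nm$ is only a cosmetic repackaging of the paper's direct estimate of $\sqrt[n]{|B_Y(n)|}$.
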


\begin{proof}
Take $n\in\NN$. Observe that for $0\neq \lambda\in B_Y(n)$ we have $ \lambda\subseteq \lambda_1\otimes\cdots\otimes \lambda_k$ for some $1\le k \le n, \lambda_1,\dotsc,\lambda_k\in Y$. Hence
\[
\lambda\in \Pi(\lambda_1\otimes\cdots\otimes\lambda_k)=\{\mu_1+\cdots+\mu_k\,|\, \mu_1\in \Pi(\lambda_1),\dotsc,\mu_k\in \Pi(\lambda_k)\}.
\]
Consequently, using Proposition \ref{prop5} and Lemma \ref{lemma6} we obtain
\begin{align*}
\max_{\lambda\in B_Y(n)} d(\lambda) &\le 
\max_{\lambda\in B_Y(n)} 
(q^{-1}-q)^{-s/2} q^{-\la \rho|2\rho\ra } 
q^{- \la \lambda|2\rho\ra }\\
&\le 
(q^{-1}-q)^{-s/2} q^{- \la \rho|2\rho\ra } 
\bigl( \,
\max_{\mu\in\bigcup_{\lambda\in Y}\Pi(\lambda)}
q^{- \la \mu|2 \rho\ra }\bigr)^n
\\&=
(q^{-1}-q)^{-s/2} q^{- \la \rho|2 \rho\ra } 
\bigl( \,
\max_{\lambda\in Y}
q^{- \la \lambda|2 \rho\ra }\bigr)^n.
\end{align*}
Since $n\mapsto \# B_Y(n)$ has polynomial growth (Lemma \ref{lemma4}), it follows that
\begin{align*}
\sqrt[n]{|B_Y(n)|}&\le 
(\# B_Y(n))^{1/n}  
\max_{\lambda\in B_Y(n)} d(\lambda)^{2/n}\\
&\le 
(\# B_Y(n))^{1/n}  
(q^{-1}-q)^{-s/n}q^{-4\la\rho|\rho\ra/n}
\max_{\lambda\in Y} q^{-4\la\lambda|\rho\ra}
\xrightarrow[n\to\infty]{}
\max_{\lambda\in Y} q^{-4\la\lambda|\rho\ra}.
\end{align*}
On the other hand, let $\lambda_0\in Y$ be a representation which maximises the expression $\la\lambda|\rho\ra\,(\lambda\in Y)$. Since $n \lambda_0 \in \bf{P}^+$ is a subrepresentation of $\lambda_0\otimes\cdots\otimes \lambda_0$ ($n$-fold tensor product), we have $n\lambda_0\in B_Y(n)$ and using Proposition \ref{prop5} we obtain
\[
\sqrt[n]{|B_Y(n)|}\ge 
\sqrt[n]{|\{n\lambda_0\}|}=
d(n\lambda_0)^{2/n} \ge 
\bigl(q^{-\la n\lambda_0|2 \rho\ra }\bigr)^{2/n}=
 q^{-4\la \lambda_0|\rho\ra }.
\]
\end{proof}

Using the fact that $\la\varpi_i|\rho\ra \ge 0\,(1\le i \le r)$ and any non-zero $\lambda\in \bf{P}^+$ is of the form $\lambda=\sum_{i=1}^{r} \lambda_i \varpi_i$ with $\lambda_i\in \ZZ_+\,(1 \le i \le r)$ and $\sum_{i=1}^{r}\lambda_i\ge 1$, we obtain as a corollary a lower bound for $\omega(R(G_q))$.

\begin{corollary}\label{cor1}
Let $G$ be a simply connected, semisimple compact Lie group and $0<q<1$. Then
\[
\omega(R(G_q))  \ge \min_{1 \le i \le r } q^{-4\la\varpi_i |\rho\ra}.
\]
\end{corollary}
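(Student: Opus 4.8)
The plan is to combine the exact formula for $\omega_Y(R(G_q))$ obtained in Proposition \ref{prop7} with the elementary combinatorics of the positive cone $\bf{P}^+$. First I would fix an arbitrary finite generating set $Y\subseteq \bf{P}^+$ for $R(G_q)$ and apply Proposition \ref{prop7} to write
\[
\omega_Y(R(G_q))=\max_{\lambda\in Y} q^{-4\la\lambda|\rho\ra}.
\]
Since $G$ is semisimple the rank satisfies $r\ge 1$, so $\bf{P}^+$ is infinite, while the unit $e$ (the trivial weight $0$) generates only itself; hence any generating set $Y$ must contain at least one non-zero dominant weight $\lambda$.

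Next I would bound $\la\lambda|\rho\ra$ from below for such a $\lambda$. Writing $\lambda=\sum_{i=1}^{r}\lambda_i\varpi_i$ with $\lambda_i\in\ZZ_+$ and $\sum_{i=1}^{r}\lambda_i\ge 1$, at least one coefficient $\lambda_j$ is strictly positive. Using that $\la\varpi_i|\rho\ra\ge 0$ for every $i$ (recorded just before the statement), we discard the non-negative terms and keep the $j$-th one:
\[
\la\lambda|\rho\ra=\sum_{i=1}^{r}\lambda_i\la\varpi_i|\rho\ra\ge \lambda_j\la\varpi_j|\rho\ra\ge \la\varpi_j|\rho\ra\ge \min_{1\le i\le r}\la\varpi_i|\rho\ra.
\]

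It then remains to transfer this to the exponentials. Because $0<q<1$, the map $t\mapsto q^{-4t}$ is increasing, so the displayed inequality yields
\[
q^{-4\la\lambda|\rho\ra}\ge q^{-4\min_{i}\la\varpi_i|\rho\ra}=\min_{1\le i\le r} q^{-4\la\varpi_i|\rho\ra},
\]
the final equality again using the monotonicity of $q^{-4t}$. Since $\lambda\in Y$, the maximum defining $\omega_Y(R(G_q))$ dominates this single term, giving $\omega_Y(R(G_q))\ge \min_{1\le i\le r} q^{-4\la\varpi_i|\rho\ra}$. As $Y$ was an arbitrary finite generating set, taking the infimum over all such $Y$ produces the asserted lower bound for $\omega(R(G_q))$.

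There is no substantial obstacle in this argument, which is purely a corollary of Proposition \ref{prop7}. The only points needing a little care are that a generating set cannot reduce to $\{e\}$ (guaranteeing a non-zero weight on which to evaluate), and the direction of monotonicity of $t\mapsto q^{-4t}$, which is precisely where the hypothesis $q<1$ is used; reversing it would turn the bound into an upper estimate instead.
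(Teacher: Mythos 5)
Your proposal is correct and follows exactly the paper's intended argument: the corollary is deduced from Proposition \ref{prop7} by noting that any finite generating set contains a non-zero dominant weight $\lambda=\sum_i\lambda_i\varpi_i$ with $\lambda_i\in\ZZ_+$, $\sum_i\lambda_i\ge 1$, using $\la\varpi_i|\rho\ra\ge 0$ and the monotonicity of $t\mapsto q^{-4t}$ for $0<q<1$. Your write-up merely spells out in full the one-line justification the paper gives before stating the corollary.
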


This lower bound will give us a way of calculating the uniform exponential growth rate of $R(G_q)$ (see Remark \ref{rem1} and Remark \ref{rem3}).

Recall that one says that $G$ is\footnote{Bourbaki \cite{Bourbaki1-3} calls $G$ \emph{almost simple}.} \emph{simple}, if its (complexified) Lie algebra $\mf{g}$ is simple. An arbitrary compact, simply connected, semisimple Lie group $G$ can be written as a product $G\simeq \prod_{a=1}^{l} G_{a}$, where each factor $G_a$ is a compact, simply connected, simple Lie group (\cite[Proposition III.9.28]{Bourbaki1-3}). It is well known that a similar decomposition holds as well for $q$-deformations. 

\begin{proposition}\label{prop8}
If $G\simeq \prod_{a=1}^{l}G_a$ is a decomposition of $G$ into simple factors, then $G_q$ is isomorphic with $\prod_{a=1}^{l} (G_{a})_q$.
\end{proposition}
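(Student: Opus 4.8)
The plan is to reduce the statement to a factorisation of the Drinfeld--Jimbo algebra, following the construction of $G_q$ recalled in \cite[Section 2.4]{NeshveyevTuset}. Write $\mf{g}=\bigoplus_{a=1}^{l}\mf{g}_a$ for the decomposition of the complexified Lie algebra of $G$ into simple ideals, matching $G\simeq\prod_{a=1}^{l} G_a$. On the level of combinatorial data this means that the weight lattice splits as $\bf{P}=\bigoplus_{a=1}^{l}\bf{P}_a$, the root system as an orthogonal disjoint union $\Phi=\bigsqcup_{a=1}^{l}\Phi_a$, and the positive cone as $\bf{P}^+=\prod_{a=1}^{l}\bf{P}_a^+$; in particular the Cartan matrix of $\mf{g}$ is block-diagonal, the $a$-th block being the Cartan matrix of $\mf{g}_a$.

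First I would verify that the quantised universal enveloping algebra factorises as a Hopf $*$-algebra,
\[
U_q\mf{g}\;\simeq\;\bigotimes_{a=1}^{l} U_q\mf{g}_a .
\]
Indeed, $U_q\mf{g}$ is presented by generators $E_i,F_i,K_i^{\pm1}$ subject to the Drinfeld--Jimbo relations, each of which couples two indices $i,j$ only through the corresponding entry of the (symmetrised) Cartan matrix. Since that entry vanishes whenever $i$ and $j$ label simple roots of distinct factors, the generators indexed by different blocks commute and no mixed Serre relations occur; hence the subalgebras generated by the separate blocks commute and together generate $U_q\mf{g}$. As the coproduct, counit, antipode and $*$-operation act on each generator using only generators with the same index, they respect this decomposition, so the displayed isomorphism is one of Hopf $*$-algebras.

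Next I would transport this to the function algebras. By the standard highest-weight theory, a finite-dimensional type $1$ representation of $U_q\mf{g}$ with highest weight $\lambda=(\lambda_a)_{a}\in\bf{P}^+=\prod_a\bf{P}_a^+$ is the outer tensor product $\boxtimes_{a=1}^{l} V_{\lambda_a}$ of the corresponding representations $V_{\lambda_a}$ of $U_q\mf{g}_a$, compatibly with the tensor-product Hopf structure. Passing to matrix coefficients, the algebra $\Pol(G_q)$ therefore factorises as a Hopf $*$-algebra,
\[
\Pol(G_q)\;\simeq\;\bigotimes_{a=1}^{l}\Pol((G_a)_q),
\]
equipped with the tensor-product coproduct. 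This is exactly the Hopf $*$-algebra describing the direct product $\prod_{a=1}^{l}(G_a)_q$ of compact quantum groups, whence $G_q\simeq\prod_{a=1}^{l}(G_a)_q$.

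Since the result is classically well known and its quantum counterpart is a routine verification on generators, I expect no conceptual difficulty; the only point requiring genuine care---and hence the main potential obstacle---is the consistent bookkeeping of conventions in the definitions of $U_q\mf{g}$ and of $G_q$ (the normalisation of the bilinear form, the choice of the parameters $q_i=q^{\la\alpha_i|\alpha_i\ra/2}$, and the admissibility/type $1$ conventions), as flagged in the footnote at the start of this section. Once these are fixed uniformly across $\mf{g}$ and all its simple factors, each step above follows by direct inspection.
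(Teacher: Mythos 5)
Your proposal is correct and follows exactly the route the paper indicates: the paper's own justification is a one-line remark that the result follows by "establishing first an analogous decomposition for the $q$-deformed universal enveloping algebra $U_q\mf{g}$", which is precisely the factorisation $U_q\mf{g}\simeq\bigotimes_{a=1}^{l}U_q\mf{g}_a$ you prove and then transport to $\Pol(G_q)$ via highest-weight theory and matrix coefficients. Your write-up simply fills in the details of that sketch, including the appropriate care with normalisation conventions.
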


Note that one can prove this result by establishing first an analogous decomposition for $q$-deformed universal enveloping algebra $U_q \mf{g}$.

\begin{remark}\label{rem1}
Because of Proposition \ref{prop4} and Proposition \ref{prop8}, in order to compute the uniform exponential growth rate of $R(G_q)$, it is enough to consider the situation when $G$ is simple. We do this in Theorem \ref{thm1}.
\end{remark}

Simply connected, simple, compact Lie groups are completely classified: $G$ can be of classical type $A_N\,(N\ge 1)$, $B_N\,(N\ge 2)$, $C_N(N\ge 3)$, $D_N(N\ge 4)$ or of exceptional type $E_6,E_7,E_8,F_4,G_2$, see \cite[Section 11.4]{Humphreys}.

\begin{theorem}\label{thm1}
Let $0<q<1$ and $G$ be a compact, simply connected, simple Lie group. We have the following results:
\begin{itemize}
\item type $A_N$: $G=\SU(N+1)\,(N\ge 1)$: $\omega(R(G_q))=q^{-2N}$;
\item type $B_N$: $G=\oon{Spin}(2N+1)\,(N\ge 2)$: $\omega(R(G_q))=q^{-2N^2}$;
\item type $C_N$: $G=\oon{Sp}(2N)\,(N\ge 3)$: $\omega(R(G_q))=q^{-4N}$;
\item type $D_N$: $G=\oon{Spin}(2N)\,(N\ge 4)$: $\omega(R(G_q))=q^{-N(N-1)}$.
\end{itemize}
In exceptional cases we have the following results:
\begin{itemize}
\item type $E_6$: $\omega(R(G_q))=q^{-32}$;
\item type $E_7$: $\omega(R(G_q))=q^{-54}$;
\item type $E_8$: $\omega(R(G_q))=q^{-116}$;
\item type $F_4$: $\omega(R(G_q))=q^{-44}$;
\item type $G_2$: $\omega(R(G_q))=q^{-20}$.
\end{itemize}
\end{theorem}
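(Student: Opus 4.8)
The plan is to reduce the computation of $\omega(R(G_q))$ to a purely lattice-theoretic minimisation and then solve it type by type. By Proposition \ref{prop7}, for any finite generating set $Y\subseteq\bf{P}^+$ we have $\omega_Y(R(G_q))=\max_{\lambda\in Y}q^{-4\la\lambda|\rho\ra}$. Since $q\in(0,1)$, the map $t\mapsto q^{-4t}$ is continuous and strictly increasing, so passing to the infimum over all finite symmetric generating sets gives
\[
\omega(R(G_q))=\inf_Y\omega_Y(R(G_q))=q^{-4M},\qquad M:=\inf_Y\ \max_{\lambda\in Y}\la\lambda|\rho\ra .
\]
Thus it suffices to compute $M$ for each simple type and check that $q^{-4M}$ matches the tabulated value; by Remark \ref{rem1} the simple case is all that is needed.

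The key structural observation is that $R(G_q)$ is graded by the fundamental group $\bf{P}/\bf{Q}$, where $\bf{Q}=\ZZ\Phi$ is the root lattice. Indeed every weight of $\beta\otimes\gamma$ lies in the coset $\ov\beta+\ov\gamma$, so $\alpha\subseteq\beta\otimes\gamma$ forces $\ov\alpha=\ov\beta+\ov\gamma$ in $\bf{P}/\bf{Q}$. Consequently the sub-fusion-algebra generated by $Y$ only meets cosets lying in the subgroup generated by $\{\ov\lambda:\lambda\in Y\}$, and therefore any generating set $Y$ must have $\{\ov\lambda:\lambda\in Y\}$ generating $\bf{P}/\bf{Q}$. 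This is the mechanism producing the lower bound for $M$; I will not need the converse implication, since for the upper bound I will simply exhibit explicit generating sets.

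For the lower bound I distinguish two regimes. In types $A_N$ and $C_N$ (and, as it turns out, in $B_2,B_3,D_4$ and all exceptional types) the cheapest fundamental weight already generates $\bf{P}/\bf{Q}$, so Corollary \ref{cor1} gives $M\ge\min_i\la\varpi_i|\rho\ra$, which I compute from the inverse Cartan matrix to be $\tfrac{N}{2}$ for $A_N$ and $N$ for $C_N$. In types $B_N$ and $D_N$ the point is that $\min_i\la\varpi_i|\rho\ra$ is too small: here $\bf{P}/\bf{Q}\cong\ZZ/2$ (resp.\ $\ZZ/4$ or $(\ZZ/2)^2$), and any subset generating this group must contain a \emph{spinorial} weight, i.e.\ one with $\lambda_N$ odd (resp.\ $\lambda_{N-1}+\lambda_N$ odd). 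Writing $\lambda=\sum_i\lambda_i\varpi_i$ and using that $\la\lambda|\rho\ra=\sum_i\lambda_i\la\varpi_i|\rho\ra$ is a non-negative combination of the positive numbers $\la\varpi_i|\rho\ra$, every spinorial $\lambda$ satisfies $\la\lambda|\rho\ra\ge\la\varpi_N|\rho\ra=\tfrac{N^2}{2}$ for $B_N$, and $\la\lambda|\rho\ra\ge\la\varpi_{N-1}|\rho\ra=\la\varpi_N|\rho\ra=\tfrac{N(N-1)}{4}$ for $D_N$. Hence some $\lambda\in Y$ is this expensive, giving $M\ge\tfrac{N^2}{2}$ resp.\ $\tfrac{N(N-1)}{4}$.

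For the matching upper bound I exhibit a symmetric generating set whose most expensive element realises $M$: $\{\varpi_1,\varpi_N\}$ for $A_N$, $\{\varpi_1\}$ for $C_N$, the spin representation $\{\varpi_N\}$ for $B_N$, and the two half-spin representations $\{\varpi_{N-1},\varpi_N\}$ for $D_N$. Each is symmetric because conjugation is $\lambda\mapsto-w_\circ\lambda$, and each generates $R(G_q)$ by the classical fact that suitable tensor powers of the standard, spin, or half-spin representations contain every fundamental representation; a direct check shows the maximal value of $\la\cdot|\rho\ra$ on each set equals the lower bound just obtained. For the five exceptional types I compute the full list of numbers $\la\varpi_i|\rho\ra$ with Sage \cite{sagemath}; in each case the minimiser lies in a coset that generates $\bf{P}/\bf{Q}$, so Corollary \ref{cor1} is sharp and yields the tabulated exponents $32,54,116,44,20$. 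The main obstacle is precisely the refined lower bound in the regimes $B_N$ $(N\ge4)$ and $D_N$ $(N\ge5)$, where Corollary \ref{cor1} is \emph{not} sharp: the substantive content is recognising that generation forces an expensive spinorial representation through the $\bf{P}/\bf{Q}$-grading, together with the verification that the cheapest dominant weight in a spinorial coset is the fundamental (half-)spin weight.
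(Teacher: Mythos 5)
Your proposal follows the same skeleton as the paper's proof --- Proposition \ref{prop7} reduces everything to $M=\inf_Y\max_{\lambda\in Y}\la\lambda|\rho\ra$, Corollary \ref{cor1} settles types $A_N$, $C_N$ and (once sharpness is known) the exceptional types, and a refined lower bound is needed exactly in types $B_N$ and $D_N$ --- but it replaces both of the paper's case-specific mechanisms by more conceptual ones. For the $B_N/D_N$ lower bound the paper invokes the Klimyk--Schm\"udgen parametrisation (subrepresentations of tensor powers of $\varpi_1$ are precisely those with all $m_i$ integral), whereas you use the grading of the fusion ring by $\mathbf{P}/\mathbf{Q}$, $\mathbf{Q}=\ZZ\Phi$ the root lattice; these are the same obstruction (integral versus half-integral $m_i$ is exactly the coset condition), but your formulation is uniform across types and avoids the explicit parametrisation. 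Your identification of the spinorial weights ($\lambda_N$ odd for $B_N$, $\lambda_{N-1}+\lambda_N$ odd for $D_N$), the resulting cost estimates, and your generating sets for the classical types (including $\{\varpi_{N-1},\varpi_N\}$ for $D_N$, slightly leaner than the paper's $\{\varpi_1,\varpi_{N-1},\varpi_N\}$ but with the same maximal cost) are all correct.

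There is, however, a genuine gap at the exceptional types, and it is one you create for yourself: having announced that you will \emph{not} need the converse implication (classes of $Y$ generating $\mathbf{P}/\mathbf{Q}$ implies $Y$ generates the fusion ring) because you will ``simply exhibit explicit generating sets'', you then prove sharpness of Corollary \ref{cor1} in types $E_6$--$G_2$ by exactly that converse, with no justification. Worse, for $E_8$, $F_4$ and $G_2$ the group $\mathbf{P}/\mathbf{Q}$ is trivial, so your criterion carries no information there: it reduces to the bare assertion that any nontrivial fundamental representation generates $R(G_q)$, which is precisely what must be proved --- this is what the paper's Sage tensor-product decompositions accomplish. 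The gap is fixable without computation: for a simply connected simple $G$, the kernel of a nontrivial irreducible $V_\lambda$ is central and equals the annihilator of $\la[\lambda]\ra\subseteq\mathbf{P}/\mathbf{Q}$, so $V_\lambda$ is faithful iff $[\lambda]$ generates $\mathbf{P}/\mathbf{Q}$; by the Burnside--Brauer theorem every irreducible representation of a compact group occurs in some $V^{\otimes a}\otimes\ov{V}^{\otimes b}$ with $V$ faithful; and since $G_q$ has the same fusion rules as $G$, a symmetric set whose classes generate $\mathbf{P}/\mathbf{Q}$ is therefore fusion-generating. State and invoke this principle and all your generation claims (classical and exceptional) become complete, yielding a proof that needs Sage only for the numbers $\la\varpi_i|\rho\ra$ rather than for tensor decompositions. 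Finally, a minor slip: your parenthetical claim that in $D_4$ ``the cheapest fundamental weight already generates $\mathbf{P}/\mathbf{Q}$'' is false, since $\mathbf{P}/\mathbf{Q}\cong(\ZZ/2)^2$ is not cyclic; this is harmless, as your spinorial argument covers all $D_N$ with $N\ge 4$.
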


\begin{proof}
In classical types $A_N$ -- $D_N$, we use the information about fusion rules provided in \cite{KlimykSchmudgen}, in particular we fix the ordering of simple positive roots as in \cite{KlimykSchmudgen}. Klimyk-Schmudgen parametrise weights $\lambda\in \bf{P}^+$ in two ways, by $(n_i)_{i=1}^{N}$ and $(m_i)_i$, see \cite[Page 201]{KlimykSchmudgen}. Numbers $(n_i)_{i=1}^{N}$ are defined by $n_i=2\tfrac{\la\alpha_i| \lambda \ra }{\la \alpha_i|\alpha_i\ra}$, so that $\lambda=\sum_{i=1}^{N} n_i\varpi_i$. Parametrisation by $(m_i)_i$ depends on type, and we note that in type $A_N$ sequences $(m_i)_i$ and $(m_i+m)_i$, for an arbitrary $m\in\ZZ$, correspond to the same representation. We will also use the calculation of numbers $\la \varpi_i|\rho\ra$ from \cite{KrajczokSoltan}. We can copy these numbers without change, since as exhibited in \cite[Equation $(4.22)$]{KrajczokSoltan}, they depend only on the Cartan matrix $\bigl(2 \tfrac{\la\alpha_i|\alpha_j\ra}{\la\alpha_i|\alpha_i\ra}\bigr)_{i,j=1}^{r}$ and square-lengths of roots $\la\alpha_i|\alpha_i\ra$, and this data is on the nose the same in \cite{KrajczokSoltan} and in \cite{KlimykSchmudgen} (see \cite[Page 159]{KlimykSchmudgen}). Information about the action of $w_\circ$ can be taken from \cite[Page 265 -- 273]{BourbakiLie4-6}.\\

(Type $A_N$) 

For $1\le i \le N$ we have $\la \varpi_i |\rho\ra =\tfrac{1}{2}(N+1-i)i$. Hence $\min_{1 \le i \le N}\la \varpi_i|\rho\ra = \la\varpi_1|\rho\ra=\tfrac{N}{2}$. Representation $\varpi_1$ corresponds to numbers $n_i=\delta_{i,1}$ and $m_i=\delta_{i,1}$, hence by \cite[Equation $(13)$, page 210]{KlimykSchmudgen} $\varpi_1$ is generating. When $N\ge 2$, $\varpi_1$ is not self-contragradient, but $\ov{\varpi_1}=\varpi_N$. Consequently, we can take $X=\{\varpi_1,\varpi_N\}$ as a (symmetric) generating set. Proposition \ref{prop7} gives $\omega_X(R(G_q))=q^{-2N}$, which is equal to the lower bound of Corollary \ref{cor1}. Thus $\omega(R(G_q))=q^{-2N}$.\\

(Type $B_N$) 

For $1\le i < N$ we have $\la \varpi_i |\rho\ra =i(2N-i)$, and $\la\varpi_N|\rho\ra = \tfrac{N^2}{2}$.  It follows that all representations are self-contragradient, which is also stated in \cite[Page 268]{BourbakiLie4-6}. The relation between parametrisations $(n_i)_{i=1}^{N}$ and $(m_i)_{i=1}^{N}$ is as follows:
\[
n_i=m_i-m_{i+1}\;(1\le i \le N-1),\quad
n_N=2m_N.
\]
The numbers $(m_i)_{i=1}^{N}$ are either all integers, or all half-integers and satisfy $m_1\ge \cdots \ge m_N\ge 0$. We see that the fundamental weights $\varpi_j$ correspond to the following sequences $(m_i)_{i=1}^{N}$:
\begin{equation}\label{eq7}
\varpi_j\leftrightarrow m_i=\begin{cases}
1, & 1 \le i \le j,\\
0, & j<i\le N
\end{cases} \textnormal{ for }1\le j \le N-1 \quad \textnormal{and}\quad 
\varpi_N\leftrightarrow m_i=\tfrac{1}{2},\;\; 1 \le i \le N .
\end{equation}

We know from \cite[Proposition 20, page 211]{KlimykSchmudgen} that the representation $\varpi_1$ is not generating, but representations which occur as subrepresentations of some tensor power $\varpi_1 \otimes \cdots\otimes \varpi_1$ are precisely those with $m_i\in \ZZ$. It follows that any generating set $Y\subseteq \bf{P}^+$ must include $\mu=\sum_{i=1}^{N}\mu_i \varpi_i\in Y$ with $\mu_N\neq 0$. Indeed, assume otherwise and take $\mu^1,\dotsc,\mu^K\in Y$ such that $\varpi_N\subseteq \mu^1\otimes \cdots\otimes \mu^K$. By the assumption, we can find $A_k\in \NN$ such that $\mu^k\subseteq \varpi_1^{\otimes A_k}$. Hence
\[
\varpi_N\subseteq \mu^1\otimes \cdots \otimes \mu^K\subseteq \varpi_1^{\otimes A_1}\otimes\cdots \otimes\varpi_1^{\otimes A_K}
\]
which contradicts \eqref{eq7}. Proposition \ref{prop7} implies $\omega(R(G_q))\ge q^{-4 \la \varpi_N|\rho\ra }=q^{-2N^2}$. From \cite[equation (15), page 210]{KlimykSchmudgen} we deduce that $\{\varpi_1,\varpi_N\}$ is generating. In fact, this equation shows that $\varpi_N\subseteq \varpi_N\otimes \varpi_1$, hence by the Frobenius reciprocity $\varpi_1\subseteq \varpi_N\otimes \varpi_N$ and $X=\{\varpi_N\}$ is generating. It follows that $\omega(R(G_q))=\omega_{X}(R(G_q))=q^{-2N^2}$.\\

(Type $C_N$) 

For $1\le i \le N$ we have $\la \varpi_i |\rho\ra =(2N+1-i)\tfrac{i}{2}$. As in the previous case, all representations are self-contragredient. We have $\min_{1\le i \le N}\la \varpi_i|\rho\ra = \la \varpi_1|\rho\ra =N$. As the representation $\varpi_1$ corresponds to $m_i=\delta_{i,1}$, it is generating (\cite[Proposition 20, page 201]{KlimykSchmudgen}). For the generating set $X=\{\varpi_1\}$ we obtain $\omega(R(G_q))=\omega_X(R(G_q))=q^{-4N}$.\\

(Type $D_N$) 

For $1\le i \le N-2$ we have $\la \varpi_i |\rho\ra = (2N-i-1)\tfrac{i}{2}$, and $\la \varpi_{N-1}|\rho\ra = \la \varpi_N|\rho\ra=(N-1)\tfrac{N}{4}$. In this case, the correspondence between the numbers $(n_i)_{i=1}^{N}$ and $(m_i)_{i=1}^{N}$ is given by
\[
n_i=m_i-m_{i+1}\;(1\le i \le N-1),\quad
n_N=m_{N-1} + m_{N}.
\]
Furthermore, the numbers $(m_i)_{i=1}^{N}$ are either all integers or all half-integers and satisfy $m_1\ge \cdots \ge m_{N-1}\ge |m_{N}|$. The fundamental weights $\varpi_j$ correspond to $(m_i)_{i=1}^{N}$ as follows:
\begin{equation}\begin{split}\label{eq8}
\varpi_j&\leftrightarrow
m_i=\begin{cases} 
1, & 1\le i \le j,\\
0, & j<i\le N,
\end{cases}
\quad \textnormal{for}\quad 1\le j \le N-2,\\
\varpi_{N-1}&\leftrightarrow m_i=
\begin{cases} \tfrac{1}{2}, & 1\le i \le N-1,\\
-\tfrac{1}{2}, & i=N,
\end{cases}\\
\varpi_N &\leftrightarrow m_i=\tfrac{1}{2},\quad 1\le i\le N.
\end{split}\end{equation}

The representation $\varpi_1$ is not generating, but representations which appear as subrepresentations of some tensor power of $\varpi_1$ are precisely these $\lambda\in \bf{P}^+$ with $m_i\in \ZZ$ (\cite[Proposition 20, page 210]{KlimykSchmudgen}). In particular, $\varpi_i$ with $1\le i \le N-2$ appear as subrepresentations of tensor powers of $\varpi_1$, and as in type $B_N$ we see that any generating set $Y\subseteq \bf{P}^+$ must contain $\mu=\sum_{i=1}^{N}\mu_i \varpi_i\in Y$ with $\mu_{N-1}\neq 0$ or $\mu_N\neq 0$. From Corollary \ref{cor1} follows that $\omega(R(G_q))\ge q^{-4 \la \varpi_N|\rho\ra}=q^{-N(N-1)}$. The contents of \cite[Page272]{BourbakiLie4-6} and the rule \cite[Equation (14) page 210]{KlimykSchmudgen} can be used to deduce that $X=\{\varpi_1,\varpi_{N-1},\varpi_N\}$ is generating (in particular it is symmetric). Since $\la \varpi_1|\rho\ra=N-1\le  (N-1)\tfrac{N}{4}=\la\varpi_N|\rho\ra $ we obtain $\omega_X(R(G_q))=q^{-N(N-1)}$ and the claim follows.\\

In exceptional types, we calculate decomposition of tensor products using \emph{Sage} \cite{sagemath}. According to the documentation \cite{SageDocumentation}, Sage's realisation in types $E-G$ is in agreement with \cite{BourbakiLie4-6}. Let use note however that while in types $E_6,E_7,E_8,F_4$ Sage makes the same choice of positive roots as \cite{BourbakiLie4-6}, in type $G_2$ the choice is different. In this case, we will follow Sage's convention. Furthermore, in type $F_4$ we have to rescale inner product, so that short roots have square-length equal to $2$.
\\

(Type $E_6$) 

We directly calculate that $(\la \varpi_1|\rho\ra,\dotsc,\la\varpi_6|\rho\ra)=(8,11,15,21,15,8)$. Consequently we have $\min_{1\le i \le 6} \la \varpi_i|\rho\ra =\la \varpi_1|\rho\ra = \la \varpi_6|\rho\ra =8$. It follows from \cite[page 276]{BourbakiLie4-6} that $\ov{\varpi_1}=\varpi_6$. We claim that $X=\{\varpi_1,\varpi_6\}$ is generating. Indeed, since
\[\begin{split}
\varpi_1\otimes \varpi_1&=\varpi_3\oplus \varpi_6 \oplus (2\varpi_1),\quad 
\varpi_6\otimes \varpi_6=\varpi_1\oplus \varpi_5 \oplus (2\varpi_6),\quad 
\varpi_2,\varpi_4\subseteq \varpi_1\otimes \varpi_3,
\end{split}\]
we see that every $\varpi_i\,(1\le i \le 6)$ is a subrepresentation of some tensor product of representations from $X$. It follows that $X$ is generating. Consequently $\omega(R(G_q))=\omega_{X}(R(G_q))=q^{-32}$.\\

(Type $E_7$) 

We have $(\la\varpi_1|\rho\ra,\dotsc,\la\varpi_7|\rho\ra)=(17,\tfrac{49}{2},33,48,\tfrac{75}{2},26,\tfrac{27}{2})$. Thus $\min_{1\le i \le 7}\la \varpi_i|\rho\ra = \la \varpi_7|\rho\ra = \tfrac{27}{2}$. It follows that each $\varpi_i$ is self-contragradient (which has been also recorded in \cite[Page 281]{BourbakiLie4-6}). We claim that $X=\{\varpi_7\}$ is generating.  Using Sage we find
\[\begin{split}
\varpi_1,\varpi_6 &\subseteq \varpi_7\otimes\varpi_7,\quad 
\varpi_2 \subseteq \varpi_1\otimes \varpi_7,\quad 
\varpi_3 \subseteq \varpi_1\otimes \varpi_6\\
\varpi_4&\subseteq \varpi_1\otimes \varpi_3,\quad
\varpi_5 \subseteq \varpi_4\otimes \varpi_7.
\end{split}\]
Consequently $\omega(R(G_q))=\omega_{X}(R(G_q))=q^{-54}$.\\

(Type $E_8$) 

We have $(\la \varpi_1|\rho\ra ,\dotsc,\la \varpi_8|\rho\ra)=(46,68,91,135,110,84,57,29)$, so that $\min_{1\le i \le 8}\la \varpi_i|\rho\ra = \la \varpi_8|\rho\ra =29$. We again see that each $\varpi_i$ is self-contragradient. We claim that $X=\{\varpi_8\}$ is generating, which follows from
\[
\varpi_1,\varpi_7\subseteq \varpi_8\otimes\varpi_8,\quad
\varpi_2, \varpi_3, \varpi_6 \subseteq \varpi_1\otimes \varpi_7,\quad
\varpi_4,\varpi_5  \subseteq \varpi_3\otimes\varpi_6.
\]
Consequently $\omega(R(G_q))=\omega_{X}(R(G_q))=q^{-116}$.\\

(Type $F_4$) 

In this case we have to rescale the inner product (multiply by $2$) to match it with our conventions. We have $(\la\varpi_1|\rho\ra,\dotsc,\la\varpi_4|\rho\ra )=(16,30,21,11)$. Consequently $\min_{1\le i \le 4}\la\varpi_i|\rho\ra =\la\varpi_4|\rho\ra=11$ and each $\varpi_i$ is self-contragradient. It follows from
\[
\varpi_1, \varpi_3 \subseteq \varpi_4\otimes\varpi_4,\quad 
\varpi_2\subseteq \varpi_1\otimes\varpi_1
\]
that $X=\{\varpi_4\}$ is generating. Thus $\omega(R(G_q))=\omega_{X}(R(G_q))=q^{-44}$.\\

(Type $G_2$) 

In this case we follow the conventions of Sage when it comes to the choice and labelling of simple roots. We have $(\la \varpi_1|\rho\ra,\la\varpi_2|\rho\ra)=(5,9)$. As $\varpi_2\subseteq \varpi_1\otimes\varpi_1$ we conclude that the set $X=\{\varpi_1\}$ is symmetric, generating and $\omega(R(G_q))=\omega_{X}(R(G_q))=q^{-20}$.

\end{proof} 

\begin{remark}
	Note once again that the computations with Sage were only used to establish decompositions of tensor products of representations corresponding to fundamental weights, so that we could determine whether certain sets of representations are generating.
\end{remark}

\section{The uniform exponential growth rate for the fusion algebra of $U_F^+$}\label{sec:UF}

Let $N\ge 2, F\in GL_N(\CC)$ and $U_F^+$ be the associated free unitary quantum group (see \cite{BanicaUnitary} and \cite[Section 6.4]{Timmermann}). Since $U_{\lambda F}^+=U_F^+$ for any $\lambda>0$, we may and will assume that the matrix $F$ satisfies $\Tr(F^*F)=\Tr((F^*F)^{-1})$. In this section we will show that $R(U_F^+)$ has uniform exponential growth. We will also calculate the value of $\omega(R(U_F^+))$ and show that it is attained for the canonical generating set. In particular, we recover thus some of the results of \cite{BanicaVergnioux} for $U_N^+$. Finally, we will describe the precise asymptotics of $\omega(R(U_F^+))$ with respect to  the quantum dimension of the fundamental representation of $U_F^+$ tending to infinity.

Let us recall basic facts regarding  the representation theory of $U_F^+$, established in \cite{BanicaUnitary}. The set $\Irr(U_F^+)$ can be identified with the free product of monoids $\ZZ_+\star \ZZ_+=\la \alpha,\ov\alpha\ra $ in such a way that $\alpha$ corresponds to the fundamental representation, the empty word $e$ corresponds to the trivial representation and $w\mapsto \ov{w}$ to taking the conjugate representation. The fusion rules of $U_F^+$ are given by
\begin{equation}\label{eq19}
x\otimes y = \bigoplus_{\overset{a,b,c\in \ZZ_+\star\ZZ_+\colon}{x=ac, \,\ov{c}b=y}}ab,\qquad x,y\in \ZZ_+\star\ZZ_+.
\end{equation}

For $n\in\NN,\eps\in\{+1,-1\}$ denote $w^n_\eps=\alpha^{\eps}\alpha^{-\eps}\cdots$ ($n$ letters), where $\alpha^{+1}=\alpha, \alpha^{-1}=\ov\alpha$; set also $w^0_\eps=e$. Let $s(w),t(w)$ denote respectively the \emph{source} and the \emph{target} of a word $w \in\ZZ_+\star\ZZ_+$, so e.g.~$t(w^n_\eps)=\alpha^\eps$ for any $n \in \NN$, whereas $s(w^n_\eps)=\alpha^\eps$ if $n$ is odd and $s(w^n_\eps)=\alpha^{-\eps}$ if $n$ is even. Any $\gamma\in \Irr(U_F^+)\setminus\{e\}$ is of the form $\gamma=w^{k_1}_{\eps_{1}}\cdots w^{k_m}_{\eps_m}=
w^{k_1}_{\eps_{1}}\otimes\cdots \otimes w^{k_m}_{\eps_m}$ for the unique $m\in\NN, k_i\in \NN$ and $\eps_i\in \{+1,-1\}$ such that $s(w^{k_i}_{\eps_i})=t(w^{k_{i+1}}_{\eps_{i+1}})$. As usual, let $d\colon\ZZ_+\star \ZZ_+ \to \RR_+$ denote the quantum dimension function. We have
\begin{equation}\label{eq20}
d(w^k_\eps)=[k+1]_q,\qquad k\in \ZZ_+,\eps\in\{+1,-1\}
\end{equation}
where $0<q\le 1$ is defined by $d(\alpha)=q+q^{-1}$ (see \cite[Lemma 4.12, Lemma 4.13]{KrajczokWasilewski}). Since we assume matrix $F$ satisfies the normalisation condition $\Tr(F^*F)=\Tr((F^*F)^{-1})$, we have $\uprho_\alpha=(F^*F)^{t}$ \cite[Example 1.4.2]{NeshveyevTuset} and $q$ is determined by $q^{-1}+q=\Tr(F^*F)$.\\

Our first aim is to show that $\omega(R(U_F^+))=\omega_{X_{can}}(R(U_F^+))$, where $X_{can}=\{\alpha,\ov\alpha\}$. To this end we need to establish several lemmas.

\begin{lemma}\label{lemma10}
For $m \in \NN$, $k,k_1,\dotsc,k_m\in \NN$ such that $k=k_1+\cdots +k_m$ and any $\eps,\eps_1,\dotsc,\eps_m\in\{+1,-1\}$ we have $d(w^k_\eps) \le d(w^{k_1}_{\eps_1})\cdots 
d(w^{k_m}_{\eps_m})$. In particular $d(w^k_\eps)\le d(\alpha)^k$.
\end{lemma}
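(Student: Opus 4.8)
The plan is to first observe, via \eqref{eq20}, that $d(w^k_\eps)=[k+1]_q$ depends only on $k$ and not on the sign $\eps$. Hence the signs $\eps,\eps_1,\dots,\eps_m$ play no role, and the asserted inequality reduces to the purely arithmetic statement that for $q\in(0,1]$, $m\in\NN$ and $k_1,\dots,k_m\in\NN$ with $k=k_1+\cdots+k_m$ one has
\[
[k+1]_q \le \prod_{i=1}^m [k_i+1]_q.
\]

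The key step will be the two-factor case: for $a,b\in\ZZ_+$ I claim $[a+b+1]_q\le [a+1]_q[b+1]_q$. I would establish this from the elementary identity
\[
[a+1]_q[b+1]_q=[a+b+1]_q+[a]_q[b]_q,
\]
which follows by a short direct computation with the defining formula $[n]_q=\tfrac{q^{-n}-q^n}{q^{-1}-q}$: writing $t=q^{-1}$, both sides expand to the same Laurent polynomial in $t$ (indeed the difference of the two products equals $\tfrac{(t^a-t^{-a})(t^b-t^{-b})}{(t-t^{-1})^2}=[a]_q[b]_q$). Since $[a]_q,[b]_q\ge 0$ for $a,b\ge 0$ and $q\in(0,1]$, the claimed inequality is immediate. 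Alternatively, one can avoid any computation: by Lemma \ref{dimensions} and the $SU_q(2)$ fusion rule \eqref{eq16}, the product $[a+1]_q[b+1]_q=d(a)d(b)$ decomposes as a sum of quantum dimensions $[j+1]_q$ over $j\in\{|a-b|,|a-b|+2,\dots,a+b\}$, all of which are positive, with top term $[a+b+1]_q$.

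I would then conclude by induction on $m$. The case $m=1$ is trivial; for the inductive step I apply the two-factor inequality with $a=k_1+\cdots+k_{m-1}$ and $b=k_m$, and then the inductive hypothesis to the first factor:
\[
[k+1]_q=[a+b+1]_q\le [a+1]_q[b+1]_q\le \Bigl(\prod_{i=1}^{m-1}[k_i+1]_q\Bigr)[k_m+1]_q=\prod_{i=1}^m[k_i+1]_q.
\]
Finally, the displayed special case $d(w^k_\eps)\le d(\alpha)^k$ follows by taking $m=k$ and $k_1=\cdots=k_k=1$, since then each factor equals $[2]_q=d(\alpha)$. I do not anticipate any genuine obstacle here: the only computation is the verification of the $q$-number identity, which is routine, and the rest is a one-line induction; moreover the fusion-rule alternative supplies the two-factor inequality essentially for free from facts already recorded for $R(SU_q(2))$.
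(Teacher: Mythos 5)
Your proof is correct, but it takes a genuinely different route from the paper's. You use \eqref{eq20} to strip away the signs and reduce everything to the $q$-number inequality $[k+1]_q\le\prod_{i=1}^m[k_i+1]_q$, which you obtain from the exact identity $[a+1]_q[b+1]_q=[a+b+1]_q+[a]_q[b]_q$ (or from the $SU_q(2)$ fusion rule \eqref{eq16}) together with an induction on $m$. The paper instead stays entirely inside the fusion ring of $U_F^+$: since by \eqref{eq20} the right-hand side is independent of the signs, one may choose $\eps_1=\eps$ and the remaining $\eps_i$ so that $s(w^{k_i}_{\eps_i})\neq t(w^{k_{i+1}}_{\eps_{i+1}})$; then the concatenation $(w^{k_1}_{\eps_1})\cdots(w^{k_m}_{\eps_m})$ is literally the alternating word $w^k_\eps$, which by the fusion rules \eqref{eq19} (taking $c=e$ at each step) is a subobject of $w^{k_1}_{\eps_1}\otimes\cdots\otimes w^{k_m}_{\eps_m}$, so multiplicativity and positivity of $d$ give the inequality in one stroke, with no induction and no computation. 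Your fusion-rule variant of the two-factor case is morally the same monotonicity argument, just run inside $R(SU_q(2))$ two factors at a time rather than inside $R(U_F^+)$ all at once. What your route buys is an elementary, purely arithmetic argument with an exact defect term $[a]_q[b]_q$; what the paper's buys is brevity and the fact that it uses nothing beyond the fusion rules themselves. One pedantic remark: your Laurent-polynomial verification implicitly assumes $q<1$ (it divides by $t-t^{-1}$); at $q=1$ the identity is the trivial $(a+1)(b+1)=(a+b+1)+ab$ under the convention $[x]_1=x$, so nothing is lost.
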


\begin{proof}
Observe that the number $d(w^{k_1}_{\eps_1})\cdots 
d(w^{k_m}_{\eps_m})$ does not depend on $\eps_i$'s. Hence we can assume without loss of generality that $\eps_1=\eps$ and $s(w^{k_i}_{\eps_i})\neq  t(w^{k_{i+1}}_{\eps_{i+1}})$ for $1\le i\le m-1$. Then
\[
w^k_\eps=
(w^{k_1}_{\eps_1})\cdots(w^{k_m}_{\eps_m})\subseteq w^{k_1}_{\eps_1}\otimes\cdots
\otimes w^{k_m}_{\eps_m}
\]
by \eqref{eq19} and the first claim follows. The second claim holds, as we can take $k_1,\dotsc,k_m=1$.
\end{proof}

For $x\in \Irr(U_F^+),k\in\NN$ set $\widetilde{w}(x)^k_{+1}=x \ov{x} \cdots $ ($k$ times) and $\widetilde{w}(x)^k_{-1}=\ov{x} x \cdots$ ($k$ times). In what follows, it will be convenient to write also $x^{+1}=x, x^{-1}=\ov{x}$. The next two technical lemmas show (intuitively) that after the substitution $\alpha\mapsto x, \ov\alpha\mapsto \ov x$, the quantum dimension must grow. They are crucial for the proof of Proposition \ref{prop13}.

\begin{lemma}\label{lemma11}
Let $x\in \Irr(U_F^+)$, $x \neq e$. 
For $k\in\NN,\eps\in \{+1,-1\}$ we have $d(\widetilde{w}(x)^k_\eps) \ge d(w^k_\eps)=[k+1]_q$.
\end{lemma}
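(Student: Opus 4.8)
The plan is to deduce the statement from Lemma~\ref{lemma10} together with the fact that the quantum dimension of a reduced word is the \emph{product} of the $q$-dimensions of its alternating blocks. Granting this, the word $\widetilde{w}(x)^k_\eps$, having letter-length $k|x|\ge k$, cannot have dimension below that of the single alternating word of the same length, and the latter is $[k|x|+1]_q\ge[k+1]_q$.

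First I would record the multiplicativity of $d$ at a \emph{genuine break}: if $u,v\in\Irr(U_F^+)\setminus\{e\}$ are such that the last letter of $u$ equals the first letter of $v$, then $u\otimes v$ is irreducible and equal to the concatenation $uv$, whence $d(uv)=d(u)d(v)$. This is read off directly from \eqref{eq19}: a summand with $c\neq e$ requires $\ov{c}$ to be a prefix of $v$, but $c$ is a nonempty suffix of $u$, so $\ov{c}$ begins with the conjugate of the last letter of $u$, that is with the conjugate of the first letter of $v$; this differs from the first letter of $v$, so $\ov{c}$ is not a prefix of $v$ and only the term $c=e$ (giving $ab=uv$) survives. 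Applying this at every junction of the reduced form $\gamma=w^{k_1}_{\eps_1}\cdots w^{k_m}_{\eps_m}$ --- where by definition $s(w^{k_i}_{\eps_i})=t(w^{k_{i+1}}_{\eps_{i+1}})$, i.e.\ the last letter of each block agrees with the first letter of the next --- and invoking \eqref{eq20}, an induction on $m$ yields
\[
d\bigl(w^{k_1}_{\eps_1}\cdots w^{k_m}_{\eps_m}\bigr)=\prod_{i=1}^m[k_i+1]_q .
\]

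With this formula in hand, let $\gamma\in\Irr(U_F^+)\setminus\{e\}$ have reduced form with blocks of lengths $k_1,\dots,k_m$ and put $n=k_1+\cdots+k_m$, the total number of letters of $\gamma$. Since $n=\sum_i k_i$, Lemma~\ref{lemma10} gives
\[
d(\gamma)=\prod_{i=1}^m[k_i+1]_q=\prod_{i=1}^m d\bigl(w^{k_i}_{\eps_i}\bigr)\ge d\bigl(w^{n}_{+1}\bigr)=[n+1]_q ;
\]
in words, among all irreducibles with a fixed number $n$ of letters the alternating word $w^n$ has the least dimension. Finally, $\widetilde{w}(x)^k_\eps$ is a concatenation of $k$ copies of $x^{\pm1}$, so it has exactly $k|x|$ letters, and $|x|\ge1$ because $x\neq e$. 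Taking $n=k|x|\ge k$ above and using the monotonicity of $k\mapsto[k+1]_q$, I obtain
\[
d\bigl(\widetilde{w}(x)^k_\eps\bigr)\ge[k|x|+1]_q\ge[k+1]_q=d(w^k_\eps),
\]
which is the claim (the case $\eps=-1$ follows identically, or from $\widetilde{w}(x)^k_{-1}=\widetilde{w}(\ov{x})^k_{+1}$ with $\ov{x}\neq e$).

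The only delicate step is the multiplicativity at genuine breaks, i.e.\ extracting irreducibility of $u\otimes v$ from the free-monoid fusion rule \eqref{eq19}; this is where one must argue carefully with suffixes of $u$ and prefixes of $v$. Once the product formula is available, the remaining two steps are immediate consequences of Lemma~\ref{lemma10} and the monotonicity of the $q$-integers.
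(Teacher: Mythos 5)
Your proof is correct, and it takes a genuinely different route from the paper's. The paper argues by cases on the number $m$ of alternating blocks of $x$: for $m=1$ it identifies $\widetilde{w}(x)^k_{\eps}$ exactly as a single alternating word $w^{kk_1}_{\eps_1}$, while for $m\ge 2$ it tracks explicitly how consecutive copies of $x$ and $\ov{x}$ glue at their $k-1$ junctions, rewrites the resulting word as a tensor product of alternating blocks, discards some of them, and only then applies Lemma \ref{lemma10}. You instead make explicit the multiplicativity of $d$ at genuine breaks --- your suffix/prefix argument from \eqref{eq19} is exactly right: a summand with $c\neq e$ would force the first letter of $\ov{c}$, which is the conjugate of the last letter of $u$, to coincide with the first letter of $v$ --- then deduce from it and \eqref{eq20} the block-product formula $d(w^{k_1}_{\eps_1}\cdots w^{k_m}_{\eps_m})=[k_1+1]_q\cdots[k_m+1]_q$ (which the paper records only later, as \eqref{eq23}), and combine it with Lemma \ref{lemma10} into the general principle that every nontrivial irreducible with $n$ letters has quantum dimension at least $[n+1]_q$. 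The lemma is then immediate: $\widetilde{w}(x)^k_\eps$ has $k$ times as many letters as $x$, hence at least $k$ letters, and the $q$-integers are monotone in $n$. Your route buys a complete elimination of the gluing bookkeeping (how the copies of $x^{\pm 1}$ merge is irrelevant; only the fact that concatenation in the free monoid preserves letter count matters), together with a reusable intermediate statement; the paper's computation buys sharper constants (e.g.\ the exact value $[kk_1+1]_q$ when $m=1$ and the bound $[k(k_1+k_m)+1]_q$ when $m\ge 2$), which are not needed for this lemma. One caveat on scope: your general principle is too weak to replace the paper's finer analysis in Lemma \ref{lemma12} --- already for $x=\alpha$ it yields only $[l_1+\cdots+l_p+1]_q$, which lies below the required $\prod_{i}[l_i+1]_q$ --- so the simplification is specific to the present statement.
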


\begin{proof}
Assume first that $\eps=+1$. 
We can write $x=w^{k_1}_{\eps_1}\cdots w^{k_m}_{\eps_m}$ for some $m\in\NN, k_i\ge 1,\eps_i\in \{+1,-1\}$ such that $s(w^{k_i}_{\eps_i})=t(w^{k_{i+1}}_{\eps_{i+1}})\,(1\le i \le m-1)$. Denote $\alpha^{\eta_i}=s(w^{k_i}_{\eps_i})$. The claim is trivial if $k=1$, hence assume $k\ge 2$. Assume first that $m=1$. Then
\[
\widetilde{w}(x)^k_{+1}=
\underbrace{x \ov{x}\cdots x^{(-1)^{k+1}}}_{k}=
\underbrace{(w^{k_1}_{\eps_1})\,
(w^{k_1}_{-\eta_1})\,
(w^{k_1}_{\eps_1})\,\cdots\,}_{k}
\]
Since $s(w^{k_1}_{\eps_1})=\alpha^{\eta_1}$ and $s(w^{k_1}_{-\eta_1})=\alpha^{-\eps_1}$, we have
\[
\widetilde{w}(x)^k_{+1}=
w_{\eps_1}^{k  k_1}\quad\Rightarrow\quad 
d(\widetilde{w}(x)^k_{+1})=
[k k_1+1]_q\ge [k+1]_q.
\]
 If $m\ge 2$ then
\[\begin{split}
\widetilde{w}(x)^k_{+1}&=
\underbrace{x \ov{x}\cdots x^{(-1)^{k+1}}}_{k}=
\underbrace{(w^{k_1}_{\eps_1}\cdots w^{k_m}_{\eps_{m}})\,
(w^{k_m}_{-\eta_m} \cdots w^{k_1}_{-\eta_1})\,(w^{k_1}_{\eps_1}\cdots w^{k_m}_{\eps_{m}})\,\cdots\,
}_{k}.
\end{split}\]
Between the brackets we see neighbouring elements $w^{k_m}_{\eps_m},w^{k_m}_{-\eta_m}$ and $w^{k_1}_{-\eta_1},w^{k_1}_{\eps_1}$ which glue to $w^{2 k_m}_{\eps_m}$ and $w^{2 k_1 }_{-\eta_1}$ (there are $k-1$ such gluings). Hence (assuming $k$ is even) we obtain
\[
\widetilde{w}(x)^k_{+1}=
w^{k_1}_{\eps_1}
f w^{2k_m}_{\eps_m} \ov{f}
w^{2 k_1 }_{-\eta_1} \cdots 
w^{2 k_1}_{-\eta_1} f
w^{2 k_m}_{\eps_m} \ov{f} w^{k_1}_{-\eta_1}
\]
where $f=w^{k_2}_{\eps_2} \cdots w^{k_{m-1}}_{\eps_{m-1}}\in \ZZ_+\star \ZZ_+$ if $m\ge 3$ or $f=e$ if $m=2$. By analysing \eqref{eq19} we see that in fact
\[
\widetilde{w}(x)^k_{+1}=
w^{k_1}_{\eps_1}\otimes
f \otimes w^{2k_m}_{\eps_m} \otimes \ov{f}
\otimes w^{2 k_1 }_{-\eta_1} \otimes\cdots \otimes
w^{2 k_1}_{-\eta_1}\otimes f\otimes
w^{2 k_m}_{\eps_m}\otimes \ov{f}\otimes w^{k_1}_{-\eta_1}.
\]
Thus using Lemma \ref{lemma10} we deduce that 
\[\begin{split}
d(\widetilde{w}(x)^k_{+1})&\ge 
d(
w^{k_1}_{\eps_1}\otimes
 w^{2k_m}_{\eps_m}\otimes
w^{2 k_1 }_{-\eta_1} \otimes\cdots \otimes
w^{2 k_1}_{-\eta_1}\otimes
w^{2 k_m}_{\eps_m} \otimes w^{k_1}_{-\eta_1})\\
&\ge 
d(w^{k_1 + 2k_m \frac{k}{2} + 2k_1 (\frac{k}{2}-1)+k_1}_{\eps_1})=
[k (k_m+k_1)+1]_q \ge 
[k+1]_q.
\end{split}\]
The proofs of other cases ($k$ odd, $m\ge 2$ and $\eps=-1$) are virtually the same.
\end{proof}

\begin{lemma}\label{lemma12}
Take $x\in \Irr(U_F^+)$ which is not of the form $w^{2k}_{\eps}$ with $k\in\ZZ_+,\eps\in\{+1,-1\}$. For $w^{l_1}_{\eps_1}\cdots w^{l_p}_{\eps_p}\in \ZZ_+\star \ZZ_+$ with $p, l_1,\dotsc,l_p\ge 1$ and $\eps_1,\dotsc,\eps_p\in\{+1,-1\}$ such that $s(w^{l_i}_{\eps_i})=t(w^{l_{i+1}}_{\eps_{i+1}})\,(1\le i \le p-1)$ we have
\[
d(\widetilde{w}(x)^{l_1}_{\eps_1}\cdots 
\widetilde{w}(x)^{l_p}_{\eps_p} ) \ge 
d(w^{l_1}_{\eps_1}\cdots 
w^{l_p}_{\eps_p} ).
\]
\end{lemma}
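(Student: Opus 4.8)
The plan is to read the left-hand side as the image of the reduced word $w:=w^{l_1}_{\eps_1}\cdots w^{l_p}_{\eps_p}$ under the letter substitution $\alpha\mapsto x$, $\ov\alpha\mapsto\ov x$: indeed $\widetilde{w}(x)^{l_i}_{\eps_i}$ is obtained from $w^{l_i}_{\eps_i}$ by this substitution, and $W:=\widetilde{w}(x)^{l_1}_{\eps_1}\cdots\widetilde{w}(x)^{l_p}_{\eps_p}$ is the corresponding element of $\Irr(U_F^+)$. Two inputs drive the argument. The first is Lemma \ref{lemma11}, which gives the block-by-block bound $d(\widetilde{w}(x)^{l_i}_{\eps_i})\ge [l_i+1]_q=d(w^{l_i}_{\eps_i})$. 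The second is the multiplicativity of $d$ over the canonical block decomposition, $d(w^{m_1}_{\delta_1}\cdots w^{m_s}_{\delta_s})=\prod_{j}[m_j+1]_q$ (the fact already underlying the proof of Lemma \ref{lemma11}); in particular $d(w)=\prod_i[l_i+1]_q$, and since each $q$-number is $\ge 1$, deleting blocks only decreases $d$. It therefore suffices to track how the blocks of the individual factors $\widetilde{w}(x)^{l_i}_{\eps_i}$ survive, or merge, inside $W$.

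First I would carry out the junction analysis. Writing $\alpha^{a}$ and $\alpha^{b}$ for the first and last letters of $x$, a direct computation of sources and targets shows that for every $i$ one has $s(\widetilde{w}(x)^{l_i}_{\eps_i})=t(\widetilde{w}(x)^{l_{i+1}}_{\eps_{i+1}})$ \emph{if and only if} $a=b$; here one uses the defining relation of a reduced word, which forces $\eps_{i+1}=\eps_i$ when $l_i$ is odd and $\eps_{i+1}=-\eps_i$ when $l_i$ is even. Thus the behaviour at all $p-1$ junctions is governed by the single dichotomy $a=b$ versus $a\ne b$.

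In the case $a=b$ (which includes every $x$ that is a single block of odd length) no merging occurs, so the block multiset of $W$ is the disjoint union of those of the factors and, by multiplicativity together with Lemma \ref{lemma11},
\[
d(W)=\prod_{i=1}^{p}d(\widetilde{w}(x)^{l_i}_{\eps_i})\ge \prod_{i=1}^{p}[l_i+1]_q=d(w).
\]
In the case $a\ne b$ the hypothesis that $x$ is not of the form $w^{2k}_{\eps}$ forces $x$ to have at least two blocks, since a single block $w^{n}_{\delta}$ has $a\ne b$ precisely when $n$ is even. I would then expand the block structure of each $\widetilde{w}(x)^{l_i}_{\eps_i}$ exactly as in the proof of Lemma \ref{lemma11} — boundary blocks coming from the outer blocks of $x$, doubled blocks of length $2k_1,2k_m\ge 2$ arising at the internal gluings (of which there are of order $l_i$ many), separated by copies of the middle word $f$ and $\ov f$ — and observe that at each junction only the two adjacent boundary blocks merge, into a single block of length $\ge k_1+k_m\ge 2$. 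Using multiplicativity of $d$ and retaining only these doubled blocks and the $p-1$ merged junction blocks, the desired inequality reduces to elementary $q$-number estimates of the type $[k+k'+1]_q\ge[3]_q>[2]_q$ and the submultiplicativity of Lemma \ref{lemma10}; the merged junction blocks supply exactly the factors needed to dominate $\prod_i[l_i+1]_q$, including the tightest cases $l_i=1$ with $x$ having precisely two blocks, where $\widetilde{w}(x)^{l_i}_{\eps_i}$ contributes no interior doubled block at all.

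The main obstacle is this last, $a\ne b$, case: the bookkeeping of which blocks merge and the verification that the resulting product of $q$-numbers still dominates $\prod_i[l_i+1]_q$ is where the real work lies, and it is precisely here that excluding $x=w^{2k}_{\eps}$ is essential. For such an $x$ one has $\ov x=x$, hence $\widetilde{w}(x)^{l}_{+1}=\widetilde{w}(x)^{l}_{-1}$; the substitution forgets the signs $\eps_i$ entirely, all blocks of $W$ acquire the same sign and collapse into a single short block, and $d(W)$ can then drop strictly below $d(w)$ — already for $x=w^{2}_{\eps}$ and $w=w^{1}_{\eps}w^{1}_{\eps}w^{1}_{\eps}$, where $d(W)=[7]_q<[2]_q^{3}=d(w)$ for $q$ close to $1$.
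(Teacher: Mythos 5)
Your overall route is the same as the paper's: the paper also splits according to whether the first and last letters of $x$ coincide (its cases $\eta_m=\delta_1$ and $\eta_m\neq\delta_1$, $m\ge 2$), settles the no-gluing case by multiplicativity of $d$ over blocks together with Lemma \ref{lemma11}, and settles the gluing case by exact block bookkeeping followed by $q$-number estimates and Lemma \ref{lemma10}; your junction analysis (gluing occurs at every junction, or at none, according to whether $a\neq b$ or $a=b$) is correct, and your closing counterexample is exactly why $x=w^{2k}_{\eps}$ must be excluded.

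However, the final reduction in your $a\neq b$ case fails as literally stated. Retaining \emph{only} the doubled blocks (there are $\sum_i l_i-p$ of them) and the $p-1$ merged junction blocks is not enough: when all $l_i=1$ the doubled blocks disappear entirely, and if moreover $x$ has $m\ge 3$ blocks with extreme blocks of lengths $k_1=k_m=1$, each junction block contributes only $[k_1+k_m+1]_q=[3]_q$. For $p=2$ you are then left with $[3]_q$, whereas $d(w^{1}_{\eps_1}w^{1}_{\eps_2})=[2]_q^2=[3]_q+1$. Concretely, take $x=w^{1}_{+1}w^{2}_{+1}w^{1}_{-1}=\alpha\alpha\ov{\alpha}\,\ov{\alpha}$ and $w=w^1_{+1}w^1_{+1}=\alpha\alpha$: the word $xx$ has blocks of lengths $1,2,2,2,1$, so your retained product is the single factor $[3]_q<[2]_q^2=d(w)$. (Your declared tightest case, $x$ with exactly two blocks, is in fact unproblematic, because there parity forces $k_1+k_m$ to be odd, hence $\ge 3$, so each junction block is $\ge[4]_q\ge[2]_q^2$; the danger is $m\ge 3$.) The repair is precisely what the paper does: keep, in addition, the two extremal blocks of the concatenated word, whose dimensions are always $\ge[2]_q$. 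Since the doubled and junction blocks each contribute $\ge[3]_q$ and there are $(\sum_i l_i-p)+(p-1)$ of them, this yields
\[
d\bigl(\widetilde{w}(x)^{l_1}_{\eps_1}\cdots\widetilde{w}(x)^{l_p}_{\eps_p}\bigr)\ \ge\ [2]_q^2\,[3]_q^{\,l_1+\cdots+l_p-1}\ \ge\ [2]_q^{\,l_1+\cdots+l_p}=d(\alpha)^{l_1+\cdots+l_p}\ \ge\ \prod_{i=1}^{p}[l_i+1]_q = d(w^{l_1}_{\eps_1}\cdots w^{l_p}_{\eps_p}),
\]
the last inequality being Lemma \ref{lemma10}. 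With this single correction your argument is complete and coincides with the paper's proof.
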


\begin{proof}
Lemma \ref{lemma11} establishes Lemma \ref{lemma12} in case $p=1$, so assume $p\ge 2$.

Write $x=w^{k_1}_{\delta_1}\cdots w^{k_m}_{\delta_m}$ with $m,k_1,\dotsc,k_m\ge 1$ and $\delta_i\in \{+1,-1\}$ such that $s(w^{k_i}_{\delta_i})=t(w^{k_{i+1}}_{\delta_{i+1}})$ for $1\le i \le m-1$. Further for $1\le i \le m-1$  set $\eta_i\in\{+1, -1\}$ so that  $\alpha^{\eta_i}=s(w^{k_i}_{\delta_{i}})$. We then have
\begin{equation}\label{eq21}
\widetilde{w}(x)^{l_1}_{\eps_1}\cdots
\widetilde{w}(x)^{l_p}_{\eps_p}=
( \underbrace{x^{\eps_1}x^{-\eps_1} \cdots 
x^{(-1)^{l_1+1} \eps_1}}_{l_1})
( \underbrace{x^{\eps_2}x^{-\eps_2} \cdots 
x^{(-1)^{l_2+1} \eps_2}}_{l_2})
\cdots 
( \underbrace{x^{\eps_p}x^{-\eps_p} \cdots 
x^{(-1)^{l_p+1} \eps_p}}_{l_p}).
\end{equation}
As $(-1)^{l_{i}+1}\eps_i = \eps_{i+1}$,  between the brackets in \eqref{eq21} we see expressions of the form  ``$\dotsc x ) ( x \dotsc$'' or ``$\dotsc \ov{x} ) (\ov{x}\dotsc$''. Observe that since $x\notin\{ w^{2k}_\eps\mid k\in\ZZ_+,\eps\in\{+1,-1\}\}$, it is enough to consider two cases: $\eta_m=\delta_1$ and $\eta_m\neq \delta_1,m\ge 2$.\\

\emph{Case 1: $\eta_m=\delta_1$}. In this situation 
\begin{align*}
\widetilde{w}(x)^{l_1}_{\eps_1}\cdots&
\widetilde{w}(x)^{l_p}_{\eps_p} =\\
&=
( \underbrace{x^{\eps_1}x^{-\eps_1} \cdots 
x^{(-1)^{l_1+1} \eps_1}}_{l_1})\otimes
( \underbrace{x^{\eps_2}x^{-\eps_2} \cdots 
x^{(-1)^{l_2+1} \eps_2}}_{l_2})\otimes
\cdots \otimes
( \underbrace{x^{\eps_p}x^{-\eps_p} \cdots 
x^{(-1)^{l_p+1} \eps_p}}_{l_p})\\
&=
\widetilde{w}(x)^{l_1}_{\eps_1}\otimes\cdots\otimes
\widetilde{w}(x)^{l_p}_{\eps_p}
\end{align*}
hence the claim follows from Lemma \ref{lemma11} and multiplicativity of the dimension function.\\

\emph{Case 2: $\eta_m\neq \delta_1, m\ge 2$}.
In total, in \eqref{eq21} we have
$l_1+\dotsc+l_p -1$ spaces between two $x^{\pm 1}$'s, of which  $p-1$ lie between the brackets and $l_1+\dotsc+l_p-p$ lie inside the brackets. In each of these spaces, a gluing between two $w^\bullet_\bullet$'s happens.

Inside the brackets we see the situation of the kind
\[
x \ov{x}=
w^{k_1}_{\delta_1}\cdots w^{k_m}_{\delta_m}
w^{k_m}_{-\eta_m} \cdots
w^{k_1}_{-\eta_1}=
w^{k_1}_{\delta_1}\cdots w^{k_{m-1}}_{\delta_{m-1}} w^{2 k_m}_{\delta_m}
w^{k_{m-1}}_{-\eta_{m-1}} \cdots
w^{k_1}_{-\eta_1}
\]
or
\[
\ov{x} x =
w^{k_m}_{-\eta_m} \cdots
w^{k_1}_{-\eta_1}
w^{k_1}_{\delta_1}\cdots w^{k_m}_{\delta_m}=
w^{k_m}_{-\eta_m} \cdots
w^{k_2}_{-\eta_2}
w^{2 k_1}_{-\eta_1}
w^{k_2}_{\delta_2}\cdots w^{k_m}_{\delta_m}.
\]
 Between the brackets we have the expressions
\[
xx=w^{k_1}_{\delta_1}\cdots w^{k_m}_{\delta_m}\,
w^{k_1}_{\delta_1}\cdots w^{k_m}_{\delta_m}=
w^{k_1}_{\delta_1}\cdots w^{k_{m-1}}_{\delta_{m-1}}w^{ k_m+k_1}_{\delta_m}\,
w^{k_2}_{\delta_2} \cdots w^{k_m}_{\delta_m}
\]
or 
\[
\ov{x}\, \ov{x} = 
w^{k_m}_{-\eta_m} \cdots
w^{k_1}_{-\eta_1}
w^{k_m}_{-\eta_m} \cdots
w^{k_1}_{-\eta_1}=
w^{k_m}_{-\eta_m} \cdots w^{k_2}_{-\eta_2}
w^{k_1+k_m }_{-\eta_1}
w^{k_{m-1}}_{-\eta_{m-1}} \cdots 
w^{k_1}_{-\eta_1}.
\]
Recall that $d(w^n_{+1})=d(w^n_{-1})\,(n\in\ZZ_+)$. From the above analysis  we conclude, simply counting the elements of the form $xx,  \ov{x} x$, etc.\ inside the expression \eqref{eq21}, that 
\begin{align*}
d( 
&\widetilde{w}(x)^{l_1}_{\eps_1}\cdots
\widetilde{w}(x)^{l_p}_{\eps_p})=
d(w^{h(\eps_1)}_{+1}) \bigl( \prod_{i=2}^{m-1}\, d(w^{k_i}_{\delta_i})\bigr)^{l_1+\cdots + l_p}\,
d(w^{2k_m}_{\delta_m})^{f(\eps_1,l_1)+\cdots+
f(\eps_p,l_p)}
\\
&
d(w^{2k_1}_{-\eta_1})^{g(\eps_1,l_1)+\cdots+
	g(\eps_p,l_p)}
d(w^{k_m+k_1}_{\delta_m})^{ \#\{2 \le i\le p\mid \eps_i=+1\}}
d(w^{k_1+k_m}_{-\eta_1})^{ \#\{2 \le i \le p\mid \eps_i=-1\}}d(w^{h((-1)^{l_p}\eps_p)}_{+1}),
\end{align*}
where the empty product is interpreted as $1$, $h(+1)=k_1$, $h(-1)=k_m$ and $f(+1,l)=\lfloor \tfrac{l}{2}\rfloor$, $f(-1,l)=\lceil \tfrac{l}{2}\rceil -1$, $g(+1, l)=\lceil \tfrac{l}{2}\rceil -1$, $g(-1,l)=\lfloor \tfrac{l}{2}\rfloor$. We can simplify the last expression as follows:
\begin{align*}
d( 
\widetilde{w}&(x)^{l_1}_{\eps_1}\cdots
\widetilde{w}(x)^{l_p}_{\eps_p}) =
[h(\eps_1)+1]_q  \bigl( \prod_{i=2}^{m-1}\, [k_i+1 ]_q\bigr)^{l_1+\cdots + l_p}\,
[2k_m+1]_q^{f(\eps_1,l_1)+\cdots+
f(\eps_p,l_p)}
\\
&\quad\quad\quad\quad\quad
\quad\quad\quad\quad\;\;\;\;\;
[2k_1+1]_q^{g(\eps_1,l_1)+\cdots+
	g(\eps_p,l_p)}
[ k_1+k_m+1]_q^{p-1} [h((-1)^{l_p}\eps_p)+1]_q.
\end{align*}
Observe that $f(\eps,l)+g(\eps,l)=l-1$ and $h(\eps)\ge \min(k_1,k_m)\ge 1$ for $\eps\in \{+1,-1\}$. Using this we can bound $d( \widetilde{w}(x)^{l_1}_{\eps_1}\cdots
\widetilde{w}(x)^{l_p}_{\eps_p})$ in the following way:
\begin{align*}
&d( 
\widetilde{w}(x)^{l_1}_{\eps_1}\cdots
\widetilde{w}(x)^{l_p}_{\eps_p})\ge 
[2]_q  
[2k_m+1]_q^{f(\eps_1,l_1)+\cdots+
f(\eps_p,l_p)}
[2k_1+1]_q^{g(\eps_1,l_1)+\cdots+
g(\eps_p,l_p)}\\
&\quad\quad\quad\quad\quad\quad
\quad\quad\quad\quad\quad\,\;\;\;\;\;\;
[ k_1+k_m+1]_q^{p-1} [2]_q\\
&=
\bigl(\prod_{i=1}^{p-1}
[2k_m+1]_q^{f(\eps_i,l_i)}
[2k_1+1]_q^{g(\eps_i,l_i)} [k_1+k_m+1]_q \bigr)
[2]_q 
[2k_m+1]_q^{f(\eps_p,l_p)}
[2k_1+1]_q^{g(\eps_p,l_p)} [2]_q\\
&\ge 
\bigl(\prod_{i=1}^{p-1}
[3]_q^{f(\eps_i,l_i)}
[3]_q^{g(\eps_i,l_i)} [3]_q \bigr)
[2]_q 
[3]_q^{f(\eps_p,l_p)}
[3]_q^{g(\eps_p,l_p)} [2]_q=
\bigl(\prod_{i=1}^{p-1}
[3]_q^{l_i} \bigr)
[2]_q 
[3]_q^{l_p-1 } [2]_q\\
&\ge 
[2]_q^{l_1+\cdots + l_p}=
d(\alpha)^{l_1+\cdots + l_p}\ge 
d(w^{l_1}_{\eps_1})\cdots d(w^{l_p}_{\eps_p})= 
d(w^{l_1}_{\eps_1} \cdots w^{l_p}_{\eps_p}),
\end{align*}
as claimed.
\end{proof}

Let us now consider the canonical generating set $X_{can}=\{\alpha,\ov\alpha\}$. The following elementary lemma can be easily established using the fusion rules \eqref{eq19}.

\begin{lemma}\label{lemma13}
If $\gamma=w^{k_1}_{\eps_1}\cdots w^{k_m}_{\eps_m}\in \Irr(U_F^+)\setminus\{e\}$ with $m,k_1,\dotsc,k_m\in \NN$ and $\eps_1,\dotsc,\eps_m\in \{+1,-1\}$ such that $s(w^{k_i}_{\eps_i})=t(w^{k_{i+1}}_{\eps_{i+1}})(1\le i \le m-1)$, then $\ell_{X_{can}}(\gamma)=k_1+\cdots+k_m$.
\end{lemma}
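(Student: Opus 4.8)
The plan is to prove the two inequalities $\ell_{X_{can}}(\gamma)\le k_1+\cdots+k_m$ and $\ell_{X_{can}}(\gamma)\ge k_1+\cdots+k_m$ separately, with the whole argument resting on a single \emph{length homomorphism}. First I would introduce $|\cdot|\colon \ZZ_+\star\ZZ_+\to(\ZZ_+,+)$, the unique monoid homomorphism sending each generator $\alpha,\ov\alpha$ to $1$ (it exists by the universal property of the free product of monoids, applied to the identity on each factor); concretely $|w|$ is the number of letters of the word $w$. Since $|\cdot|$ is multiplicative and $|w^{k_i}_{\eps_i}|=k_i$, the quantity to be matched is exactly $|\gamma|=k_1+\cdots+k_m$.

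For the lower bound I would examine how length behaves under the fusion rule \eqref{eq19}. If $\delta\subseteq x\otimes y$ then by \eqref{eq19} we have $\delta=ab$ with $x=ac$ and $\ov{c}b=y$, so that $|x|=|a|+|c|$ and $|y|=|c|+|b|$, whence $|\delta|=|a|+|b|=|x|+|y|-2|c|\le |x|+|y|$. Thus tensoring can never increase the length. By a straightforward induction, any $\gamma$ occurring in a product $x_1\otimes\cdots\otimes x_n$ of generators (each of length $1$) satisfies $|\gamma|\le n$, i.e.\ $k_1+\cdots+k_m\le n$; minimizing over all such presentations gives $\ell_{X_{can}}(\gamma)\ge k_1+\cdots+k_m$.

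For the upper bound I would use the opposite extreme of \eqref{eq19}, namely the summand with $c=e$, which (since then $a=z$, $b=y$) shows that the monoid product $zy$ always occurs inside $z\otimes y$. Writing $\gamma=y_1\cdots y_K$ as a word in its $K=k_1+\cdots+k_m$ letters $y_1,\dots,y_K\in\{\alpha,\ov\alpha\}$ and iterating (tensoring preserves containment of subrepresentations, and $zy\subseteq z\otimes y$ at each stage), I obtain $\gamma\subseteq y_1\otimes\cdots\otimes y_K$; as each $y_j\in X_{can}$ this yields $\ell_{X_{can}}(\gamma)\le K$. Combining the two bounds then finishes the proof.

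I do not expect a serious obstacle: the statement is genuinely elementary once the length homomorphism is available. The only point requiring a little care is to check that $|\cdot|$ is well defined and additive on the free product monoid, so that no spurious cancellation occurs at the junctions $s(w^{k_i}_{\eps_i})=t(w^{k_{i+1}}_{\eps_{i+1}})$ in the canonical expression of $\gamma$ (there two equal letters merge, but $|\alpha^2|=2$, so the total letter count is unchanged), and to read off the two boundary cases $c=e$ and general $c$ of \eqref{eq19} in the correct directions.
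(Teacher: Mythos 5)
Your proof is correct. The paper offers no argument for this lemma at all — it merely states that it "can be easily established using the fusion rules" — and your write-up is exactly the natural filling-in of that hint: the word-length homomorphism on $\ZZ_+\star\ZZ_+$, the inequality $|ab|\le |x|+|y|$ extracted from the fusion rule for the lower bound, and the $c=e$ summand (monoid product sits inside the tensor product) for the upper bound, so it matches the paper's intended approach.
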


As a corollary, we obtain a conclusion that an arbitrary generating set $X$ contains an element $x\neq \ov{x}$ of the form considered in Lemma \ref{lemma12}.

\begin{lemma}\label{lemma14}\noindent
\begin{enumerate}
\item Take $x\in \Irr(U_F^+)$. If $\ell_{X_{can}}(x)\in 2\ZZ_++1$, then $\ov{x}\neq x$.
\item Let $X\subseteq \Irr(U_F^+)$ be a finite generating set. Then there is $x\in X$ with odd length $\ell_{X_{can}}(x)$.
\end{enumerate}
\end{lemma}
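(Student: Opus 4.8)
The plan is to identify $\ell_{X_{can}}$ with the number of letters in the reduced word. By Lemma \ref{lemma13}, for $\gamma = w^{k_1}_{\eps_1}\cdots w^{k_m}_{\eps_m}$ we have $\ell_{X_{can}}(\gamma)=k_1+\cdots+k_m$, which is exactly the word length of $\gamma$ viewed as an element of the free monoid $\ZZ_+\star\ZZ_+=\la\alpha,\ov\alpha\ra$ on the two letters $\alpha,\ov\alpha$. The conjugation $\gamma\mapsto\ov\gamma$ is the antimultiplicative involution determined by $\alpha\mapsto\ov\alpha$, so it acts on words by reversing the order of letters and swapping $\alpha\leftrightarrow\ov\alpha$. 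Two consequences I would record first are that $\ell_{X_{can}}$ is additive under the monoid product and invariant under conjugation.

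For part (1) I would write $x=\ell_1\cdots\ell_n$ with each $\ell_j\in\{\alpha,\ov\alpha\}$ and $n=\ell_{X_{can}}(x)$. Then $\ov x=\ov{\ell_n}\cdots\ov{\ell_1}$, and the equality $x=\ov x$ amounts to $\ell_j=\ov{\ell_{n+1-j}}$ for all $j$. When $n$ is odd the index $j=\tfrac{n+1}{2}$ is fixed by $j\mapsto n+1-j$, forcing $\ell_j=\ov{\ell_j}$; since $\ov\alpha\neq\alpha$ this is impossible, so $\ov x\neq x$.

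For part (2) I would argue by contradiction through a parity invariant. Set $p(\gamma)=\ell_{X_{can}}(\gamma)\bmod 2$. The key step is that $p$ is additive along the fusion rule \eqref{eq19}: if $\mu=ab$ is a component of $x\otimes y$ with $x=ac$ and $\ov c b=y$, then additivity of word length gives $\ell_{X_{can}}(\mu)=\ell_{X_{can}}(a)+\ell_{X_{can}}(b)$, while $\ell_{X_{can}}(x)+\ell_{X_{can}}(y)=\ell_{X_{can}}(a)+2\ell_{X_{can}}(c)+\ell_{X_{can}}(b)$ by conjugation-invariance $\ell_{X_{can}}(\ov c)=\ell_{X_{can}}(c)$; hence $p(\mu)=p(x)+p(y)$. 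By induction every irreducible component of a tensor product $x_1\otimes\cdots\otimes x_n$ has parity $\sum_i p(x_i)$. Assuming for contradiction that every $x\in X$ has even length, all such components would have even length; but the fundamental representation $\alpha$ has length $1$, so $\alpha$ could never be contained in a product of elements of $X$, contradicting that $X$ generates. Hence some $x\in X$ has odd length.

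This argument is largely bookkeeping; the one point requiring attention is the precise behaviour of word length under the fusion rule, which I would verify directly from \eqref{eq19}, keeping in mind that in $\ZZ_+\star\ZZ_+$ concatenation may merge adjacent syllables of the same type but never changes the total letter count. Part (1) then supplies the complementary fact that the odd-length generator produced in part (2) satisfies $\ov x\neq x$, and is in particular not of the form $w^{2k}_\eps$, which is exactly the input needed to apply Lemma \ref{lemma12}.
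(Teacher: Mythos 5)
Your proof is correct and follows essentially the same route as the paper: part (1) is the identical middle-letter argument (writing $x$ as a word in $\alpha,\ov\alpha$ and noting the fixed middle position under the reversal-plus-swap involution), and part (2) is the same parity argument via the fusion rules \eqref{eq19}, which the paper states tersely and you spell out by verifying $\ell_{X_{can}}(ab)\equiv\ell_{X_{can}}(ac)+\ell_{X_{can}}(\ov c b)\pmod 2$. The only difference is expository: you make explicit the additivity and conjugation-invariance of word length that the paper leaves implicit.
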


\begin{proof}
$(1)$ Take $ x$ with $l=\ell_{X_{can}}(x)\in 2\ZZ_+ +1$. We can write (in a unique way) $x=\alpha^{\eps_1}\cdots \alpha^{\eps_{-1+(l+1)/2}} \alpha^{\eps_{(l+1)/2}}\alpha^{\eps_{1+(l+1)/2}}\cdots \alpha^{\eps_l}$, where $\eps_1,\dotsc,\eps_l\in \{+1,-1\}$ and $\alpha^{+1}=\alpha,\alpha^{-1}=\ov\alpha$. Then $\ov{x}=\alpha^{-\eps_l}\cdots \alpha^{-\eps_{1+(l+1)/2}} \alpha^{-\eps_{(l+1)/2}}\alpha^{-\eps_{-1+(l+1)/2}}\cdots \alpha^{-\eps_1}$ and we see by looking at the middle letter that $\ov{x}\neq x$.

$(2)$ Fusion rules \eqref{eq19} show that if $x_1,x_2,w\in \Irr(U_F^+)$, $w\subseteq x_1\otimes x_2$ and $\ell_{X_{can}}(x_1),\ell_{X_{can}}(x_2)$ are even, then $\ell_{X_{can}}(w)$ is also even. This shows that any generating set $X$ must contain an element with odd length. 
\end{proof}

We are now ready to deduce that the optimal exponential growth rate is obtained by considering the canonical generating set.

\begin{proposition}\label{prop13}
We have $\omega(R(U_F^+))=\omega_{X_{can}}(R(U_F^+))$, where $X_{can}=\{\alpha,\ov\alpha\}$.
\end{proposition}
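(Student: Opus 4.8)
The inequality $\omega(R(U_F^+)) \le \omega_{X_{can}}(R(U_F^+))$ is immediate, since $\omega$ is defined as an infimum over all finite generating sets. So the plan is to establish the reverse inequality $\omega_X(R(U_F^+)) \ge \omega_{X_{can}}(R(U_F^+))$ for an arbitrary finite generating set $X \subseteq \Irr(U_F^+)$, and then take the infimum over $X$. Throughout I write $R = R(U_F^+)$ and use that $\ZZ_+\star\ZZ_+$ is the free monoid on the two letters $\alpha, \ov\alpha$; in particular every $\gamma \neq e$ has a unique reduced expression $\gamma = w^{k_1}_{\eps_1}\cdots w^{k_m}_{\eps_m}$ with $\ell_{X_{can}}(\gamma) = k_1+\cdots+k_m$ by Lemma \ref{lemma13}, and the free (left-cancellative) structure will be used below.

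Fix $X$. First I would invoke Lemma \ref{lemma14}(2) to choose $x \in X$ with $\ell_{X_{can}}(x)$ odd; by Lemma \ref{lemma14}(1) this forces $\ov x \neq x$, and since a word of the form $w^{2k}_{\eps}$ has even length, $x$ is not of this form. This is exactly the hypothesis of Lemma \ref{lemma12}. I then introduce the substitution map $\Phi\colon \Irr(U_F^+) \to \Irr(U_F^+)$, namely the free-monoid endomorphism determined by $\alpha \mapsto x$, $\ov\alpha \mapsto \ov x$ (and $\Phi(e)=e$), so that $\Phi(w^{k_1}_{\eps_1}\cdots w^{k_m}_{\eps_m}) = \widetilde{w}(x)^{k_1}_{\eps_1}\cdots\widetilde{w}(x)^{k_m}_{\eps_m}$.

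The key step is to show that for each $n$ the map $\Phi$ restricts to an injection of $S_{X_{can}}(n)$ into $B_X(n)$ that does not decrease quantum dimension. The dimension estimate $d(\Phi(\gamma)) \ge d(\gamma)$ for $\gamma \in S_{X_{can}}(n)$ is precisely the content of Lemma \ref{lemma12} (with $p=m$ and $l_i = k_i$). For the containment, observe that $\Phi(\gamma)$ is a product of exactly $n = \ell_{X_{can}}(\gamma)$ letters, each equal to $x$ or $\ov x$, both lying in the symmetric set $X$; since the fusion rules \eqref{eq19} (taking $c=e$) show that the monoid product of irreducibles is always a subobject of their tensor product, $\Phi(\gamma)$ is contained in a tensor product of $n$ elements of $X$, whence $\ell_X(\Phi(\gamma)) \le n$ and $\Phi(\gamma) \in B_X(n)$. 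Injectivity is elementary: if $\Phi(\gamma)=\Phi(\gamma')$ then the two words have equal length, so $\gamma$ and $\gamma'$ have equally many letters; were they to differ, cancelling the image of their longest common prefix would leave two equal words beginning respectively with $x$ and with $\ov x$, and comparing their first $\ell_{X_{can}}(x)$ letters (conjugation preserves length) would force $x=\ov x$, contrary to the choice of $x$.

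Granting these three properties, injectivity together with the dimension bound gives
\[
|B_X(n)| = \sum_{\beta \in B_X(n)} d(\beta)^2 \ge \sum_{\gamma \in S_{X_{can}}(n)} d(\Phi(\gamma))^2 \ge \sum_{\gamma \in S_{X_{can}}(n)} d(\gamma)^2 = |S_{X_{can}}(n)|,
\]
so $\sqrt[n]{|B_X(n)|} \ge \sqrt[n]{|S_{X_{can}}(n)|}$ for all $n$. Letting $n\to\infty$, where both limits exist by Lemma \ref{lemma2}, yields $\omega_X(R) = \lim_n \sqrt[n]{|B_X(n)|} \ge \lim_n \sqrt[n]{|S_{X_{can}}(n)|}$; combining this with $\omega_X(R)\ge 1$ and Lemma \ref{lemma3} gives $\omega_X(R) \ge \max\bigl(1,\lim_n \sqrt[n]{|S_{X_{can}}(n)|}\bigr) = \omega_{X_{can}}(R)$, and taking the infimum over $X$ finishes the proof. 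The only genuinely substantial ingredient is Lemma \ref{lemma12}, which is already in hand; the remaining obstacle is the bookkeeping of the last paragraph, and I expect the injectivity of $\Phi$ to be the single point where the oddness of $\ell_{X_{can}}(x)$ (equivalently $x\neq\ov x$) is truly essential.
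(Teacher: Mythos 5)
Your proof is correct and follows essentially the same route as the paper's: both select $x\in X$ of odd $\ell_{X_{can}}$-length via Lemma \ref{lemma14}, use the substitution $\alpha\mapsto x$, $\ov\alpha\mapsto\ov x$ together with Lemma \ref{lemma12} and the freeness of $\ZZ_+\star\ZZ_+$ to embed canonical words dimension-increasingly and injectively into $B_X(n)$, and then compare sizes. The only (immaterial) difference is that you map the sphere $S_{X_{can}}(n)$ and conclude via Lemma \ref{lemma3}, whereas the paper maps $B_{X_{can}}(n)\setminus\{e\}$ and compares balls directly.
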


\begin{proof}
Take an arbitrary finite generating set $X\subseteq \Irr(U_F^+)$ and fix $x\in X$ with $\ell_{X_{can}}(x)\in 2\ZZ_+ +1$. In particular, $x$ is not of the form $w^{2k}_{\eps}$ for $k\in\ZZ_+,\eps\in \{+1,-1\}$, and $\ov{x}\neq x$ (Lemma \ref{lemma14}). Fix $n\in\NN$. By Lemma \ref{lemma13} we have
\[
B_{X_{can}}(n)\setminus\{e\}=\{ w^{k_1}_{\eps_1}\cdots 
w^{k_m}_{\eps_m} \mid 
\eps_1\in\{+1,-1\}, m\in\NN, k_1,\dotsc,k_m\in\NN ,k_1+\dotsc+k_m\le n\},
\]
where the above $\eps_{i}$'s (with $i>1$) are determined by the requirement that $s(w^{k_i}_{\eps_i})=t(w^{k_{i+1}}_{\eps_{i+1}})$; we will also use a similar convention later on. Recall the symbols $\widetilde{w}(x)^{k}_{\eps}$ introduced before Lemma \ref{lemma11}. For $\eps_1\in \{+1,-1\}$, $m\in\NN,k_1, \ldots, k_m \in \NN$ such that $ k_1+\cdots+k_m\le n$ we have 
\[
\ell_X(\widetilde{w}(x)^{k_1}_{\eps_1} 
\cdots
\widetilde{w}(x)^{k_m}_{\eps_m} )\le n \quad \Rightarrow\quad 
\widetilde{w}(x)^{k_1}_{\eps_1} 
\cdots
\widetilde{w}(x)^{k_m}_{\eps_m}\in B_X(n).
\]
Observe also that the elements $\widetilde{w}(x)^{k_1}_{\eps_1} \cdots \widetilde{w}(x)^{k_m}_{\eps_m} $ are pairwise distinct. Indeed, this holds since $\ov{x}\neq x$ and the expression of any $\gamma\in\Irr(U_F^+)=\ZZ_+\star\ZZ_+$ as a word in letters $\alpha,\ov\alpha$ is unique. Using Lemma \ref{lemma12} we obtain
\[\begin{split}
|B_X(n)| &\ge |\{
\widetilde{w}(x)^{k_1}_{\eps_1}\cdots 
\widetilde{w}(x)^{k_m}_{\eps_m} 
 \mid 
\eps_1\in \{+1,-1\},m\in\NN, k_1,\dotsc,k_m\in\NN, k_1+\dotsc+k_m\le n\}|\\
&\ge 
|\{w^{k_1}_{\eps_1}\cdots 
w^{k_m}_{\eps_m} 
 \mid 
\eps_1\in \{+1,-1\},m\in\NN, k_1,\dotsc,k_m\in\NN, k_1+\dotsc+k_m\le n\}|\\
&=
|B_{X_{can}}(n)\setminus \{e\}|=
|B_{X_{can}}(n)|-1.
\end{split}\]
It follows that
\begin{align*}
\omega_X(R(U_F^+))&=
\lim_{n\to\infty} |B_X(n)|^{1/n} \ge 
\liminf_{n\to\infty} (|B_{X_{can}}(n)|-1)^{1/n}\\
&=
\liminf_{n\to\infty} 
|B_{X_{can}}(n)|^{1/n}
\bigl(1-\tfrac{1}{|B_{X_{can}}(n)|}\bigr)^{1/n}=
\omega_{X_{can}}(R(U_F^+)).
\end{align*}
\end{proof}

It remains to calculate $\omega_{X_{can}}(R(U_F^+))$. To spell out our result, let us introduce for $0<q\le 1$ the following polynomial:
\begin{equation}\label{eq22}
P_q(z)=z^3- (2q^{-2} +3+2q^2)z^2 +2(q^{-2}+1+q^2)z -2.
\end{equation}
Observe that as $P_q(1)=-2$ and $\lim_{x\to +\infty}P_q(x)=+\infty$, $P_q$ has a real root strictly greater than $1$. 
The proof of the following theorem shows in particular that the largest real root of $P_q$ is also its largest root in terms of absolute value.

\begin{theorem}\label{thm2}
Let $F\in GL_N(\CC)\,(N\ge 2)$ be such that $\Tr(F^*F)=\Tr( (F^*F)^{-1})$ and define $0<q\le 1$ via $q+q^{-1}=\Tr(F^*F)$. Then $\omega(R(U_F^+))=r_q>1$, where $r_q$ is the largest real root of the polynomial $P_q$ defined in \eqref{eq22}. In particular $R(U_F^+)$ has uniform exponential growth.
\end{theorem}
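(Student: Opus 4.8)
The plan is to invoke Proposition \ref{prop13} to replace $\omega(R(U_F^+))$ by $\omega_{X_{can}}(R(U_F^+))$, and then to compute the latter by a generating-function argument. First I would describe the spheres using Lemma \ref{lemma13}: for $n\ge 1$ the set $S_{X_{can}}(n)$ consists exactly of the words $\gamma=w^{k_1}_{\eps_1}\cdots w^{k_m}_{\eps_m}$ with $m\ge 1$, $k_1,\dots,k_m\ge 1$, $k_1+\cdots+k_m=n$, where $\eps_1\in\{+1,-1\}$ is free and the remaining $\eps_i$ are forced by $s(w^{k_i}_{\eps_i})=t(w^{k_{i+1}}_{\eps_{i+1}})$. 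Since $d$ is multiplicative and $d(w^k_\eps)=[k+1]_q$ is independent of $\eps$ (equation \eqref{eq20}), one has $d(\gamma)^2=\prod_{i=1}^m[k_i+1]_q^2$, whence for $n\ge 1$
\[
|S_{X_{can}}(n)|=2\sum_{m\ge 1}\ \sum_{\substack{k_1,\dots,k_m\ge 1\\ k_1+\cdots+k_m=n}}\prod_{i=1}^m[k_i+1]_q^2,
\]
while $|S_{X_{can}}(0)|=1$.

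Setting $A(z)=\sum_{k\ge 1}[k+1]_q^2z^k$, the convolution structure above gives the sphere generating function
\[
f_{X_{can}}(z)=\sum_{n\ge 0}|S_{X_{can}}(n)|z^n=1+2\sum_{m\ge 1}A(z)^m=\frac{1+A(z)}{1-A(z)}.
\]
Summing geometric series (for $q<1$; the case $q=1$ is done identically using $A(z)=\tfrac{1+z}{(1-z)^3}-1$) yields
\[
A(z)=\frac{1}{(q^{-1}-q)^2}\Bigl(\frac{q^{-4}z}{1-q^{-2}z}-\frac{2z}{1-z}+\frac{q^4z}{1-q^2z}\Bigr).
\]
As $A$ has non-negative coefficients and its smallest positive pole is $z=q^2$, it increases strictly from $0$ to $+\infty$ on $(0,q^2)$, so there is a unique $z_0\in(0,q^2)$ with $A(z_0)=1$. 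For $|z|<z_0$ one has $|A(z)|\le A(|z|)<1$, so $f_{X_{can}}$ is analytic there, while at $z_0$ the numerator equals $2\ne 0$ and $f_{X_{can}}$ has a pole; hence the radius of convergence of $f_{X_{can}}$ is exactly $z_0$. Using Lemma \ref{lemma2}, the Cauchy--Hadamard formula and Lemma \ref{lemma3}, together with $z_0<q^2\le 1$, this gives
\[
\omega_{X_{can}}(R(U_F^+))=\max\Bigl(1,\lim_{n\to\infty}|S_{X_{can}}(n)|^{1/n}\Bigr)=\frac{1}{z_0}>q^{-2}>1.
\]

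It remains to identify $1/z_0$ as the largest root of $P_q$. Writing $r=1/z$ and clearing denominators in $A(z)=1$ (multiplying by $(q^{-1}-q)^2$ and by $(r-q^{-2})(r-1)(r-q^2)$), a direct computation shows that this equation is equivalent to $P_q(r)=0$, with $P_q$ as in \eqref{eq22}; none of $q^{-2},1,q^2$ is a root of $P_q$, so no spurious roots are introduced. Thus the three roots of $P_q$ are the reciprocals of the three solutions of $A(z)=1$. If some root $r'$ of $P_q$ satisfied $|r'|\ge 1/z_0$, then $z'=1/r'$ would satisfy $|z'|\le z_0<q^2$, forcing $1=A(z')\le A(|z'|)\le A(z_0)=1$ and hence $|z'|=z_0$, i.e.\ $|r'|=1/z_0$; so $r_0:=1/z_0$ is a root of maximal modulus, and it is real. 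Since $P_q(1)=-2<0<\lim_{x\to\infty}P_q(x)$, the largest real root exceeds $1$, and the strict monotonicity of $A$ on $(0,q^2)$ excludes any real root of $P_q$ in $(r_0,\infty)$; therefore $r_0=r_q$ is simultaneously the largest real root and the root of largest absolute value of $P_q$. Combining with Proposition \ref{prop13} yields $\omega(R(U_F^+))=r_q>1$, and uniform exponential growth follows by definition.

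The main obstacle is the last paragraph: establishing the algebraic equivalence $A(z)=1\Leftrightarrow P_q(r)=0$ requires somewhat delicate bookkeeping of the partial-fraction terms, and — more conceptually — one must justify that the dominant singularity of $f_{X_{can}}$ sits at a real positive point (via the bound $|A(z)|\le A(|z|)$, a surrogate for Pringsheim's theorem) in order to match the radius of convergence $z_0$ with the largest-modulus, and hence largest real, root of $P_q$.
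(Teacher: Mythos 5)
Your proof is correct, and it reaches the paper's polynomial $P_q$ by a route that is genuinely more self-contained than the paper's. The two arguments begin identically: Proposition \ref{prop13} reduces everything to $\omega_{X_{can}}$, and Lemma \ref{lemma13} together with \eqref{eq20} gives the sphere sizes as sums of products $[k_1+1]_q^2\cdots[k_m+1]_q^2$ over compositions of $n$. From there the paper transports the problem to the free product $SU_q(2)\wh{\star}SU_q(2)$ via an explicit length- and dimension-preserving bijection $\Phi$, quotes the Banica--Vergnioux free-product formula $\mc{P}_{\GG_1\wh{\star}\GG_2}=\mc{P}_{\GG_1}+\mc{P}_{\GG_2}$ from \cite{BanicaVergnioux} to obtain the closed form of the sphere generating function, and finally invokes Pringsheim's theorem to identify the largest-modulus root of $P_q$ with its largest real root $r_q$. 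You bypass both citations: your convolution identity $f_{X_{can}}=\frac{1+A}{1-A}$ is exactly the content of the free-product formula in this special case (indeed $\mc{S}_{SU_q(2)}=1+A$, so $\mc{P}_{SU_q(2)}=\frac{A}{1+A}$ and $\frac{1}{1-2\mc{P}_{SU_q(2)}}=\frac{1+A}{1-A}$), but you prove it directly from the word structure of $\Irr(U_F^+)$; and your positivity argument ($|A(z)|\le A(|z|)$ plus strict monotonicity of $A$ on $(0,q^2)$) is a hands-on substitute for Pringsheim, which simultaneously pins the radius of convergence of $f_{X_{can}}$ at the unique $z_0\in(0,q^2)$ with $A(z_0)=1$ and shows that $1/z_0$ is at once the largest-modulus and the largest real root of $P_q$. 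Your algebra checks out: clearing denominators in $A(z)=1$ gives $1-(2q^{-2}+3+2q^2)z+2(q^{-2}+1+q^2)z^2-2z^3=0$, i.e.\ $P_q(1/z)=0$, and the excluded points are harmless since $P_q(1)=-2$ and $P_q(q^{\pm2})=-q^{\pm6}-q^{\pm4}$ are nonzero. What the paper's route buys is brevity (given the cited results) and the structural insight that $(R(U_F^+),X_{can})$ has the same length--dimension data as the fusion algebra of $\wh{SU_q(2)}\star\wh{SU_q(2)}$; what yours buys is an elementary, self-contained proof needing only the fusion rules, partial fractions, and basic power-series facts. One cosmetic slip: in your concluding display the chain $\frac{1}{z_0}>q^{-2}>1$ should read $\frac{1}{z_0}>q^{-2}\ge 1$ so as to cover $q=1$; the conclusion $\omega_{X_{can}}>1$ is unaffected.
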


\begin{proof}
By Proposition \ref{prop13} it suffices to calculate $\omega_{X_{can}}(R(U_F^+))$. Recall that the set $\Irr(U_F^+)$ consists of the trivial representation $e$ and words of the form $w^{n_1}_{\eps_1}\cdots w^{n_m}_{\eps_m}$, where $m,n_1,\dotsc,n_m$ $\ge 1$, $\eps_1\in \{+1,-1\}$ and $\eps_i(i\ge 2)$ are chosen so that $s(w^{n_i}_{\eps_i})=t(w^{n_{i+1}}_{\eps_{i+1}})\,(1\le i \le m-1)$. Further recall from \eqref{eq20} and Lemma \ref{lemma13} that the length (with respect to $X_{can}=\{\alpha,\ov\alpha\}$) and the quantum dimension of the irreducibles as above are given by 
\begin{equation}\label{eq23}
\begin{split}
\ell_{X_{can}}(e)=0,&\quad d(e)=1,\\
\ell_{X_{can}}(w^{n_1}_{\eps_1}\cdots w^{n_m}_{\eps_m}) =n_1+\cdots+n_m,&\quad 
d(w^{n_1}_{\eps_1}\cdots w^{n_m}_{\eps_m}) =
[n_1+1]_q\cdots [n_m+1]_q.
\end{split}\end{equation}

Next, consider the compact quantum group $SU_q(2)\wh{\star} SU_q(2)$ (dual to the free product of discrete quantum groups $\wh{SU_q(2)}\star \wh{SU_q(2)}$) and recall that $\Irr(SU_q(2)\wh{\star}SU_q(2))$ consists of the trivial representation $e$ and words of the form $v^{n_1}_{\eps} v^{n_2}_{-\eps}\cdots v^{n_m}_{\pm \eps}$, where $m,n_1,\dotsc,n_m\ge 1$, $\eps\in \{+1,-1\}$, the sign $\pm$ depends on the parity of $m$ and $v_{+1}^n$ (resp.~$v_{-1}^n$) corresponds to the $n$'th irreducible representation of the first (resp.~second) copy of $SU_q(2)$ in $SU_q(2)\wh{\star}SU_q(2)$ (see \cite{Wangfree}). Let $Y=\{v^1_{+1},v^1_{-1}\}$ be a generating set of $R(SU_q(2)\wh{\star}SU_q(2))$. Then we easily see that
\begin{equation}\label{eq24}
\begin{split}
\ell_{Y}(e)=0,&\quad d(e)=1,\\
\ell_{Y}(v^{n_1}_{\eps} v^{n_2}_{-\eps}\cdots v^{n_m}_{\pm \eps}) =n_1+\cdots+n_m,&\quad 
d(v^{n_1}_{\eps} v^{n_2}_{-\eps}\cdots v^{n_m}_{\pm \eps}) =
[n_1+1]_q\cdots [n_m+1]_q.
\end{split}\end{equation}
From \eqref{eq23} and \eqref{eq24} we conclude that the map $\Phi\colon \Irr(U_F^+)\rightarrow \Irr(SU_q(2)\wh{\star}SU_q(2))$ given by 
\begin{equation}\label{eq25}
\Phi(e) =e, \;\;\;
\Phi(w^{n_1}_{\eps_1}\cdots w^{n_m}_{\eps_m})=
v^{n_1}_{\eps_1} v^{n_2}_{-\eps_1}\cdots v^{n_m}_{\pm \eps_1}
\end{equation}
is a bijection which preserves quantum dimensions and lengths with respect to $X_{can}$ and $Y$. Consequently
\[
\omega_{X_{can}}(R(U_F^+))=
\omega_Y(R(SU_q(2)\wh{\star}SU_q(2) )).
\]
Following \cite{BanicaVergnioux}, let us define functions $\mc{S}_{\GG}(z)=\sum_{n=0}^{\infty} 
\sum_{\ell_X(v)=n} d(v)^2 z^n$ and $\mc{P}_{\GG}(z)=1-\tfrac{1}{\mc{S}_{\GG}(z)}$. These functions depend naturally on the choice of generating set $X$, and we will use them below for $SU_q(2)$ with $X=\{1\}$ and for $SU_q(2)\wh{\star}SU_q(2)$ with $X=Y$. Then  \cite[Theorem 3.2]{BanicaVergnioux} implies that
\[
\mc{P}_{SU_q(2)\wh{\star}SU_q(2)}(z)=2 \mc{P}_{SU_q(2)}(z).
\]
The function $\mc{P}_{SU_q(2)}$ is easy to calculate (c.f.~\cite[Theorem 4.1]{BanicaVergnioux}):
\[
\mc{S}_{SU_q(2)}(z)=\sum_{n=0}^{\infty} [n+1]_q^2 z^n=
\tfrac{1+z}{(1-q^{-2}z )(1-q^2 z) (1-z)}.
\]
Hence
\[
\mc{P}_{SU_q(2)}(z)=
\tfrac{1+z -(1-q^{-2}z)(1-q^2 z)(1-z) }{1+z}.
\]
It follows that
\[
\mc{S}_{SU_q(2)\wh{\star}SU_q(2)}(z)=
\tfrac{1}{1-\mc{P}_{SU_q(2)\wh{\star}SU_q(2)}(z)}=
\tfrac{1}{1-2 \mc{P}_{SU_q(2)}(z)}=\tfrac{1+z}{
1- (2q^{-2} +3+2q^2)z +2(q^{-2}+1+q^2)z^2 -2z^3}.
\]
and
\begin{equation}\label{eq26}
\mc{S}_{SU_q(2)\wh{\star}SU_q(2)}(\tfrac{1}{z})=
\sum_{n=-\infty}^{0}
|S_Y(-n)| z^n=
\tfrac{z^2(1+z)}{
z^3- (2q^{-2} +3+2q^2)z^2 +2(q^{-2}+1+q^2)z -2}.
\end{equation}

From the convergence criterion for Laurent series, we see that $\omega_{Y}(R(SU_q(2)\wh{\star}SU_q(2)))=\lim_{n\to\infty} |S_Y(n)|^{1/n}$ is equal to the largest absolute value of a root of the polynomial $P_q$
(c.f.~\eqref{eq22}). Finally Pringsheim's theorem \cite[Theorem IV.3, p.197]{AnalyticCombinatorics} implies that this value is equal to $r_q$, which was defined to be the largest real root of $P_q$.

\end{proof}

\begin{remark}
	The proof above  follows the logic of \cite{BanicaVergnioux}, but avoids the usage of the notion of the \emph{free version}. One could also provide the proof using a monoidal equivalence result \cite[Corollary 6.3]{BichonDeRijdtVaes} and then \cite[Th{\'e}or{\`e}me 1]{BanicaUnitary} and \cite[Theorem 3.3]{BanicaVergnioux}. In particular when $F=\I\in GL_N(\CC)$, the calculation of $\omega_{X_{can}}(R(U_N^+))$ recovers the calculation of the exponential growth rate for $A_u(N)$ in \cite{BanicaVergnioux}. 
\end{remark}

Although the last theorem computes the uniform exponential growth rate for $R(U_F^+)$, the form of the answer is not very explicit. Below we present an alternative approach leading to an estimate for the value of the root $r_q$ appearing above, and in particular recovering the asymptotic behaviour as $q$ tends to $0^+$ (or, in other words, when the quantum dimension of the canonical fundamental representation of $U_F^+$ goes to infinity). For unitary $F\in GL_2(\CC)$ we have $q=1$ and the root $r_1$ can be computed explicitly (giving $r_1=6.065\dotsc$); hence below we exclude this case.

\begin{proposition}\label{prop16}
	Let $F\in GL_N(\CC)\,(N\ge 2)$ be such that $\Tr(F^*F)=\Tr( (F^*F)^{-1})>2$ and define $0<q< 1$ via $q+q^{-1}=\Tr(F^*F)$.
	We have
	\[
	2q^{-2}+2 +q^2\le 
	\omega(R(U_F^+)) \le
2q^{-2}+2+q^2\, \tfrac{3-2q^2}{(1-q^2)^2}.
	\]
	In particular $\omega(R(U_F^+)) = 2 q^{-2}+ 2+\mc{O}(q^2)$ as $q\to 0^+$.
\end{proposition}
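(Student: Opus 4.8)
The plan is to invoke Theorem~\ref{thm2}, which identifies $\omega(R(U_F^+)) = r_q$ as the largest real root of the cubic $P_q$, and to note that the hypothesis $\Tr(F^*F) > 2$ forces $q \in (0,1)$. Write $L = 2q^{-2}+2+q^2$ and $U = 2q^{-2}+2+q^2\tfrac{3-2q^2}{(1-q^2)^2}$ for the two bounds, and observe that both equal $2q^{-2}+2+\mathcal{O}(q^2)$; indeed
\[
U-L = q^2\tfrac{3-2q^2-(1-q^2)^2}{(1-q^2)^2}=q^2\tfrac{2-q^4}{(1-q^2)^2}=\mathcal{O}(q^2),
\]
so once $L \le r_q \le U$ is established, the asymptotic statement $\omega(R(U_F^+))=2q^{-2}+2+\mathcal{O}(q^2)$ follows at once, while $r_q>1$ is already recorded in Theorem~\ref{thm2}.

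For the lower bound the key point is that $P_q$ has positive leading coefficient and tends to $+\infty$, so it suffices to check $P_q(L) \le 0$: the intermediate value theorem then produces a root of $P_q$ in $[L,\infty)$, and $r_q$, being the largest root, satisfies $r_q \ge L$. The computation is short once one writes $P_q(z)=z^3-az^2+bz-2$ with $a=2q^{-2}+3+2q^2$ and $b=2(q^{-2}+1+q^2)$, so that $L=a-(1+q^2)$ and hence $P_q(L)=L^2(L-a)+bL-2=-(1+q^2)L^2+bL-2$. Expanding this gives
\[
P_q(L)=-4q^{-2}-8-6q^2-3q^4-q^6,
\]
which is manifestly negative for every $q>0$; thus $r_q \ge L$ for all $q\in(0,1)$.

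For the upper bound I would show $P_q(z)>0$ for all $z\ge U$, which forces $r_q\le U$. Since $P_q$ is a cubic with positive leading coefficient, it suffices to verify the chain $P_q''(U)\ge 0$, $P_q'(U)\ge 0$, $P_q(U)\ge 0$: the first makes $P_q''$ (which is increasing) nonnegative on $[U,\infty)$, so $P_q'$ is increasing there and, being $\ge 0$ at $U$, stays nonnegative; hence $P_q$ is increasing on $[U,\infty)$ and, being $\ge 0$ at $U$, is positive beyond $U$. The inequality $P_q''(U)\ge 0$ reduces to $U\ge\tfrac13(2q^{-2}+3+2q^2)$, which is immediate from $U\ge 2q^{-2}+2$. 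The remaining content is $P_q'(U)\ge 0$ and $P_q(U)\ge 0$; here it is convenient to record the closed form, with $t=q^2$,
\[
U=\frac{2-2t+t^2}{t(1-t)^2},
\]
so that clearing denominators turns each inequality into the positivity on $(0,1)$ of an explicit polynomial in $t$. Concretely, with $D=t(1-t)^2$ and $n=2-2t+t^2$ one has $P_q(U)\ge 0\iff D^3P_q(U)\ge 0$, where $D^3P_q(U)=n^3-aDn^2+bD^2n-2D^3$ is a polynomial in $t$; this positivity check is routine but unenlightening.

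The main obstacle is precisely this verification that $P_q(U)\ge 0$: the leading orders cancel, since both $U^2(U-a)$ and $bU$ contribute $4q^{-4}+\cdots$, so one must track the $q^{-4}$ and $q^{-2}$ orders carefully. This also explains why the naive estimate $r_q\le a+2/r_q^2$ is too weak — it yields only $2q^{-2}+3+\mathcal{O}(q^2)$ — and why the sharper sandwich requires retaining the correction $-b/r_q$ in the fixed-point identity $r_q=a-b/r_q+2/r_q^2$ (equivalently $P_q(z)=-z^2(g(z)-z)$ with $g(z)=a-b/z+2/z^2$). Once $L\le r_q\le U$ is in hand, the fact that both bounds are $2q^{-2}+2+\mathcal{O}(q^2)$ completes the proof.
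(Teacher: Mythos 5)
Your argument is correct in outline but follows a genuinely different route from the paper's. The paper never returns to the cubic of Theorem \ref{thm2}: it reduces to the canonical generating set via Proposition \ref{prop13} and then estimates sphere sizes directly, sandwiching the quantum dimension of each word in $S_{X_{can}}(n)$ built from $m$ blocks between $(1-q^4)^m\,q^{-n-m}(q^{-1}-q)^{-m}$ and $q^{-n-m}(q^{-1}-q)^{-m}$, summing over the compositions of $n$ into $m$ parts, and taking $n$-th roots; the two constants $2q^{-2}+2+q^2$ and $2q^{-2}+2+q^2\tfrac{3-2q^2}{(1-q^2)^2}$ then appear automatically as the growth rates of the resulting binomial sums. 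Your plan --- quote $\omega(R(U_F^+))=r_q$ from Theorem \ref{thm2} and trap the largest root of $P_q$ between the two bounds by sign checks --- is precisely the ``straightforward, but rather tedious computation based on Theorem \ref{thm2}'' that the authors mention after the proof and deliberately avoid. What the paper's method buys is that the bounds emerge naturally rather than having to be guessed and verified a posteriori; what yours buys is that no further combinatorics is needed once Theorem \ref{thm2} is in hand.

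On completeness: your lower bound is fully proved (writing $P_q(z)=z^3-az^2+bz-2$, the identity $P_q(L)=-(1+q^2)L^2+bL-2=-4q^{-2}-8-6q^2-3q^4-q^6<0$ is correct, and the intermediate value theorem plus maximality of $r_q$ gives $r_q\ge L$), and your three-step scheme for the upper bound ($P_q''(U)\ge0$, $P_q'(U)\ge0$, $P_q(U)\ge0$ force $P_q>0$ beyond $U$) is sound. However, the two substantive inequalities are only asserted to be ``routine'', and since the cancellation of the leading orders happens exactly there, they are the real content of the upper bound; as written, the proof is incomplete at this point. The checks do succeed: with $t=q^2$, $n=2-2t+t^2$, $D=t(1-t)^2$, direct expansion gives
\begin{align*}
D^3P_q(U)&=t^2\bigl(4(1-t)+t^2(t-2)(t^2-3t+1)\bigr),\\
D^2P_q'(U)&=(4-10t+7t^2)+t^2\bigl((1-t)(11+3t-6t^2)+10t^3(1-t)+2t^5\bigr),
\end{align*}
and both right-hand sides are strictly positive on $(0,1)$: in the second, every summand is positive (the quadratic $4-10t+7t^2$ has negative discriminant, and $11+3t-6t^2\ge 8$ there); in the first, $(t-2)(t^2-3t+1)\ge 0$ when $(3-\sqrt5)/2\le t<1$, while for $0<t<(3-\sqrt5)/2$ one has $t^2\bigl|(t-2)(t^2-3t+1)\bigr|\le 2t^2<4(1-t)$. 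Adding these two displays (or any equivalent verification) turns your proposal into a complete proof.
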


\begin{proof}
	Due to Proposition \ref{prop13}, we need to bound $\omega_{X_{can}}(R(U_F^+))$. Take $n\in\NN$ and any element $\gamma\in S_{X_{can}}(n)$. By Lemma \ref{lemma13} we can write $\gamma=w^{k_1}_{\eps_1}\cdots w^{k_m}_{\eps_m}$ with $m,k_1,\dotsc,k_m\ge 1,\eps_1\in\{+1,-1\}$, $s(w^{k_i}_{\eps_i})=t(w^{k_{i+1}}_{\eps_{i+1}})$ and $k_1+\cdots+k_m=n$. The quantum dimension of $\gamma$ equals $d(\gamma)=[k_1+1]_q\cdots [k_m+1]_q
	$. Hence 
	\[\begin{split}
&\quad\;
|S_{X_{can}}(n)|=|\{w^{k_1}_{\eps_1}\cdots w^{k_m}_{\eps_m}\mid 
			\eps_1\in \{+1,-1\},k_1+\cdots +k_m = n\}|\\
			&=
			\sum_{\eps_1\in \{+1,-1\}}\sum_{m=1}^{n} \sum_{\overset{k_1,\dotsc,k_m\ge 1,}{k_1+\cdots+k_m=n}} d(w^{k_1}_{\eps_1} \cdots w^{k_m}_{\eps_m})^2=
			2 \sum_{m=1}^{n} \sum_{\overset{k_1,\dotsc,k_m\ge 1,}{k_1+\cdots+k_m=n}} [k_1+1]_q^2 \cdots [k_m+1]_q^2.
	\end{split}\]
	Observe that 
	\begin{equation}\label{eq30}
		(1-q^4)^m 
		\tfrac{ q^{-(k_1+\cdots +k_m)-m}}{(q^{-1}-q)^{m}}\le 
		d(\gamma) \le \tfrac{ q^{-(k_1+\cdots +k_m)-m}}{(q^{-1}-q)^{m}}.
	\end{equation}
	Indeed, the upper bound holds as $[k_i+1]_q=\tfrac{q^{-k_i-1}-q^{k_i+1}}{q^{-1}-q}\le \tfrac{q^{-k_i-1}}{q^{-1}-q}$ and the lower follows from the string of equivalences 
	\[
	[k_i+1]_q = 
	\tfrac{q^{-k_i-1}-q^{k_i+1}}{q^{-1}-q} \ge 
	(1-q^4) 
	\tfrac{q^{-k_i-1}}{q^{-1}-q}\;\Leftrightarrow\; 
	q^4 q^{-k_i-1} \ge q^{k_i+1}\;\Leftrightarrow\;
	q^4 \ge q^{2k_i+2};
	\]
the last inequality holds, as $k_i\ge 1$. 
	
	Using \eqref{eq30}, we first establish an upper bound on $|S_{X_{can}}(n)|$:
	\[\begin{split}
		|S_{X_{can}}(n)| &\le 
		2 \sum_{m=1}^{n} \sum_{\overset{k_1,\dotsc,k_m\ge 1,}{k_1+\cdots+k_m=n}} 
		\tfrac{ q^{-2(k_1+\cdots +k_m)-2m}}{(q^{-1}-q)^{2m}}=
		2 q^{-2n} \sum_{m=1}^{n} 
		\tfrac{ q^{-2m}}{(q^{-1}-q)^{2m}}
		\sum_{\overset{k_1,\dotsc,k_m\ge 1,}{k_1+\cdots+k_m=n}} 
		1\\
		&=
		2 q^{-2n} \sum_{m=1}^{n} 
		\tfrac{ q^{-2m}}{(q^{-1}-q)^{2m}}
		{n-1\choose m-1}=
		2 q^{-2n} 
		\tfrac{ q^{-2}}{(q^{-1}-q)^{2}}
		\sum_{m=0}^{n-1} 
		\tfrac{ q^{-2m}}{(q^{-1}-q)^{2m}}
		{n-1\choose m}\\
		&=
		2 q^{-2n} 
		\tfrac{ q^{-2}}{(q^{-1}-q)^{2}}
		\bigl( 1+ (\tfrac{q^{-1}}{q^{-1}-q})^{2} \bigr)^{n-1}=
		2 q^{-2n} 
		\tfrac{ q^{-2}}{(q^{-1}-q)^{2}}
		\bigl( 1+ (\tfrac{1}{1-q^2})^{2} \bigr)^{n-1}.
	\end{split}\]
Since $q^{-2} \bigl( 1+\tfrac{1}{(1-q^2)^2}\bigr)=2q^{-2}+2+q^2\, \tfrac{3-2q^2}{(1-q^2)^2}$,
the stated upper bound on $\omega_{X_{can}}(R(U_F^+))$ easily follows. A similar computation gives the lower bound:
	\[\begin{split}
		|S_{X_{can}}(n)| &\ge 
		2 \sum_{m=1}^{n} \sum_{\overset{k_1,\dotsc,k_m\ge 1,}{k_1+\cdots+k_m=n}} 
		(1-q^4)^{2m}
		\tfrac{ q^{-2(k_1+\cdots +k_m)-2m}}{(q^{-1}-q)^{2m}}\\
		&=
		2 q^{-2n} \sum_{m=1}^{n} 
		(1-q^4)^{2m}
		\tfrac{ q^{-2m}}{(q^{-1}-q)^{2m}}
		{n-1\choose m-1}\\
		&=
		2 q^{-2n}
		(1-q^4)^{2}
		\tfrac{ q^{-2}}{(q^{-1}-q)^{2}}
		\sum_{m=0}^{n-1} 
		(1-q^4)^{2m}
		\tfrac{ q^{-2m}}{(q^{-1}-q)^{2m}}
		{n-1\choose m}\\
		&=
		2 q^{-2n}
		(1-q^4)^{2}
		\tfrac{ q^{-2}}{(q^{-1}-q)^{2}}
		\bigl( 1 + 
		\tfrac{ (1-q^4)^{2}}{(1-q^{2})^{2}}
		\bigr)^{n-1}\\
		&=
		2 q^{-2n}
		(1-q^4)^{2}
		\tfrac{ q^{-2}}{(q^{-1}-q)^{2}}
		\bigl( 2+2q^2 +q^4 \bigr)^{n-1}.
	\end{split}\]
The stated asymptotics follows now easily (both the upper and lower estimate are of the form $2q^{-2} +2 + \mc{O}(q^2)$).
\end{proof}

Note that the asymptotics of $r_q$ could be also deduced from a straightforward, but rather tedious computation based on Theorem \ref{thm2}.

\begin{remark}
	Propositions \ref{prop7}, \ref{prop16} (and their proofs) show qualitatively different behaviour between the representation theory of quantum groups $U_F^+$ and $G_q$. Loosely speaking, while in the case of $G_q$ asymptotically  the size of the sphere can be attributed to a single irreducible representation, 
dealing with $U_F^+$ we have to consider the contribution of all the representations belonging to the sphere.
\end{remark}

 \smallskip

\noindent {\bf Acknowledgments.}
J.K.\ was partially supported by FWO grant 1246624N. A.S.\ was  partially supported by the National Science Center (NCN) grant no. 2020/39/I/ST1/01566, and by a grant from the Simons Foundation. We would like to thank the Isaac Newton Institute for Mathematical Sciences, Cambridge, for support and hospitality during the programme Quantum Information, Quantum Groups and Operator Algebras, supported by EPSRC grant no. EP/R014604/1. We acknowledge a discussion with Makoto Yamashita; in particular we are grateful for the explanation regarding Proposition \ref{prop8}. Furthermore, J.K.~is grateful to Marco Matassa for a discussion concerning representation theory. Finally we thank the anonymous referee for useful comments.

\bibliographystyle{amsalpha}
\bibliography{bibliografia}

\end{document}